\documentclass[12pt]{amsart}
\usepackage{amsmath, amsthm, amsfonts, amssymb, graphicx, hyperref, bm,verbatim,mathrsfs,siunitx,natbib,tikz-cd,mathabx,mathtools,dsfont}
\usepackage{stmaryrd,enumerate}
\theoremstyle{plain}
\usepackage{geometry}
\newtheorem{thm}{Theorem}[section]
\newtheorem{lem}[thm]{Lemma}
\newtheorem{prop}[thm]{Proposition}
\newtheorem{cor}[thm]{Corollary}

\newtheorem{conj}{Conjecture}
\newtheorem*{que}{Question}

\numberwithin{sublem}{thm} 
\numberwithin{equation}{section}
\renewcommand{\S}{S(\mathcal{A}, \mathcal{P})}

\renewcommand*{\d}{\, \textup{d}}
\renewcommand{\gcd}{\operatorname{gcd}}

\newcommand{\e}{\varepsilon}
\newcommand{\Mod}[1]{\ (\mathrm{mod}\ #1)}

\numberwithin{equation}{section}
\makeatletter
\@namedef{subjclassname@2020}{%
  \textup{2020} Mathematics Subject Classification}
\makeatother
\usepackage[T1]{fontenc}
\numberwithin{equation}{section}
\subjclass[2020]{Primary 11N56; Secondary 11N36}
\keywords{}
\let\underbrace\LaTeXunderbrace

\begin{document}
\title{On the Number of Prime Factors of Consecutive Integers}
\author{Cheuk Fung (Joshua) Lau}
\address{Mathematical Institute, University of Oxford, Radcliffe Observatory Quarter, Woodstock Rd, Oxford OX2 6GG, UK}
\email{joshua.cf.lau@gmail.com}
\begin{abstract}
We prove that there are infinitely many $n$ such that $\omega(n+k) \ll \log k$ for all integers $k \ge 2$. This improves on a result of \citet{tao_quantitative_2025}, who has $O(k)$ in place of $O(\log k)$. As corollaries, we make progress on a number of questions posed by Erd\H{o}s. The proof is based on a quantitative refinement of the Tao-Ter\"{a}v\"{a}inen probabilistic argument, combining a more efficient sieve procedure with stronger exponential concentration-of-measure estimates. Moreover, we formulate a conjecture on integers with many prime factors based on Cram\'{e}r-type random models. Assuming this conjecture, the main bound is essentially sharp.
\end{abstract}
\maketitle
\tableofcontents

\section{Introduction}
In this paper we are interested in when there are strings of consecutive integers each having `few' prime factors. More specifically, we study variants of the following question.

\begin{que}
Given a function $f:\mathbb{N} \to \mathbb{N}$, are there infinitely many integers $n$ such that $\omega(n+k)\le f(k)$ for all integers $k \ge 1$?
\end{que}
Here $\omega(n)=\sum_{p\mid n}1$ is the number of distinct prime factors of $n$, and one can ask similar questions for the closely related functions $\Omega(n):=\sum_{p^j\mid n}1$ (the number of prime factors counting multiplicity) or $\tau(n):=\sum_{d\mid n}1$ (the number of divisors of $n$). One can also ask for $\omega(n-k)\le f(k)$ for all $1\le k<n$, which has essentially the same difficulty.

Erd\H{o}s asked several questions of this type, including (in increasing order of difficulty):

\begin{conj}[\citet{erdos1974remarks}, \citet{tao_quantitative_2025}]
    There are infinitely many $n$ such that $$\omega(n+k) \le \Omega(n+k) \ll k$$ for all integers $k\ge 1$.
\end{conj}

\begin{conj}[\citet{erdos1979some}] \label{conj:erdos413}
    There are infinitely many $n$ such that $$\omega(n-k) \le k$$ for all integers $1 \le k <n$.
\end{conj}

\begin{conj}[\citet{erdos1974remarks}] \label{conj:erdos826}
    There are infinitely many $n$ such that $\tau(n+k) \ll k$ for all integers $k \ge 1$.
\end{conj}

\begin{conj}[\citet{erdos1979some}] \label{conj:erdos679}
    For $\e>0$, there are infinitely many $n$ such that $$\omega(n-k) \le (1+\e) \frac{\log k}{\log \log k}$$ for all integers $1 \ll_\e  k <n$. 
    Moreover, there are infinitely many $n$ such that
    \[
    \Omega(n-k) \le (1+\e) \frac{\log k}{\log 2}
    \]
    for all integers $1 \ll_\e k <n$.
\end{conj}

These are listed as Erd\H{o}s Problem \#248, \#413, \#826 and \#679 respectively on the website \texttt{erdosproblems} maintained by Thomas Bloom cataloguing problems posed by Erd\H{o}s. Conjecture 1 was recently solved by \citet[Theorem 1.1]{tao_quantitative_2025}.

In this paper, we prove the following result, which is an improvement on Theorem 1.1 of \citet{tao_quantitative_2025}.
\begin{thm} \label{thm:mainthm}
  There exists a positive constant $C$ such that, for infinitely many positive integers $n$, one has $\omega(n+k) \le \Omega(n+k) \le C\log k$ for every integer $k \ge 2$.
\end{thm}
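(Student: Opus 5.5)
We will follow the probabilistic strategy of Tao--Ter\"{a}v\"{a}inen. Take a large parameter $x$ and a sieve level $w=w(x)$, and choose $n$ uniformly at random from a suitable set $\mathcal{N}\subset[x,2x]$. The aim is to show that, with positive probability, $\Omega(n+k)\le C\log k$ holds for every $2\le k\le 2x$. For $k>2x$ the condition is automatic: $n+k\le 2k$, so $\Omega(n+k)\le \log(2k)/\log 2\le C\log k$ once $C\ge 3$.

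\textbf{Step 1: sieve out small primes.} Fix residues $n\equiv a_p\Mod{p^{e_p}}$ for primes $p\le w$, combined by CRT, so that each $n+k$ with $k$ small has a controlled small-prime part. Concretely, choose $a_p$ so that $p^{j}\mid n+k$ forces $p^{j}\mid k$ or a nearby shift, e.g.\ $n\equiv -1\Mod{p^{e_p}}$ or a greedy choice. Then the $w$-smooth part of $n+k$ is essentially bounded by a bounded multiple of $k$, which contributes at most $O(\log k)$ to $\Omega(n+k)$. Here $e_p$ is taken large, about $\log x/\log p$ scaled down so that the modulus stays $\le x^{1/2}$. This is the ``more efficient sieve'': the number of prime factors below $w$ of $n+k$ is now deterministic and $O(\log k)$, instead of the $O(k)$ losses one gets from cruder choices.

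\textbf{Step 2: large primes via concentration.} For primes $p>w$ and randomly chosen $n$ in the residue class, the count $\Omega_{>w}(n+k)$ is a sum of nearly independent indicators with mean $\sum_{w<p\le x}1/p\approx\log(\log x/\log w)$. We pick $w$ so that this mean is $O(1)$, e.g.\ $w=x^{c}$, which is affordable because the modulus from Step 1 only uses primes up to a smaller threshold. Indeed, when $w=x^{c}$ one has $\Omega_{>w}(n+k)\le 1/c$ deterministically. The remaining issue is the primes between the CRT threshold $y$ and $w$. For those we need an exponential moment bound
\[
\mathbb{P}\bigl(\Omega_{(y,w]}(n+k)\ge t\bigr)\ll e^{-ct}
\]
uniformly in $k$. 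We would prove it by expanding $\mathbb{E}\,2^{\Omega_{(y,w]}(n+k)}$ (or $\lambda^{\Omega}$) as a divisor sum and applying a fundamental-lemma sieve / Shiu-type bound, valid since the relevant divisors are at most $x^{1/2}$ on the support.

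\textbf{Step 3: union bound.} Summing over $k$, we have $\sum_{k\le 2x}\mathbb{P}(\Omega(n+k)>C\log k)\ll\sum_k k^{-cC}<1/2$ for $C$ large, which handles all $k\ge k_0$. The finitely many small $k$ are handled by Step 1 directly, choosing $a_p$ so that $n+k$ is $w$-rough apart from $k$.

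\textbf{Main obstacle.} The hardest step is the uniform exponential tail in Step 2 for huge $k$ (up to $x$) with $n$ restricted to a CRT class of large modulus. We would first try a Rankin-type bound $\mathbb{E}\,\lambda^{\Omega_{(y,w]}(n+k)}\ll(\log w/\log y)^{\lambda-1}$ with $\lambda>1$ fixed, balancing $y$ and $w$ so that this factor is $O(1)$. The prime powers $p^2\mid n+k$ with $p>y$ are controlled by the trivial count $\sum_{p>y}x/p^2$.
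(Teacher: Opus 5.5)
There is a genuine gap in Step 2, and it occurs at the small shifts. Suppose $\mathbf{n}$ is uniform on a residue class modulo $\prod_{p\le y}p^{e_p}$. That modulus must be at most $x$, which forces $y\le(1+o(1))\log x$. For a fixed $k$, the count $\Omega_{(y,x^c]}(\mathbf{n}+k)$ then has mean $\sum_{y<p\le x^c}1/p=\log_2 x-O(\log_3 x)$. So the tail bound $\mathbb{P}(\Omega_{(y,w]}(\mathbf{n}+k)\ge t)\ll e^{-ct}$, uniform in $k$, is false, since it would force this mean to be $O(1)$. Concretely, for $k=2$ the event $\Omega(\mathbf{n}+2)\le C\log 2$ requires $\mathbf{n}+2$ to be an almost prime. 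Under your measure this has probability $(\log x)^{-1+o(1)}$, so the union bound cannot even start.

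Your remedy of balancing $y$ and $w$ so that $(\log w/\log y)^{\lambda-1}=O(1)$ needs $y\ge x^{c'}$, i.e.\ a CRT modulus of size about $e^{x^{c'}}$. For the same reason you cannot ``choose $a_p$ so that $n+k$ is $w$-rough'' for primes beyond $O(\log x)$. Nor are the bad shifts finitely many. The Rankin/Shiu bound you propose gives $\mathbb{P}(\Omega(\mathbf{n}+k)\ge C\log k)\ll(\log x)^{\lambda-1}k^{-C\log\lambda}$. For any fixed $C$ this is at most $k^{-2}$ only when $k\ge(\log x)^{\delta}$ for some $\delta=\delta(C,\lambda)>0$. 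So the range $2\le k<(\log x)^{\delta}$, which grows with $x$, is left uncovered, while $C$ must be independent of $x$.

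The missing ingredient is the one the paper, following Tao--Ter\"{a}v\"{a}inen, builds everything on: a sieve-weighted rather than uniform measure. The paper takes $n\equiv0\Mod{W}$, with $W=\prod_{p\le w}p^4$ and $w=0.15\log x$. It then weights by a product of Selberg/GPY sieve weights $\prod_{k\le K}\bigl(\sum_{d\mid n+k,\,(d,P(w))=1}\mu(d)\widetilde{\eta}(\log d/\log R_k)\bigr)^2$, with $K=(\log x)^{1/1000}$ and $R_k=x^{1/(100k^{50})}$. Since $\prod_k R_k^2$ is only a small power of $x$, the sieve expansion has acceptable error terms. Under this measure $\mathbf{n}+k$ behaves like an integer with no prime factors below $R_k$:
\begin{itemize}
\item Primes in $(w,R_k]$ are damped by the local factors $p^{-1}(1-p^{-(1+it)/\log R_k})(1-p^{-(1+it')/\log R_k})$, which gives the moment bound of Proposition~\ref{prop:mainpropaxioms}(C).
\item Primes in $(R_k,T]$ have total mean $\log(\log T/\log R_k)\ll\log k$.
\end{itemize}
Only for $K<k\le x^{1/100}$ does the paper take $R_k=w$, i.e.\ essentially your uniform measure. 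That is exactly the regime where your Rankin-type argument would succeed.

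Your Step 1 is fine in spirit; the paper handles the case $p^4\mid k$ by a separate geometric tail bound, Proposition~\ref{prop:mainpropaxioms}(E). Note, though, that the paper's ``more efficient sieve'' refers to these polynomially decaying levels $R_k$, not to the CRT step.
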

Using random models, we conjecture this is best possible up to a constant.
\begin{conj} \label{conj:mainthm}
    Theorem \ref{thm:mainthm} is best possible up to a constant, that is, for any $\e>0$ and $n$ sufficiently large in terms of $\e$, there exists integers $k_1,k_2 \ge 2$ such that $\omega(n+k_1) > (1-\e)\log k_1$ and $\Omega(n+k_2) > (1-\e) \log k_2/\log 2$.
\end{conj}

As a corollary of Theorem \ref{thm:mainthm}, we have a weaker version of Conjecture \ref{conj:erdos826} (Erd\H{o}s Problem \#826).
\begin{cor} \label{cor:826divisor}
    There is an absolute constant $C$ such that there are infinitely many $n$ satisfying $\tau(n+k) \ll k^{C}$ for all integers $k \ge 2$. 
\end{cor}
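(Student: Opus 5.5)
The corollary follows from Theorem \ref{thm:mainthm} together with the elementary inequality $\tau(m) \le 2^{\Omega(m)}$, valid for every positive integer $m$. To verify this inequality, write $m=\prod_{p} p^{a_p}$. Then
\[
\tau(m)=\prod_{p\mid m}(a_p+1)\le \prod_{p\mid m}2^{a_p}=2^{\Omega(m)},
\]
since $a+1\le 2^a$ for every integer $a\ge 1$.

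Let $C$ be the constant from Theorem \ref{thm:mainthm}, and let $n$ be one of the infinitely many integers it provides, so that $\Omega(n+k)\le C\log k$ for all $k\ge 2$. For every such $k$ we then have
\[
\tau(n+k)\le 2^{\Omega(n+k)}\le 2^{C\log k}=k^{C\log 2}.
\]
So the corollary holds with the absolute constant $C' = C\log 2$, and even with implied constant $1$ in the $\ll$.

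The only real input is Theorem \ref{thm:mainthm}, and specifically its bound on $\Omega$ rather than on $\omega$. A bound on $\omega$ alone would not control $\tau$, because repeated prime factors can make $\tau$ large even when $\omega$ is small. Apart from that, the argument is just the one-line inequality above, so there is no genuine obstacle.
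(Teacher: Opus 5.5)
Your proposal is correct and matches the paper's approach: the paper states the corollary as an immediate consequence of Theorem \ref{thm:mainthm} without writing out a proof, and the intended derivation is the one you give, via $\tau(m)\le 2^{\Omega(m)}$, which yields $\tau(n+k)\le k^{C\log 2}$. You are also right that the $\Omega$ bound, not just the $\omega$ bound, is what is needed here.
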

With the same proof, a version of Theorem \ref{thm:mainthm} for $n-k$ also holds.
\begin{thm} \label{thm:mainthm_minus}
  There exists a positive constant $C$ such that, for infinitely many positive integers $n$, one has $\omega(n-k) \le \Omega(n-k) \le C\log k$ for every integer $1< k <n$.
\end{thm}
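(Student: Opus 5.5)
We follow the same Tao--Teräväinen style probabilistic construction used for Theorem \ref{thm:mainthm}, now with the shifts taken in the negative direction. Fix a large parameter $x$ and look at $n$ in a dyadic range $[x,2x]$. Let $P(y)$ be the product of primes up to a slowly growing $y$. We first pin $n$ down to a residue class $n \equiv a \pmod{Q}$, with $Q=\prod_{p\le y}p^{e_p}$, chosen so that for every $k\le y$ the number $n-k$ has only a bounded power of each small prime $p\le y$. A natural choice is $a\equiv 0$ modulo $p^{e_p}$, so that for $k< p^{e_p}$ we get $v_p(n-k)=v_p(k)$. Then the contribution of the small primes to $\Omega(n-k)$ equals $\Omega(k)$ restricted to the small primes, which is $\le \log k/\log 2$. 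The same choice also controls the primes $p\le y$ for the larger shifts $k$, because $v_p(n-k)=v_p(k)$ whenever $v_p(k)<e_p$. The remaining shifts, with $v_p(k)\ge e_p$, form a sparse set, and we handle them inside the random model.

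For the large primes $p>y$ we model $n$ as uniform in the admissible class intersected with $[x,2x]$. We then bound the probability of the bad event
\[
\Omega_{>y}(n-k) > C\log k
\]
for each $1<k<n$. By a moment or exponential-moment computation, $\mathbb{E}\,e^{\lambda\Omega_{>y}(n-k)}$ is controlled by an Euler product $\prod_{y<p\le 2x}(1+O(e^{\lambda}/p))$. In the range $k\le x^{\delta}$ this follows from the fundamental lemma of sieve theory or from direct divisor-sum equidistribution in progressions. For $k$ near $n$, the quantity $n-k$ is a small number $m=n-k$, and trivially $\Omega(m)\le \log m/\log 2\le 2\log k$ once $m\le k^2$; the intermediate range needs the concentration estimate. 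Summing the failure probabilities $\ll k^{-2}$ over $k$ by a union bound gives total failure probability $<1$, so some $n$ exists for every $x$, and hence infinitely many $n$. This is literally the argument for Theorem \ref{thm:mainthm}, with $n+k$ replaced by $n-k$; the shifts $k$ with $k\ge n$ do not arise, and since $n-k$ is positive and $\le n$ the estimates are only easier.

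The main obstacle is the range of $k$ comparable to $x$. There the sieve level $Q$ and the exponential-moment bound must hold uniformly, with failure probability summable against the target $C\log k\asymp \log x$. We would reuse the stronger concentration-of-measure estimate from the proof of Theorem \ref{thm:mainthm}, which bounds the tail of $\Omega_{>y}(n-k)$ by $\exp(-c\,t)$ at level $t\asymp\log x$. For the remaining $k\in(x^{\delta},n-x^{\delta}]$, the bound $\omega(n-k)\le \Omega(n-k)\le \log(2x)/\log y$ coming from large primes is already $\ll\log k$ once combined with the residue-class control. So the whole proof reduces to transferring each step of the proof of Theorem \ref{thm:mainthm} with signs flipped.
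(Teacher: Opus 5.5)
You have the right skeleton: force $W\mid n$ so that tiny primes contribute at most $\Omega(k)$ plus a rare excess handled as in property (E), bound $\mathbb{P}(\Omega(\mathbf{n}-k)>C\log k)\ll k^{-2}$, and take a union bound. The paper does prove this theorem by rerunning the proof of Theorem \ref{thm:mainthm} with $n+k$ replaced by $n-k$. However, the measure you actually describe is not the one that proof uses, and it cannot work.

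Take $\mathbf{n}$ uniform on $\{n\in[x,2x]:Q\mid n\}$. The condition $Q\le x$ forces $y\le(1+o(1))\log x$. Then for every fixed $k$ one has $\mathbb{E}\,\omega_{>y}(\mathbf{n}-k)\ge\log\log x-\log\log y+O(1)\to\infty$. By Tur\'an--Kubilius, $\Omega(\mathbf{n}-2)>C\log 2$ with probability $1-o(1)$, so the union bound fails at its very first term. Your own Euler product says the same thing: $\prod_{y<p\le 2x}(1+O(e^{\lambda}/p))\asymp(\log x/\log y)^{O(e^{\lambda})}$. Markov then gives only $k^{-\lambda C}(\log x)^{O(e^{\lambda})}$, which is useless unless $\log k\gg\log\log x$.

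The missing idea is the Tao--Ter\"av\"ainen/Maynard weighting. Draw $n$ with density proportional to $\mathds{1}_{W\mid n}\prod_{k\le K}\bigl(\sum_{d\mid n-k,\,(d,P(w))=1}\mu(d)\widetilde{\eta}(\log d/\log R_k)\bigr)^2$, with $k$-dependent levels $R_k=x^{1/(100k^{50})}$ and $K=(\log x)^{1/1000}$. This weighting does the following:
\begin{itemize}
\item Under it, $\mathbf{n}-k$ behaves like an integer with no prime factors in $(w,R_k]$. Its expected number of prime factors in $(R_k,T]$ is therefore $\ll\log(\log x/\log R_k)\ll\log k$.
\item Convergence of $\sum_k k^{-50}$ keeps $W\prod_k R_k^2\,T^{s}\le x^{0.75}$, so the divisor-sum computations are legitimate.
\item You must then verify the analogues of (B)--(D) for $n-k$. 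The key one is (C), which controls primes in $(w,R_k]$ because the factors $1-p^{-(1+it)/\log R_k}$ are small when $\log p\ll\log R_k$.
\item Finally, convert these into the moment bounds of Lemma \ref{lem:moment_medium_large}.
\end{itemize}

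You have also put the difficulty in the wrong place. The range of $k$ comparable to $x$ needs no probabilistic input. For $x^{1/100}<k<n\le 2x$ one has deterministically $\Omega(n-k)\le\log(2x)/\log 2\le(200/\log 2)\log k$, which is even easier than the corresponding step for $n+k$. No concentration estimate at level $\asymp\log x$ is needed. The hard range is $2\le k\le(\log x)^{O(1)}$, where the target $C\log k$ does not grow with $x$, and that is exactly where the uniform model breaks.
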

We also speculate that the analogous version of Conjecture \ref{conj:mainthm} holds.
\begin{conj} \label{conj:newerdos679}
    Theorem \ref{thm:mainthm_minus} is best possible up to a constant, that is, for any $\e>0$ and $n$ sufficiently large in terms of $\e$, there exists integers $1<k_1,k_2 <n$ such that $\omega(n-k_1) > (1-\e)\log k_1$ and $\Omega(n-k_2) > (1-\e) \log k_2/\log 2$.
\end{conj}
Theorem \ref{thm:mainthm_minus} misses the first part of Conjecture \ref{conj:erdos679} (Erd\H{o}s Problem \#679) by a $\log \log k$ factor, and misses the last part by a constant. Moreover, Conjecture \ref{conj:newerdos679} disproves the first claim in Conjecture \ref{conj:erdos679} (Erd\H{o}s Problem \#679).

Using Theorem \ref{thm:mainthm_minus}, we immediately obtain a corollary, which is a weaker version of Conjecture \ref{conj:erdos413} (Erd\H{o}s Problem \#413).
\begin{cor}
    There are infinitely many $n$ such that $$\omega(n-k) \le k$$ for all integers $1 \ll k <n$.
\end{cor}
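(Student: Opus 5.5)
This corollary follows directly from Theorem~\ref{thm:mainthm_minus}; the only work is to compare $C\log k$ with $k$. Let $C$ be the constant from Theorem~\ref{thm:mainthm_minus}, and let $\mathcal{N}$ be the infinite set of $n$ that it provides. For every $n\in\mathcal{N}$ and every integer $1<k<n$ we then have
\[
\omega(n-k)\le \Omega(n-k)\le C\log k .
\]

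Next choose $k_0=k_0(C)$ such that $C\log k\le k$ for all $k\ge k_0$. Such a $k_0$ exists because $\log k/k\to 0$; explicitly, since $\log k\le 2\sqrt{k}$, any $k_0\ge 4C^2$ works. Then for each $n\in\mathcal{N}$ and each integer $k$ with $k_0\le k<n$,
\[
\omega(n-k)\le C\log k\le k .
\]
The constant $k_0$ depends only on the absolute constant $C$, so it is itself absolute. This is exactly what the condition ``$1\ll k<n$'' in the corollary means. Since $\mathcal{N}$ is infinite, the corollary follows.

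There is no real obstacle here; all of the difficulty lies in Theorem~\ref{thm:mainthm_minus}. The corollary stops short of the full Conjecture~\ref{conj:erdos413} only because of the small $k$. To handle those values one would also need $\omega(n-k)\le k$ for $k<k_0$, in particular $n-1$ prime. Bounding $\omega(n-k)$ by $C\log k$ for small $k$ does not give this when $C$ is large.
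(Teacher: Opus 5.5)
Your proof is correct and matches the paper, which treats this as an immediate consequence of Theorem~\ref{thm:mainthm_minus}: $C\log k\le k$ once $k\ge 4C^2$, since $\log k\le 2\sqrt{k}$. One small slip in your closing aside: $\omega(n-1)\le 1$ only forces $n-1$ to be a prime power (or $1$), not necessarily a prime.
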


\section{Outline}
In this section, we outline the proof of Theorem \ref{thm:mainthm}. Following the broad strategy of \citet{tao_quantitative_2025}, we construct a random variable $\mathbf{n}$ taking values in $[x, 2x]$ and show that with positive probability, $\omega(\mathbf{n}+k) \ll \log k$ for all $k \ge 2$. By the union bound, it suffices to prove that
\begin{equation} \label{eqn:unionbound}
\sum_{k=2}^\infty \mathbb{P} \left( \omega(\mathbf{n}+k) \ge C \log k \right) < 1
\end{equation}
for a sufficiently large constant $C$. As noted in \citet{tao_quantitative_2025}, the terms where $k$ is very large (e.g., $k \ge x^{1/100}$) are handled by trivial bounds on $\omega(n+k)$. The main challenge lies in the range $k < x^{1/100}$.

To prove \eqref{eqn:unionbound}, we construct $\mathbf{n}$ by weighting the uniform distribution on $[x, 2x]$ with a product of independent Selberg-type sieves. Specifically, we choose $n \in [x, 2x]$ with probability proportional to
\[
w(n) \coloneqq \mathds{1}_{p \mid n \, \forall p \le 0.15 \log x} \prod_{k=1}^K w_{R_k}(n+k)^2,
\]
where $w_{R_k}(n+k)$ are sieve weights of Goldston–Pintz–Yıldırım type and $K = (\log x)^{1/1000}$. Morally, these weights act as a proxy for the indicator function that $n+k$ has no prime factors less than the sieve level $R_k$. 

A key technical requirement for this construction is that the product of the sieve levels must satisfy $\prod_{k=1}^K R_k < x^\theta$ for some $\theta < 1$. However, to ensure that $n+k$ has few prime factors, we want each $R_k$ to be as large as possible. To balance these requirements, we choose a polynomial decay for the sieve levels:
\[
R_k = x^{c/k^{50}}
\]
for a small constant $c > 0$. This choice of polynomial decay is a quantitative refinement of \citet{tao_quantitative_2025} and is essential for the sharp bounds required in Theorem \ref{thm:mainthm}.

Under this weighting, an integer $n+k$ behaves like a random integer conditioned to have no prime factors smaller than $R_k$. If $n+k$ has no prime factors below $R_k$, it trivially has at most $\log(2x)/\log R_k \ll k^{50}$ prime factors, but this is far too large for our purposes. Instead, we exploit the fact that a typical integer with no prime factors below $R_k$ has roughly
\[
\log \left( \frac{\log x}{\log R_k} \right) \approx \log(k^{50}/c) \asymp \log k
\]
prime factors. To ensure that all $n+k$ simultaneously exhibit this typical behavior, we prove a strong concentration of measure result. Specifically, we show that the number of prime factors $\omega(n+k)$ satisfies a central limit theorem-type bound under the sieve weights, with mean and variance proportional to $\log(\log x / \log R_k)$. 

From this concentration, we deduce that the probability of the tail event is small:
\begin{equation} \label{eqn:outlinelargedeviations}
\mathbb{P} \left( \omega(\mathbf{n}+k) \ge C \log k \right) \ll e^{-c' C \log k} = \frac{1}{k^{c' C}}.
\end{equation}
To establish \eqref{eqn:outlinelargedeviations}, we derived an upper bound for the exponential moment. By contrast, \citet{tao_quantitative_2025} obtained a comparable bound for $\mathbb{P}(\omega(\mathbf{n}+k) \ge Ck)$ using second-moment estimates. This perspective plays an important role in enabling our quantitative refinement.
By choosing $C$ sufficiently large, we ensure this probability is less than $1/(100k^2)$, allowing the sum in \eqref{eqn:unionbound} to converge and remain less than 1.

\section{Acknowledgements}
I am very grateful to my supervisor, James Maynard, for suggesting this problem and for many helpful comments and discussions. This work was supported by the Oxford–Croucher Scholarship.

\section{Notation}
Let $x$ be an asymptotic parameter tending to $\infty$. We write $X\ll Y$, $X\gg Y$, or $X=O(Y)$ to mean that $|X|\le CY$ for some constant $C$, and $X=o(Y)$ to mean that $|X|\le c(x)Y$ for some $c(x)\to 0$ as $x\to\infty$. We write $X\asymp Y$ for $X\ll Y\ll X$. For $j \in \mathbb{N}$, we use $\log_j x$ to denote $\underbrace{\log \cdots \log}_{j \text{ times}} \, x$. For a prime $p$ and a positive integer $n$, we use $\nu_p(n)$ to denote the $p$-adic valuation of $n$, i.e. $p^{\nu_p(n)} \mid n$ but $p^{\nu_p(n)+1} \nmid n$. For a real number $y$, we use $(y)_+$ to denote $\max\{y,0\}$. For a natural number $n$, we use $[n]$ to denote the set $\{1,2,\ldots,n\}$. If $m_1,...,m_k$ are natural numbers, we use $[m_1,...,m_k]$ to denote the least common multiple $\operatorname{lcm}(m_1,...,m_k)$, and we use $(m_1,\dots,m_k)$ to denote the greatest common divisor $\gcd(m_1,\ldots,m_k)$.

\section{Reducing Theorem \ref{thm:mainthm} to Proposition \ref{prop:mainpropaxioms}}
Using the union bound, we show that it suffices to prove the following proposition.
\begin{prop} \label{prop:mainlargedeviations}
    For any sufficiently large constant $C$ and sufficiently large $x$ in terms of $C$, we may construct a random variable $\mathbf{n}$ taking values in $[x,2x]$ such that for every integer $2 \le k \le x^{1/100}$,
\begin{align}
  \mathbb{P}(\Omega(\mathbf{n}+k)>C \log k) < \frac{6}{\pi^2k^2}. \label{eqn:mainlargedeviations}
\end{align}
\end{prop}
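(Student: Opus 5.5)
We will prove the proposition by the construction sketched in the Outline, with the exponential moment of $\Omega(\mathbf{n}+k)$ as the main tool.

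\textbf{Construction.} Fix a small $c>0$ and set $K=(\log x)^{1/1000}$ and $R_k=x^{c/k^{50}}$ for $k\le K$. Let $\mathbf{n}$ be drawn from $[x,2x]$ with probability proportional to
\[
w(n)=\mathds{1}_{p\mid n\,\forall p\le 0.15\log x}\prod_{k=1}^K w_{R_k}(n+k)^2,
\]
where the $w_{R}$ are GPY-type Selberg weights. Since $\prod_k R_k\le x^{c\zeta(50)}<x^{1/2}$, and the primorial of $0.15\log x$ is at most $x^{0.16}$, the total modulus stays below $x^{\theta}$ with $\theta<1$. So by the Chinese remainder theorem and the standard GPY computation, sums of the shape $\sum_{n\in[x,2x],\, n\equiv a\ (d)} w(n)$ are asymptotically multiplicative. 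They factor into independent local contributions, up to negligible error, as long as $d$ is not too large.

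\textbf{Exponential moment.} For each $2\le k\le K$ we bound $\mathbb{E}\, e^{\lambda\Omega(\mathbf{n}+k)}$ for a fixed small $\lambda$. Expanding this over divisors reduces it to sums of $w$ along progressions modulo $d$, and these have the local structure just described. The factor $w_{R_k}(n+k)^2$ makes $n+k$ behave like an integer free of prime factors below $R_k$, so the primes in $[R_k,x]$ contribute roughly
\[
\prod_{R_k\le p\le x}\Bigl(1+\frac{e^\lambda-1}{p}\Bigr)\ll \Bigl(\frac{\log x}{\log R_k}\Bigr)^{O(1)}\ll k^{O(1)}.
\]
Small primes $p<R_k$ can still divide $n+k$ because the sieve is only a proxy, and some of these are forced to divide $n+k$ through the other shifts $n+j$. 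Their contribution should be bounded by a constant: the Selberg weight penalizes them, and the forced divisibility by $p\le 0.15\log x$ affects only finitely many valuations $\nu_p(n+k)\le\nu_p(k)$. These valuations contribute $\ll\Omega(k)\le\log k/\log 2$, which is acceptable.

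Large divisors $d>x^{1/2}$ are handled as in Tao--Teräväinen. We truncate by writing $\Omega(n+k)\le\Omega_{\le x^{\delta}}(n+k)+1/\delta$ and apply the moment bound only to the prime factors up to $x^{\delta}$. Markov's inequality then gives
\[
\mathbb{P}(\Omega(\mathbf{n}+k)>C\log k)\le k^{O(1)}e^{-\lambda C\log k},
\]
which is below $6/(\pi^2k^2)$ once $C$ is large.

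\textbf{Range $K<k\le x^{1/100}$.} No sieve weight sits at these shifts, so the distribution of $\mathbf{n}+k$ with respect to primes up to about $x^{1/100}$ is close to uniform, apart from the constraints coming from primes dividing some $n+j$ with $j\le K$. Here we need the tail of $\Omega$ beyond $C\log k\ge C\log K\asymp C\log_2 x$. For a typical integer, $\Omega$ has exponential moment $(\log x)^{O(\lambda)}$, and this gives probability $(\log x)^{O(1)-\lambda C/1000}$. That falls short of $k^{-2}$ when $k$ is polynomial in $x$. We will therefore use the exponential moment with $\lambda$ allowed to be large, say $\lambda\approx\log 2$ to handle $\Omega$ at the prime $2$. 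Combining it with the truncation $\Omega(n)\le\Omega_{\le y}(n)+\log(2x)/\log y$ and the choice $y=k^{1/C'}$ gives
\[
\mathbb{P}(\Omega_{\le y}(\mathbf{n}+k)>\tfrac{C}{2}\log k)\ll (\log y)^{O(1)}e^{-\lambda C\log k/2},
\]
while the primes above $y$ contribute at most $\log(2x)/\log y$.

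\textbf{Main obstacle.} The bound $\log(2x)/\log y$ is not $O(\log k)$ when $k$ is small relative to $x$, so the primes above $y$ need a separate treatment. This is exactly where the concentration estimate must be strong: we need the full exponential moment of $\Omega$ including large primes, with the truncation done multiplicatively through the local factorization rather than trivially. I expect this uniform exponential-moment bound, valid for all $k$ up to $x^{1/100}$ and controlling the interaction between the sieve weights and the small-prime factors of $n+k$, to be the main difficulty.
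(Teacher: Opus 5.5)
There is a genuine gap, and your last paragraph concedes it: nothing in the proposal bounds the large prime factors of $\mathbf{n}+k$ in a way the sieve can verify. Expanding $\mathbb{E}\,e^{\lambda\Omega_{\le y}(\mathbf{n}+k)}=\sum_d g(d)\,\mathbb{P}(d\mid\mathbf{n}+k)$, with $g(p^j)=(e^{\lambda}-1)e^{\lambda(j-1)}$, involves every $y$-smooth $d\le 3x$. Capping the \emph{primes} at $y=x^{\delta}$ does not cap the \emph{size} of $d$, because $\mathbf{n}+k$ may itself be $x^{\delta}$-smooth. The multiplicative local description of $\mathbb{P}(d\mid\mathbf{n}+k)$ is only available while $d\,W\prod_k R_k^2$ stays below $x^{1-\varepsilon}$. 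So this exponential moment cannot be evaluated this way, and ``doing the truncation multiplicatively through the local factorization'' is circular, since that factorization only exists below the level. Your other truncation, $y=k^{1/C'}$, leaves up to $\log(2x)/\log y$ prime factors, which is not $O(\log k)$ unless $\log k\gg\sqrt{\log x}$. Separately, your worry about $K<k\le x^{1/100}$ is unfounded. There $\log\log x\le 1000\log k$, so an exponential moment of size $(\log x)^{O(\lambda)}$ already gives $k^{O(\lambda)-\lambda C}$. Your bound $(\log x)^{O(1)-\lambda C/1000}$ came from replacing $k^{-\lambda C}$ by its worst case.

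The missing idea is to tie the prime cutoff to $k$ and to use moments of order $s\asymp\log k$ instead of the full exponential moment. The paper takes $T=x^{1/(10A\log k)}$, so trivially at most $\log(2x)/\log T\le 11A\log k$ prime factors of $\mathbf{n}+k$ exceed $T$. It then bounds the $2s$-th moment of $\sum_{R_k<p\le T}(\mathds{1}_{p\mid\mathbf{n}+k}-1/p)$ with $s\le A\log k$. Every divisor that occurs is a product of $O(s)$ primes $\le T$, hence at most $x^{1/5}$, inside the sieve's range. Proposition~\ref{prop:mainpropaxioms}(B), $\mathbb{P}(p_1\cdots p_j\mid\mathbf{n}+k)\ll 8^{s}/(p_1\cdots p_j)$, together with $\sum_{R_k<p\le T}1/p\ll\log k$ (true in both ranges of $k$), gives a moment bound $(132A\log k)^{2s}$. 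Chebyshev with $s\approx\log(C_2k)$ then gives the $k^{-2}$ tail.

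The remaining pieces also need more than you provide. For $w<p\le R_k$, the weight $\nu$ does not vanish when such a prime divides $n+k$, so ``the Selberg weight penalizes them'' is not an argument. The paper proves $\sum\mathbb{P}(p_1\cdots p_j\mid\mathbf{n}+k)\ll(C_3\log s)^s$ from the local bound $|E_p|\le\frac{2}{p}\min\{2,(1+|t|)\frac{\log p}{\log R_k}\}$ and the $\exp(-c|t|^{1/2})$ decay of $\widehat\eta$ (from the Gevrey-class choice of $\eta$); this is what lets $s$ grow like $\log k$. Prime powers $p^j$ with $j\ge 2$ need (D). Finally, your claim that tiny primes contribute only $\nu_p(k)$ is false when only $p\mid\mathbf{n}$ is imposed. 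For $p\mid k$ the excess $\nu_p(\mathbf{n}+k)-\nu_p(k)$ is roughly geometric; for $k=2$, $4\mid\mathbf{n}+2$ with probability about $1/2$. The paper imposes $p^4\mid\mathbf{n}$, so $\nu_p(\mathbf{n}+k)=\nu_p(k)$ whenever $p^4\nmid k$. It controls the excess for $p^4\mid k$ by (E) and a count of at most $\binom{m+r-1}{r-1}$ compositions.
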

\begin{proof}[Proof of Theorem \ref{thm:mainthm} using Proposition \ref{prop:mainlargedeviations}]
From \eqref{eqn:mainlargedeviations} and the union bound, for sufficiently large $C_0$ and $x \in \mathbb{R}^+$ large in terms of $C_0$, we have
  \[
  \mathbb{P} \left(\bigcup_{2 \le k \le x^{1/100}} \left\{ \Omega(\mathbf{n}+k)>C_0 \log k \right\}\right) \le \sum_{2 \le k \le x^{1/100}} \mathbb{P}(\Omega(\mathbf{n}+k) >C_0 \log k) <1,
  \]
  and so
  \[
  \mathbb{P} \left(\bigcap_{2 \le k \le x^{1/100}} \left\{ \Omega(\mathbf{n}+k) \le C_0 \log k \right\}\right) >0.
  \]
  Therefore, we can find $n \in [x,2x]$ such that $\Omega(n+k) \le C_0 \log k$ for all $2 \le k \le x^{1/100}$. For these choices of $n$, we also have for $x^{1/100}<k \le 2x$,
  \[
  \Omega(n+k) \le \frac{\log (n+k)}{\log 2} \le \frac{\log(4x)}{\log 2} \le \frac{3}{\log 2} \log x \le \frac{300}{\log 2} \log k,
  \]
  and for $k>2x$,
  \[
  \Omega(n+k) \le \frac{\log(n+k)}{\log 2} \le \frac{\log(2k)}{\log 2} \le \frac{2}{\log 2} \log k.
  \]
  Therefore, choosing $C=\max\{C_0,300/\log 2\}$, we are done.
\end{proof}
Next, we will show that it suffices to split into four ranges of prime factor sizes.
\begin{prop} \label{prop:largedeviations}
Let $C_2 \ge 4$. Then, for $C_1$ sufficiently large (depending on $C_2$), there exists a constant $A=A(C_2)>0$ such that the following holds.

\noindent For all sufficiently large $x \in \mathbb{R}^+$ (in terms of $A$), and for all integers $2 \le k \le x^{1/100}$, define the parameters
\[
w := 0.15 \log x, \qquad 
T{} := x^{1/{10A \log k}},
\]
and
\[
R_k :=
\begin{cases}
x^{1/100 k^{50}}, & \text{if } k \le (\log x)^{1/1000},\\[6pt]
w, & \text{if } (\log x)^{1/1000} < k \le x^{1/100}.
\end{cases}
\]

Then one can construct a random variable $\mathbf{n}$ taking values in the interval $[x, 2x]$ such that, with probability 1, the integer $\mathbf{n}$ is divisible by $p^4$ for every prime $p \le w$. Moreover, the following additional properties hold:
\begin{align}
  \mathbb{P}\left(\omega_{w<\cdot \le R_k}(\mathbf{n}+k) \ge C_1 \log k\right) &\ll \frac{1}{C_2^2 k^2},\label{eqn:medium_largedeviations}\\
\mathbb{P} \left( \sum_{R_k<p \le T{}} \left( \mathds{1}_{p \mid \mathbf{n}+k}-\frac1p \right) \ge C_1 \log k \right) &\ll \frac{1}{C_2^2 k^2},\label{eqn:large_largedeviations}\\
\mathbb{P} \left(\sum_{j \ge 2} \sum_{w<p \le T{}} \mathds{1}_{p^j \mid \mathbf{n}+k} \ge C_1 \log k\right)  &\ll \frac{1}{C_2^2k^2},
\label{eqn:power_largedeviations_medium}\\
\mathbb{P}\left(\sum_{\substack{p \le w\\p^4 \mid k}} (\nu_p(\mathbf{n}+k)-\nu_p(k))_+ \ge C_1 \log k \right) &\le \frac{3}{2\pi^2 k^2}. \label{eqn:power_largedeviations_tiny}
\end{align}
Here the implied constants are absolute constants and do not depend on $A$, $C_1$, or $C_2$.
\end{prop}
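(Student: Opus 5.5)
The plan is to take $\mathbf{n}$ as in Section 2: choose $n\in[x,2x]$ with probability proportional to
\[
w(n)=\mathds{1}_{W^4\mid n}\prod_{k=1}^{K} w_{R_k}(n+k)^2,\qquad W=\prod_{p\le w}p,\quad K=(\log x)^{1/1000},
\]
with GPY/Selberg weights of level $R_k=x^{1/100k^{50}}$. The divisibility of $\mathbf{n}$ by $p^4$ for every $p\le w$ then holds by construction. The total modulus is acceptable. By the prime number theorem $W^4=x^{0.6+o(1)}$. Also $\prod_k R_k^2\le x^{\sum_k 2/100k^{50}}\le x^{0.03}$. So all sums we need, namely $\sum_n w(n)\mathds{1}_{d\mid n+k}$ with $d$ up to $x^{1/10}$, have total level $<x^{0.75}$. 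The uniform distribution on $[x,2x]$ then gives these sums as $\frac{x}{W^4}\,g_k(d)\,\mathfrak{S}\,(1+o(1))$ plus negligible error. Here $g_k$ is multiplicative and is computed as in the standard GPY/Maynard evaluation: for $(d,W)=1$ we get $g_k(p)\ll \frac{\log p}{p\log R_k}$ if $p\le R_k$ (and $k\le K$), and $g_k(p)=\frac1p(1+O(\frac{1}{p}))$ if $p>R_k$. This multiplicative correlation estimate, uniform in squarefree (or prime power) $d\le x^{1/10}$ coprime to $W$, is the technical heart. I expect it to be the main obstacle, since the error terms from the other $K-1$ sieve factors must be controlled multiplicatively in $d$ and not merely additively.

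Granting it, each tail bound comes from an exponential moment and Markov's inequality. For \eqref{eqn:medium_largedeviations}, expand $\mathbb{E}\,e^{\lambda\omega_{w<\cdot\le R_k}(\mathbf{n}+k)}$ as a sum over squarefree $d$ composed of primes in $(w,R_k]$, weighted by $(e^\lambda-1)^{\omega(d)}$. This gives
\[
\prod_{w<p\le R_k}\bigl(1+(e^\lambda-1)g_k(p)\bigr)\le \exp\Bigl(O(e^\lambda)\sum_{p\le R_k}\tfrac{\log p}{p\log R_k}\Bigr)=O_\lambda(1).
\]
When $k>K$ the range is empty. The moduli here are at most $R_k^{O(\log k)}$ after truncating $\omega(d)$ at about $C_1\log k$, which is harmless since larger $\omega(d)$ is handled by a higher-moment Markov bound. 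Taking $\lambda$ fixed and $C_1$ large gives $e^{-\lambda C_1\log k}\le 1/(C_2^2k^2)$.

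For \eqref{eqn:large_largedeviations}, use the centred Poisson-type moment
\[
\mathbb{E}\exp\Bigl(\lambda\sum_{R_k<p\le T}\bigl(\mathds{1}_{p\mid\mathbf{n}+k}-\tfrac1p\bigr)\Bigr)\le \exp\Bigl((e^\lambda-1-\lambda+o(1))\sum_{R_k<p\le T}\tfrac1p\Bigr).
\]
Here $\sum_{R_k<p\le T}1/p=\log\frac{\log T}{\log R_k}\ll\log k$ because of the polynomial decay of $R_k$ (and $\ll\log\log x\ll \log k$ when $k>K$, since then $R_k=w$). With $\lambda=1$ this gives probability $\le e^{(O(1)-C_1)\log k}$. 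To keep the moduli admissible we truncate to $d$ with $\omega(d)\le M\asymp C_1\log k$, so that $d\le T^{M}=x^{M/10A\log k}\le x^{1/10}$ once $A$ is large in terms of $C_1$ and $C_2$. This is where the dependence $A=A(C_2)$ enters. The truncation error is controlled by the $M$-th moment, which is itself bounded by the same correlation estimate.

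For \eqref{eqn:power_largedeviations_medium}, the expected number of pairs $(p,j)$ with $w<p\le T$, $j\ge2$ and $p^j\mid\mathbf{n}+k$ is $\ll\sum_{p>w}p^{-2}\ll 1/(w\log w)$. Exponential moments of this count are similarly bounded, so Markov gives the bound, and in fact a much smaller one. For \eqref{eqn:power_largedeviations_tiny}, note that at most $\log k/(4\log 2)$ primes $p\le w$ have $p^4\mid k$. For each such $p$, write $\mathbf{n}=W^4\mathbf{m}$. Since the sieve weights involve no primes $\le w$, the distribution of $\mathbf{m}$ modulo powers of $p$ is essentially uniform. Hence $\mathbb{P}(\nu_p(\mathbf{n}+k)-\nu_p(k)\ge j)\le (1+o(1))p^{-j}$, with independence across distinct $p$ up to $1+o(1)$. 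The exponential moment is then at most $\prod_{p}(1+O(e^{\lambda}/p))\le e^{O(\log k)}$ with $\lambda=\log 2$. For $C_1$ large, this yields the explicit constant $3/(2\pi^2k^2)$, using $k\ge2$ and the factor $e^{-\lambda C_1\log k}$.
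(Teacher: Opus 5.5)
Your route, truncated exponential moments plus Markov in place of the paper's $s$-th moments with $s\asymp\log k$, is a reasonable variant. However, the estimate you call the technical heart is false as stated, and your proof of \eqref{eqn:medium_largedeviations} rests on it.

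The quantity $\mathbb{P}(d\mid\mathbf{n}+k)$ is not multiplicative in $d$. Fourier-expand the cutoff as in Lemma~\ref{lem:mainpropinitial}, and take $k\le K$ and $d$ composed of primes in $(w,R_k]$. Then $\mathbb{P}(d\mid\mathbf{n}+k)$ is an average over $(t,t')$, against the weight $\frac{(1+it)(1+it')}{2+i(t+t')}\widehat\eta(t)\widehat\eta(t')$, of $\prod_{p\mid d}\frac1p\bigl(1-p^{-(1+it)/\log R_k}\bigr)\bigl(1-p^{-(1+it')/\log R_k}\bigr)$. Every prime of $d$ shares the same $(t,t')$. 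Each factor is only bounded by $\frac2p\min\{2,(1+|t|)\frac{\log p}{\log R_k}\}$. So a factor $(1+|t|)^{\omega(d)}$ appears, and no bound $\prod_{p\mid d}C\frac{\log p}{p\log R_k}$ uniform in $\omega(d)$ follows. Indeed, the paper obtains $(C_3\log j)^j$, not $C^j$, for the sum over ordered $j$-tuples. Falling back on the crude bound $8^{\omega(d)}/d$ does not help either: it gives an exponential moment of size $(\log x)^{O(1)}$, which is useless for fixed $k$.

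The missing idea is to keep the Fourier average outside the sum over $d$. For fixed $t$, $\sum_{w<p\le R_k}\frac1p\min\{2,(1+|t|)\frac{\log p}{\log R_k}\}\ll 1+\log(1+|t|)$. Hence the truncated exponential moment is $\ll\iint(1+|t|)^{O(e^{\lambda})}(1+|t'|)\,|\widehat\eta(t)\widehat\eta(t')|\,\mathrm{d}t\,\mathrm{d}t'$. This is finite precisely because $\eta$ is of Gevrey class, so that $|\widehat\eta(t)|\ll e^{-c|t|^{1/2}}$. That is the mechanism behind Proposition~\ref{prop:mainpropaxioms}(C), and your proposal does not contain it.

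Above $R_k$, your asymptotic $g_k(p)=\frac1p(1+O(1/p))$ also fails when $k>K$, because primes $p\mid k-k'$ with $k'\le K$ carry $t_{k'}$-dependent factors. There, though, the crude bound $\ll 8^{\omega(d)}/d$ of Proposition~\ref{prop:mainpropaxioms}(B) suffices: $\sum_{R_k<p\le T}1/p\ll\log k$, so an uncentred exponential moment of size $k^{O(1)}$ results, and $C_1$ absorbs it. The centred Poisson bound you state is not needed.

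There are also three smaller problems.

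\emph{Tiny primes.} For \eqref{eqn:power_largedeviations_tiny}, the choice $\lambda=\log 2$ fails at $p=2$. If $\nu_2(k)=4$, then $\mathbb{P}(\nu_2(\mathbf{n}+k)-\nu_2(k)\ge j)\approx 2^{-j}$ for $2^j\le x^{0.2}$. So $\mathbb{E}\,2^{(\nu_2(\mathbf{n}+k)-\nu_2(k))_+}\gg\log x$, and your factor $1+O(e^\lambda/p)$ is wrong there. Moreover, the joint probabilities equal $\prod_p p^{-h_p}$ only up to an additive $O(x^{-0.3})$. That error cannot be summed against $e^{\lambda\sum h_p}$ over all $h_p\le\log_p(3x)$.

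\emph{Prime powers.} The same issue affects your one-line treatment of \eqref{eqn:power_largedeviations_medium}. The $O(x^{-0.1})$ errors of Proposition~\ref{prop:mainpropaxioms}(D) cannot be summed against exponential weights over all $\prod p_i^{a_i}\le 3x$. Even for the $s_3$-th moment the paper needs a Dirichlet-integral count to control them.

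In both cases, a union bound over minimal witnesses repairs this, for a suitable $m\le C_1\log k$:
\begin{itemize}
\item for the tiny primes, tuples $(h_p)$ with $\sum h_p=m$;
\item for the prime powers, products $\prod p_i^{b_i+1}$ with distinct $p_i\in(w,T]$ and $\sum b_i=m$.
\end{itemize}
The paper does exactly this for the tiny primes.

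\emph{Quantifiers.} You let $A$ depend on $C_1$, whereas the statement has $A=A(C_2)$. This is harmless but should be said: for fixed $A$ the events shrink as $C_1$ grows, so you can fix $C_1$ at its threshold.
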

\begin{proof}[Proof of Proposition \ref{prop:mainlargedeviations} using Proposition \ref{prop:largedeviations}]
    We choose $C_2$ and $C_1$ later. Since there are only $\le 11A \log k$ many primes $p>T{}$ dividing $\mathbf{n}+k$, we have
\begin{align*}
  \Omega(\mathbf{n}+k) & \le \sum_{\substack{p \le w\\p^4 \nmid k}} \nu_p(\mathbf{n}+k)+\sum_{w<p \le R_k} \mathds{1}_{p \mid \mathbf{n}+k} + \sum_{R_k <p \le T{}} \mathds{1}_{p \mid \mathbf{n}+k}+11A\log k\\
  &\qquad+\sum_{j \ge 2} \sum_{w<p \le T{}} \mathds{1}_{p^j \mid \mathbf{n}+k}+\sum_{\substack{p \le w\\p^4 \mid k}} \nu_p(\mathbf{n}+k)
\end{align*}
We first treat the first and last terms. Under the condition that $p^4 \mid \mathbf{n}$ for every prime $p \le w$ (which occurs with probability 1), we have 
\[
\sum_{\substack{p \le w\\p^4 \nmid k}} \nu_p(\mathbf{n}+k) \le \sum_{\substack{p \le w\\p^4 \nmid k}} \nu_p(k),
\]
which implies
\begin{align*}
    \sum_{\substack{p \le w\\p^4 \nmid k}} \nu_p(\mathbf{n}+k)+\sum_{\substack{p \le w\\p^4 \mid k}} \nu_p(\mathbf{n}+k)&\le \sum_{p \le w} \nu_p(k)+\sum_{\substack{p \le w\\ p^4 \mid k}} (\nu_p(\mathbf{n}+k)-\nu_p(k))_+ \\
    &\le \Omega(k) +\sum_{\substack{p \le w\\ p^4 \mid k}} (\nu_p(\mathbf{n}+k)-\nu_p(k))_+\\
    &\le 2\log k+\sum_{\substack{p \le w\\ p^4 \mid k}} (\nu_p(\mathbf{n}+k)-\nu_p(k))_+,
\end{align*}
where for any real number $y$, we denote $(y)_+=\max\{y,0\}$. Altogether, this gives
\begin{align*}
  \Omega(\mathbf{n}+k) \le &\sum_{w<p \le R_k} \mathds{1}_{p \mid \mathbf{n}+k} + \sum_{R_k <p \le T{}} \mathds{1}_{p \mid \mathbf{n}+k}+(12A+2)\log k\\
  &\qquad +\sum_{j \ge 2} \sum_{\substack{w<p \le T}} \mathds{1}_{p^j \mid \mathbf{n}+k}+\sum_{\substack{p \le w\\p^4 \mid k}} (\nu_p(\mathbf{n}+k)-\nu_p(k))_+,
\end{align*}
under the condition that $p^4 \mid \mathbf{n}$ for every prime $p \le w$.
For the second term, we write
\[
\omega_{R_k<\cdot \le T{}}(\mathbf{n}+k) = \sum_{R_k<p \le T{}} \left( \mathds{1}_{p \mid \mathbf{n}+k}-\frac1p \right)+\sum_{R_k<p \le T{}} \frac1p.
\]
By Mertens' Theorem, we have
\[
\sum_{R_k<p \le T{}} \frac1p \ll \log \frac{\log T{}}{\log R_k},
\]
and for $1 < k \le (\log x)^{1/1000}$ we have
\[\log \frac{\log T{}}{\log R_k} \le \log \left( \frac{\frac{1}{10A \log k} \log x}{\frac{1}{100k^{50}} \log x} \right)\le 54 \log k,\]
while for $(\log x)^{1/1000}<k \le x^{1/100}$ we have
\[
\log \frac{\log T{}}{\log R_k} \le \log \log x \le 1000 \log (\log x)^{1/1000} < 1000 \log k.  
\]
Therefore, we have
\begin{align}
  \Omega(\mathbf{n}+k) \le &\sum_{w<p \le R_k}  \mathds{1}_{p \mid \mathbf{n}+k} + \sum_{R_k <p \le T{}} \left( \mathds{1}_{p \mid \mathbf{n}+k}-\frac1p \right)+(12A+1002)\log k \label{eqn:splittingOmega}\\
  &\qquad +\sum_{j \ge 2} \sum_{\substack{w<p \le T}} \mathds{1}_{p^j \mid \mathbf{n}+k}+\sum_{\substack{p \le w\\p^4 \mid k}} (\nu_p(\mathbf{n}+k)-\nu_p(k))_+, \notag{}
\end{align}
under the condition that $p^4 \mid \mathbf{n}$ for every prime $p \le w$. Now let $C_4$ be the maxima of the implied constants in Proposition \ref{prop:largedeviations}, and we choose $C_2=\max\{\pi\sqrt{2C_4/3}+1,4\}$. Let $C_1$ be sufficiently large and we choose 
$$C := 12A+4C_1+1002.$$ 
Note that $\Omega(\mathbf{n}+k)>C \log k$ implies
\[
\sum_{w<p \le R_k}  \mathds{1}_{p \mid \mathbf{n}+k} + \sum_{R_k <p \le T{}} \left( \mathds{1}_{p \mid \mathbf{n}+k}-\frac1p \right)
 +\sum_{j \ge 2} \sum_{\substack{w<p \le T}} \mathds{1}_{p^j \mid \mathbf{n}+k}+\sum_{\substack{p \le w\\p^4 \mid k}} (\nu_p(\mathbf{n}+k)-\nu_p(k))_+>4C_1 \log k,
\]
so by the pigeonhole principle, at least one of the sums above is greater than $C_1 \log k$. Using Proposition \ref{prop:largedeviations} and the union bound, we get
    \begin{align*}
        \mathbb{P}(\Omega(\mathbf{n}+k)> C \log k)\le \frac{3C_4}{C_2^2k^2}+\frac{3}{2 \pi^2k^2}<\frac{6}{\pi^2k^2}
    \end{align*}
    for all integers $2 \le k \le x^{1/100}$.
\end{proof}
Using Chebyshev's inequality, that is, for any integer $s \ge 3$, a discrete random variable $X$ with $\mathbb{E}X,\mathbb{E}|X-\mathbb{E}X|^s<\infty$, and $r>0$ real,
\[
\mathbb{P}(|X-\mathbb{E}X| \ge r) \le \frac{\mathbb{E}|X-\mathbb{E}X|^s}{r^s},
\]
we will reduce Proposition \ref{prop:largedeviations} to upper bounds on high moments.
\begin{lem} \label{lem:moment_medium_large}
Suppose $A>1$, $2 \le k \le x^{1/100}$ and $\log k \le s_1,s_2,s_3 \le A \log k$ are integers. Then for any $x \in \mathbb{R}^+$ sufficiently large in terms of $A$ and $\e$, define the parameters
\[
w := 0.15 \log x, \qquad 
T{} := x^{1/{10A \log k}},
\]
and
\[
R_k :=
\begin{cases}
x^{1/100 k^{50}}, & \text{if } k \le (\log x)^{1/1000},\\[6pt]
w, & \text{if } (\log x)^{1/1000} < k \le x^{1/100}.
\end{cases}
\]
Then one can construct a random variable $\mathbf{n}$ taking values in the interval $[x, 2x]$ such that, with probability 1, the integer $\mathbf{n}$ is divisible by $p^4$ for every prime $p \le w$. Moreover, the following additional properties hold for some constant $C_3 \ge 3$:
\begin{align}
  \mathbb{E} \left| \sum_{w<p \le R_k} \mathds{1}_{p \mid \mathbf{n}+k} \right|^{s_1} &\ll (2C_3s_1)^{s_1},\label{eqn:moment_medium_primes}\\
  \mathbb{E} \left| \sum_{R_k<p \le T{}} \left( \mathds{1}_{p \mid \mathbf{n}+k} -\frac1p \right) \right|^{2s_2} &\ll  \left( 132A \log k \right)^{2s_2},\label{eqn:moment_large_primes}\\
  \mathbb{E} \left|\sum_{j \ge 2} \sum_{w<p \le T{}} \mathds{1}_{p^j \mid \mathbf{n}+k}\right|^{s_3} &\ll \left(\max\left\{4e^2C_3,968\left( \frac{A \log k}{s_3} \right)^2\right\}s_3 \right)^{s_3}, \label{eqn:moment_power_primes_medium}\\
    \mathbb{P}(\nu_p(\mathbf{n}+k)-\nu_p(k) \geq h_p \ \forall p \in \mathcal{P})&\le 2 \left( \prod_{p \in \mathcal{P}} p^{-h_p}+x^{-0.3} \right),
    \label{eqn:mainpropE}
\end{align}
for any $\mathcal{P} \subseteq \{p \le w:p^4 \mid k\}$, and $h_p \in \mathbb{Z}^+$ for $p \in \mathcal{P}$. Here the implied constants are absolute and independent of $A$.
\end{lem}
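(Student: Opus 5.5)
We construct $\mathbf{n}$ as in the outline: with $K=(\log x)^{1/1000}$, pick $n\in[x,2x]$ with probability proportional to
\[
w(n)=\mathds{1}_{p^4\mid n\ \forall p\le w}\prod_{k'=1}^{K} w_{R_{k'}}(n+k')^2,
\]
where $w_{R}(m)=\sum_{d\mid m,\, d\le R,\, (d,W)=1}\mu(d)\,\bigl(\log(R/d)/\log R\bigr)_+^{J}$ is a GPY-type weight, with $W=\prod_{p\le w}p$ and $J$ a fixed large integer. Since $\prod_{p\le w}p^4=e^{(0.6+o(1))\log x}$ and $\prod_{k'\le K}R_{k'}^2\le x^{2\sum 1/(100k'^{50})}\le x^{0.03}$, the total sieve level stays below $x^{0.7}$. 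The standard GPY/Maynard asymptotics therefore hold: for squarefree $d$ coprime to $W$ with $d\le x^{0.2}$,
\[
\sum_n w(n)\mathds{1}_{d\mid n+k}=\frac{g_k(d)}{d}\sum_n w(n)+O(\text{error}),
\]
where $g_k$ is multiplicative. Here $g_k(p)$ is $O(1)$ for primes $p\le R_k$ (the sieve on $n+k$ penalises such $p$, and for $k\le K$ this makes $g_k(p)/p$ of size $\ll 1/p\cdot\log p/\log R_k$ in the relevant sense), and $g_k(p)=1+O(\text{small})$ for $p>R_k$ not equal to the few primes dividing $k-k'$. The error terms total $O(x^{-0.3})$ relative to the main term, using the level-of-distribution budget.

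With this correlation formula the four estimates follow by expanding moments. For \eqref{eqn:moment_medium_primes}, expand $(\sum_{w<p\le R_k}\mathds 1_{p\mid \mathbf n+k})^{s_1}$ into a sum over tuples of primes grouped by the set of distinct primes, of size $r\le s_1$. The $r$ distinct primes lie in $(w,R_k]$, so their product is at most $R_k^{s_1}\le x^{A\log k/(100k^{50})}$, which is within the level. Each tuple contributes roughly $\prod g_k(p)/p$. The key point is that, inside the sieved range, the Selberg-type weights give $\sum_{w<p\le R_k} g_k(p)/p\ll 1$ (the sieve density is suppressed below $R_k$). Hence the $s_1$-th moment is bounded by $\sum_r S(s_1,r)\,O(1)^r\le (C_3s_1)^{s_1}$ via Stirling numbers. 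For $k>K$ we have $R_k=w$, so the sum is empty.

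For \eqref{eqn:moment_large_primes}, the centred moment is handled by the standard expansion used for Erdős–Kac cumulants. The product of distinct primes involved is at most $T^{2s_2}=x^{s_2/(5A\log k)}\le x^{1/5}$, which is admissible. The centred variables $\mathds 1_{p\mid\cdot}-1/p$ become asymptotically independent with variance $\approx 1/p$, up to corrections from the primes dividing $k-k'$ and from $g_k(p)-1$. The moment is therefore at most $(O(s_2+V))^{2s_2}$ with $V=\sum_{R_k<p\le T}1/p\ll\log k$ by the Mertens computation already given, which gives the $(132A\log k)^{2s_2}$ shape. The $O(\log k)$ exceptional primes dividing some $k-k'$ contribute a bounded-mean deviation each and are absorbed.

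For \eqref{eqn:moment_power_primes_medium}, expand similarly with moduli $p^{j}$, $j\ge2$. Each contributes $\ll g(p)/p^{j}$, summing to $\ll 1/w$, and the moduli stay below $T^{2s_3}$. The two-term maximum arises from separating the tuples with all primes below $R_k$ from the others. For \eqref{eqn:mainpropE}, the event is the single congruence $n\equiv -k\pmod{\prod p^{\nu_p(k)+h_p}}$ compatible with $p^4\mid n$. Because the sieve weights are coprime to $W$, the weight restricted to this progression equals $\prod p^{-h_p}$ times the total, plus an error of $O(x^{-0.3})$ provided the modulus is below $x^{0.1}$. If the modulus is larger, we bound the event by a smaller subcollection of $h_p$ that still exceeds $x^{-0.3}$. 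The main obstacle will be the uniform correlation asymptotic across all $k\le x^{1/100}$, with error terms controlled when the moment order is as large as $A\log k$; making the moduli fit the level is exactly what the choice of $T$ and the polynomial decay of $R_k$ are designed for.
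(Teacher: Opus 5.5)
\textbf{There is a genuine gap in the correlation input for primes inside the sieve range.}

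You posit $\sum_n w(n)\mathds{1}_{d\mid n+k}\approx \frac{g_k(d)}{d}\sum_n w(n)$ with $g_k$ multiplicative. You then deduce \eqref{eqn:moment_medium_primes} from $\sum_{w<p\le R_k}g_k(p)/p\ll 1$ together with Stirling numbers times $O(1)^r$. For $d=p_1\cdots p_r$ with $w<p_i\le R_k$ this multiplicativity is false. For a Selberg/GPY weight $\lambda_d=\mu(d)F(\log d/\log R_k)$, the main term of $\mathbb{P}(p_1\cdots p_r\mid \mathbf{n}+k)$ is
\[
\frac{1}{p_1\cdots p_r}\cdot\frac{\int_0^\infty \bigl(\Delta_{u_1}\cdots\Delta_{u_r}F'(v)\bigr)^2\,\mathrm{d}v}{\int_0^\infty F'(v)^2\,\mathrm{d}v},\qquad u_i=\frac{\log p_i}{\log R_k},\quad \Delta_uf(v)=f(v)-f(v+u).
\]
This is an $r$-fold finite difference, and it does not factor over the primes. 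For smooth $F$ and small $u_i$ it is roughly $\prod_i u_i^2\cdot\int (F^{(r+1)})^2/\int (F')^2$. In the paper's Fourier form (Lemma \ref{lem:mainpropinitial}), the local factors $\frac1p(1-p^{-(1+it)/\log R_k})(1-p^{-(1+it')/\log R_k})$ are coupled through the common variables $t,t'$.

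So the single-prime sum is indeed $O(1)$, but the $r$-point sums $\Sigma_r=\sum_{p_1,\dots,p_r\text{ distinct}}\mathbb{P}(p_1\cdots p_r\mid\mathbf{n}+k)$ are not $O(1)^r$. At frequency $t$ the sum over one prime variable is $\ll 1+\log(1+|t|)$, so $\Sigma_r\ll \int (C(1+\log(1+|t|)))^r|\widehat{\eta}(t)|\,\mathrm{d}t$. This is $\ll (C_3\log s_1)^{s_1}$ for $r\le s_1$ only because the paper takes $\eta$ Gevrey of order $2$, which gives $\widehat{\eta}(t)\ll e^{-c|t|^{1/2}}$. That estimate is Proposition \ref{prop:mainpropaxioms}(C), the real content of \eqref{eqn:moment_medium_primes}, and your proposal replaces it with an incorrect claim.

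With your cutoff $(\log(R/d)/\log R)_+^J$, which is only $C^{J-1}$, the Fourier transform decays polynomially and the corresponding bound grows like $r!\,c_J^{\,r}$. You would then need a genuine estimate for $\Sigma_r$, and a finer combinatorial step than ``Stirling numbers times $O(1)^r$'', to reach $(Cs_1)^{s_1}$. The same issue affects the primes $w<p\le R_k$ in \eqref{eqn:moment_power_primes_medium}.

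\textbf{There is a second, smaller gap in \eqref{eqn:moment_power_primes_medium}.} The moduli $p_1^{j_1}\cdots p_r^{j_r}$ do not stay below $T^{2s_3}$. The exponents are unbounded, so these products run up to $3x$, far beyond any level of distribution. The paper handles this with Proposition \ref{prop:mainpropaxioms}(D):
\[
\mathbb{P}(p_1^{a_1}\cdots p_j^{a_j}\mid\mathbf{n}+k)=\mathbb{P}(p_1\cdots p_j\mid\mathbf{n}+k)/\prod_i p_i^{a_i-1}+O(x^{-0.1}),
\]
valid for all exponents (Lemma \ref{lem:mainpropD}; the two main terms cancel exactly in each residue class). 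It then counts the tuples with $\prod_i p_i^{j_i}\le 3x$ via a Dirichlet simplex integral, to control the accumulated $O(x^{-0.1})$ errors. That count is what produces the entry $968(A\log k/s_3)^2$ in the maximum; it does not come from a split between primes below and above $R_k$.

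Your outlines for \eqref{eqn:moment_large_primes} and \eqref{eqn:mainpropE} are fine. For the former, no asymptotic independence is needed. It suffices to use the crude bound $\mathbb{P}(d\mid\mathbf{n}+k)\ll 8^{s}/d$ of Proposition \ref{prop:mainpropaxioms}(B), together with $\sum_{R_k<p\le T}1/p\ll\log k$ and Stirling-number combinatorics.
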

In the calculation of $s$-th moments it will be helpful to have the following estimate.
\begin{lem}[Stirling numbers of the second kind] \label{lem:stirling_second_kind}
  For integers $s\ge 1$ and $1\le t\le s$, the Stirling number of the second kind $\displaystyle \left\{ {s \atop t} \right\}$ is the number of ways to partition a set of $s$ labelled objects into $t$ nonempty unlabelled boxes.
  It satisfies the bound
  \[
  \left\{ {s \atop t} \right\} \ll \left(\frac{s}{\log s}\right)^s.
  \]
  for all sufficiently large $s$ and $1\le t \le s$, and the relation
  \[
  \sum_{j=1}^{2s} \left\{2s \atop j \right\} \frac{m!}{(m-j)!}=m^{2s}
  \]
\end{lem}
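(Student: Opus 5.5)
The plan is to treat the two assertions separately. The identity is a double count, and the upper bound follows from bounding the Bell number by a saddle-point (Cauchy coefficient) estimate.

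\emph{The identity.} Fix integers $s,m\ge 1$ and count the maps $f:[2s]\to[m]$; there are $m^{2s}$ of them. Each such $f$ determines the partition of $[2s]$ into its nonempty fibres. Suppose this partition has $j$ blocks, where $1\le j\le 2s$. Conversely, a partition of $[2s]$ into $j$ unlabelled blocks, together with an injective assignment of distinct values in $[m]$ to the blocks, determines $f$ uniquely. To count the injective assignments, list the blocks in a canonical order, say by least element. There are then $m(m-1)\cdots(m-j+1)=m!/(m-j)!$ such assignments, and this count is zero when $j>m$, which is consistent with the convention $1/(m-j)!=0$. Summing over $j$ gives $\sum_{j=1}^{2s}\left\{2s \atop j\right\}\frac{m!}{(m-j)!}=m^{2s}$.

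\emph{The bound.} For every $1\le t\le s$ we have $\left\{ {s \atop t} \right\}\le B_s:=\sum_{t}\left\{ {s \atop t} \right\}$, the Bell number, so it suffices to show $B_s\ll (s/\log s)^s$. I would use the exponential generating function $\sum_{s\ge 0}B_s z^s/s!=e^{e^z-1}$. All its coefficients are nonnegative, so for every $r>0$,
\[
B_s\le s!\,\frac{e^{e^r-1}}{r^s}.
\]
I would take $r=\log s-\log\log s$, so that $e^r=s/\log s$. Stirling's formula gives $s!\le e\sqrt{s}\,(s/e)^s$, and therefore
\[
B_s\le e\sqrt{s}\,\Big(\frac{s}{\log s}\Big)^s e^{-s}\Big(1-\frac{\log\log s}{\log s}\Big)^{-s}e^{s/\log s}.
\]

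\emph{The main step.} The only real work is checking that the correction factors are bounded. Since $\log\log s/\log s\to 0$, we have $(1-u)^{-s}\le e^{2us}$ for $u=\log\log s/\log s$ and $s$ large. Hence the last three factors, together with $\sqrt s$, are at most
\[
\exp\Big(-s+\frac{2s\log\log s}{\log s}+\frac{s}{\log s}+\tfrac12\log s\Big)=e^{-s(1+o(1))}\le 1
\]
for $s$ sufficiently large. This gives $B_s\ll (s/\log s)^s$, and hence the stated bound uniformly in $1\le t\le s$. In fact the saving factor $e^{-s(1+o(1))}$ shows that the bound holds with room to spare. Alternatively, one could simply cite the Berend--Tassa bound $B_s<(0.792\,s/\log(s+1))^s$.
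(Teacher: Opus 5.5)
Your proof is correct. It differs from the paper mainly because the paper gives no argument here: it cites Rennie--Dobson for the bound and Graham--Knuth--Patashnik for the identity.

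Your double count of maps $[2s]\to[m]$ by their fibre partitions is the standard combinatorial proof of the identity. It gives the identity for every $m\ge 1$, reading $m!/(m-j)!$ as the falling factorial, which vanishes for $j>m$. The paper states it only for $m\ge 2s$.

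For the bound, you dominate every $\left\{ {s \atop t} \right\}$ by the Bell number $B_s$. You then apply a one-line coefficient estimate to $e^{e^z-1}$ at the point $e^r=s/\log s$. This makes uniformity in $1\le t\le s$ automatic. By contrast, starting from an estimate that is pointwise in $t$ would need an extra optimisation over $t$, which the paper leaves to the reference. Uniformity in $t$ is exactly what the paper's moment computations use.

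Your computation also shows the stated bound has room to spare: you get $B_s\le (s/(e\log s))^{s}e^{o(s)}$. The paper's citation route buys only brevity.
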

for all integers $m\ge 2s$.
\begin{proof}
  See \citet{rennie_stirling_1969} for the first statement, and \citet{graham1994concrete} for the last statement.
\end{proof}
\begin{proof}[Proof of Proposition \ref{prop:largedeviations} using Lemma \ref{lem:moment_medium_large}]
Recall $C_3$ from Lemma \ref{lem:moment_medium_large}. For a $C_1$ chosen later, and we will apply Lemma \ref{lem:moment_medium_large} with 
$$A= \frac{2 \log C_2}{\log 2} \sqrt{\frac{e^2C_3}{242}} > 1.$$ 
For our subsequent choices of $s_1,s_2,s_3$, we will show at the end that $\log k  \le s_1,s_2,s_3 \le A \log k$ and that $C_1$ satisfies
\begin{align} \label{eqn:C1condition}
    C_1 \ge \max\left\{8eC_3,132eA,330C_3 \left( \frac{\log C_2}{\log 2} \right)^2 \right\}. 
\end{align}
holds.
We first prove \eqref{eqn:medium_largedeviations}. Choosing $s_1=\lceil\frac{1}{4C_3}C_1e^{-1} \log (C_2k) \rceil$, using \eqref{eqn:moment_medium_primes} and Chebyshev's inequality we have
  \begin{align*}
    \mathbb{P}(\omega_{w<\cdot \le R_k}(\mathbf{n}+k) \ge C_1 \log k) \ll \frac{(4C_3s_1)^{s_1}}{(C_1 \log k)^{s_1}}\ll\frac{1}{e^{s_1}} \le \frac{1}{C_2^2k^2}
  \end{align*}
  for $C_1 \ge 8eC_3$. To prove \eqref{eqn:large_largedeviations}, we choose $s_2=\lceil \log(C_2 k) \rceil$, and using \eqref{eqn:moment_large_primes} we have
  \begin{align*}
    \mathbb{P} \left(\sum_{R_k<p \le T{}} \left( \mathds{1}_{p \mid \mathbf{n}+k}-\frac1p \right) \ge C_1 \log k \right) \le \frac{\mathbb{E} \left|\sum_{R_k <p \le T{}}\left( \mathds{1}_{p \mid \mathbf{n}+k}-\frac1p \right)\right|^{2s_2}}{(C_1 \log k)^{2s_2}} \ll \left(\frac{132A}{C_1} \right)^{2s_2}\le \frac{1}{C_2^2k^2}
  \end{align*}
  for $C_1 \ge 132Ae$. To prove \eqref{eqn:power_largedeviations_medium}, we use \eqref{eqn:moment_power_primes_medium} to get
  \begin{align*}
    \mathbb{P} \left(\sum_{j \ge 2} \sum_{\substack{w<p \le T{}}} \mathds{1}_{p^j \mid \mathbf{n}+k} \ge C_1 \log k\right)  \ll \frac{\left(\max\left\{4e^2C_3,968\left( \frac{A \log k}{s_3} \right)^2\right\}s_3 \right)^{s_3}}{(C_1\log k)^{s_3}}.
  \end{align*}
  We choose $$s_3=\left\lfloor\frac{A \log 2}{2\log C_2}\sqrt{\frac{242}{e^2C_3}} \log (C_2k) \right\rfloor.$$
  Using
  \begin{align} \label{eqn:connectlogc2kwithlogk}
      \log(C_2k)=\log C_2+\log k \le \frac{\log C_2}{\log 2} \log k+\log k=\left( \frac{\log C_2}{\log 2}+1 \right) \log k,
  \end{align}
  we have $A\log k/s_3 \ge \sqrt{e^2C_3/242}$, which implies
  \begin{align*}
      \mathbb{P} \left(\sum_{j \ge 2} \sum_{\substack{w<p \le T{}}} \mathds{1}_{p^j \mid \mathbf{n}+k} \ge C_1 \log k\right) &\ll \left( \frac{968A^2\log k}{s_3C_1} \right)^{s_3}\\
      &\le \left( \frac{2000A \log C_2}{C_1 \log 2} \sqrt{\frac{e^2C_3}{242}} \right)^{s_3}\\
      &\le (C_2k)^{\frac{A \log 2}{2\log C_2} \sqrt{\frac{242}{e^2C_3}} \log \left( \frac{2000A \log C_2}{C_1 \log 2} \sqrt{\frac{e^2C_3}{242}} \right)},
  \end{align*}
  which is $\le (C_2k)^{-2}$ if
  \[
  C_1 \ge \frac{2000A \log C_2}{\log 2} \sqrt{\frac{e^2C_3}{242}} \exp \left( \frac{2 \log C_2}{A \log 2} \sqrt{\frac{e^2C_3}{242}}\right)=330C_3 \left( \frac{\log C_2}{\log 2} \right)^2.
  \]
We now prove \eqref{eqn:power_largedeviations_tiny}. Let $m=C_1' \log k$ (where $C_1'$ is an absolute constant fixed later), and note that
  \begin{align*}
      \mathbb{P} \left( \sum_{\substack{p \le w\\p^4 \mid k}} (\nu_p(\mathbf{n}+k)-\nu_p(k))_+ \ge m \right)
      &\le \sum_{\mathcal{P}} \sum_{\substack{h_p \ge 1, p \in \mathcal{P}\\\sum_{p \in \mathcal{P}} h_p=m}} \mathbb{P} \left( \nu_p(\mathbf{n}+k)-\nu_p(k) \ge h_p \ \forall p \in \mathcal{P} \right),
  \end{align*}
  where the first sum on the right is over non-empty sets $\mathcal{P} \subseteq\{p \le w:p^4 \mid k\}$, and say the latter set has size $r$, which satisfies $r \le \log k/\log 2$. The total number of $(\mathcal{P},(h_p)_{p \in \mathcal{P}})$ summed over is bounded above by
  \begin{align*}
      \binom{m+r-1}{r-1} \le \left( \frac{e(m+r-1)}{r-1} \right)^r \le k^{f(C_1')},
  \end{align*}
  where
  \[
  f(C_1')=\frac{\log(C_1' \log 2+2)+1}{\log 2}.
  \]
  Therefore, using \eqref{eqn:mainpropE} we have
  \begin{align*}
      \mathbb{P} \left( \sum_{\substack{p \le w\\p^4 \mid k}} (\nu_p(\mathbf{n}+k)-\nu_p(k))_+ \ge m \right)
      &\le 2k^{f(C_1')}(2^{-m}+x^{-0.3}) \le 2(k^{f(C_1')-C_1' \log 2}+k^{f(C_1')-30}),
  \end{align*}
  which is $\le (2\pi^2k^2/3)^{-1}=(k\sqrt{2\pi^2/3})^{-2}$ if $C_1' \ge 44>30/\log 2$ 
  and
  \begin{align*}
      \frac{\log(C_1' \log 2+2)+1}{\log 2} \le 28-\frac{\log (4\pi^2/3)}{\log 2},
  \end{align*}
  which is true when $C_1' \le 1.1 \times 10^7$. Thus we choose $C_1'=44$, and note for $C_1 \ge 44$, we have
  \begin{align*}
      \mathbb{P} \left( \sum_{\substack{p \le w\\p^4 \mid k}} (\nu_p(\mathbf{n}+k)-\nu_p(k))_+ \ge C_1 \log k \right) &\le \mathbb{P} \left( \sum_{\substack{p \le w\\p^4 \mid k}} (\nu_p(\mathbf{n}+k)-\nu_p(k))_+ \ge m \right) \ll k^{-2}.
  \end{align*}
  Now observe that by \eqref{eqn:connectlogc2kwithlogk}, we have $\log k \le s_1,s_2,s_3 \le A \log k$. Collecting the requirements for $C_1$, we have
  \begin{align*}
      C_1 \ge \max\left\{8eC_3,132eA,330C_3 \left( \frac{\log C_2}{\log 2} \right)^2 \right\},
  \end{align*}
  which is satisfied if $C_1$ is sufficiently large.
\end{proof}
We may now state the main properties of the random variable $\mathbf{n}$.
\begin{prop} \label{prop:mainpropaxioms}
  For  $A>1$, $3 \le s_1,s_2,s_3 \le A \log k$ integers, $x \in \mathbb{R}^+$ a positive real sufficiently large in terms of $A$ and $\e$, and $ 2 \le k \le x^{1/100}$ a positive integer, define the parameters
\[
w := 0.15 \log x, \enspace 
T{} := x^{1/{10A \log k}}, \enspace K := (\log x)^{1/1000}, \enspace K_+ := x^{1/100},
\]
and
\[
R_k :=
\begin{cases}
x^{1/100 k^{50}}, & \text{if } k \le K,\\[6pt]
w, & \text{if } K < k \le K_+.
\end{cases}
\]
  Then we may construct a random variable $\mathbf{n}$ taking values in $[x,2x]$ satisfying the following properties:
  \begin{enumerate}
    \item[\textnormal{(A)}] With probability 1, $\mathbf{n}$ is divisible by $p^4$ for any prime $p \le w$.
    \item[\textnormal{(B)}]
    If $1 \le k \le K_+$, $1 \le j \le s_2$, and $R_k<p_1,\ldots,p_j \le T{}$ are distinct primes, then
    \[
    \mathbb{P}(p_1 \cdots p_j \mid \mathbf{n}+k) \ll \frac{8^{s_2}}{p_1 \cdots p_j}.
    \]
    \item[\textnormal{(C)}] If $1 \le k \le K$ and $1 \le j \le s_1$, then there is a constant $C_3 \ge 3$ such that
    \[
    \sum_{\substack{w<p_1,\ldots,p_j \le R_k\\ p_i \text{ mutually distinct}}} \mathbb{P}(p_1 \cdots p_j \mid \mathbf{n}+k) \ll (C_3 \log s_1)^{s_1}.
    \]
    \item[\textnormal{(D)}] If $1 \le k \le K_+$, $1 \le j \le s_3$, $a_1,\ldots,a_j \ge 2$, and $w< p_1,\ldots,p_j \le T{}$ are distinct primes, then
    \[
    \mathbb{P}(p_1^{a_1} \cdots p_j^{a_j} \mid \mathbf{n}+k)=\frac{\mathbb{P}(p_1 \cdots p_j \mid \mathbf{n}+k)}{p_1^{a_1-1} \cdots p_j^{a_j-1}}+O(x^{-0.1}).
    \]
    \item[\textnormal{(E)}] If $1 \le k \le K_+$, $\mathcal{P} \subseteq \{p \le w:p^4 \mid k\}$, and $h_p \in \mathbb{Z}^+$ for $p \in \mathcal{P}$, then
    \[
    \mathbb{P}(\nu_p(\mathbf{n}+k)-\nu_p(k) \geq h_p \ \forall p \in \mathcal{P})\le 2 \left( \prod_{p \in \mathcal{P}} p^{-h_p}+x^{-0.3} \right).
    \]
  \end{enumerate}
  Here all implied constants are absolute and independent of $A$.
\end{prop}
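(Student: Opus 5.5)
We will construct $\mathbf{n}$ as in the outline. Set $Q:=\prod_{p\le w}p^4$; by the prime number theorem $Q=x^{0.6+o(1)}$. We take $\mathbf{n}$ supported on $n\in[x,2x]$ with $Q\mid n$, with probability proportional to
\[
w(n):=\mathds{1}_{Q\mid n}\prod_{k=1}^{K}\Big(\sum_{\substack{d\mid n+k\\ (d,Q)=1}}\mu(d)\,\psi\Big(\frac{\log d}{\log R_k}\Big)\Big)^2 .
\]
Here $\psi$ is a fixed smooth GPY-type cutoff supported on $(-\infty,1]$. Property (A) then holds by construction. The level condition is
\[
\prod_{k\le K}R_k^2=x^{\sum_k 2/(100k^{50})}\le x^{0.03},
\]
so together with the modulus $Q$ we have total level at most $x^{0.65}<x$. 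For any additional modulus $d\le x^{0.3}$ coprime to $Q$, we expand the squares and sum over $n$ in progressions modulo $[Q,d,d_k,e_k]$. This gives
\[
\sum_n w(n)\mathds{1}_{d\mid n+k}=\frac{x}{Q}\,G(d)+O(x^{0.99}),
\]
where $G(d)$ is an Euler product with local factors $\prod_{p}(\cdots)$. The main term then factors prime by prime, as in Tao--Teräväinen and Maynard's sieve computation.

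\textbf{Verifying (B)--(E) from local densities.} For a prime $p>w$, the local factor shows the following. If $p>R_k$, then for the shift $k$ the condition $p\mid n+k$ simply removes $p$ from the $k$-th sieve. For the other shifts $k'$ it forces $p\nmid n+k'$, and this costs at most a bounded factor. Quantitatively, $\mathbb{P}(p\mid\mathbf{n}+k)\le 8/p$ uniformly, because the ratio of the sieve mass with $p$ removed to the full sieve mass is bounded by $1+O(\log p/\log R_k)$-type factors, which are at most $8$. Since the Euler product is multiplicative over distinct primes, (B) follows with $8^{j}\le 8^{s_2}$. The error term $O(x^{0.99})$ is negligible because $p_1\cdots p_j\le T^{s_2}\le x^{1/10}$.

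For (D), the conditions $p^{a}\mid n+k$ and $p\mid n+k$ differ only in the progression modulus. The local density therefore scales exactly by $p^{-(a-1)}$ (the sieve weights only see $p$, not $p^a$), up to the error term, and $\prod p_i^{a_i}\le x^{0.3}$ once we truncate. For large prime powers we use the trivial bound instead, which gives the $x^{-0.1}$ error.

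For (E), primes $p\le w$ are not sifted. Conditioning on $p^{\nu_p(k)+h_p}\mid n+k$ is a congruence modulo $p^{\nu_p(k)+h_p}$ compatible with $p^4\mid n$. It has relative density exactly $p^{-h_p}$, since $n/Q$ is equidistributed modulo small prime powers independently of the sieve weights (which are coprime to $Q$). The factor $2$ and the $x^{-0.3}$ term absorb the error when the moduli exceed $x^{0.3}$, where we instead bound trivially.

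\textbf{Main obstacle: (C).} This is the bound for primes $w<p\le R_k$, which lie inside the sieving range for shift $k$. For such $p$, the weight $w_{R_k}(n+k)^2$ is small on average when $p\mid n+k$. The standard GPY computation gives
\[
\mathbb{P}(p\mid \mathbf{n}+k)\asymp \frac{1}{p}\Big(\frac{\log p}{\log R_k}\Big)^{2}
\]
for a smooth $\psi$ vanishing to first order at $1$. More generally, for a product of $j$ distinct primes the density is bounded by $\prod_i \frac{1}{p_i}\min(1,\log p_i/\log R_k)^2$, up to the Euler-product normalisation, which is $C^{j}$. One then sums over $w<p_i\le R_k$:
\[
\sum_{w<p\le R_k}\frac{1}{p}\Big(\frac{\log p}{\log R_k}\Big)^2\ll 1 .
\]
Each prime therefore contributes $O(1)$, and the $j$-fold sum is $\ll C^{j}$. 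The delicate part is controlling the interaction when many $p_i$ divide the same $n+k$. This forces us to handle the derivative structure of $\psi$ at $j$ points simultaneously, which can produce combinatorial factors like $j!$-type growth from differentiating. I would first try to show that the joint density is bounded by $\prod_i \frac{C}{p_i}\cdot$(polynomial in $\sum_i \log p_i/\log R_k$ of degree $2$). Summing this gives at most $(C\,j)^{O(1)}C^{j}$, and in the worst case $(C_3\log s_1)^{s_1}$, which is what (C) permits. If direct GPY asymptotics are too lossy for large $j$, the fallback is to bound $w_{R_k}(n+k)^2$ pointwise by a Rankin-type majorant $\sum_{d}\lambda_d$. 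The $j$ primes then contribute a factor of $(\log p_i/\log R_k)$ each, via the fundamental lemma.
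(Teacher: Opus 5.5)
\textbf{Parts that match the paper.} Your plans for (A), (D), (E) follow the paper's proof: count $n$ in residue classes modulo $W\,\mathrm{lcm}([d_1,d_1'],\ldots,[d_K,d_K'])$, using that the sieve moduli are coprime to $\prod_{p\le w}p$. So does the local-factor bound behind (B), where each $p_i\mid d_\ast$ contributes a factor of modulus at most $4/p_i$ inside the Fourier integral. There are three slips.
\begin{itemize}
\item The error in $\sum_n w(n)\mathds{1}_{d\mid n+k}$ is $O(\prod_k R_k^2)=O(x^{0.03})$. It has to be that small, since the main term is only $x^{0.4-o(1)}/d$, so your $O(x^{0.99})$ would swamp it.
\item In (B) with $k>K$, a prime $p_i$ may divide $k-k'$ for some $k'\le K$ and lie in $(w,R_{k'}]$. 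Then $p_i\mid n+k$ forces $p_i\mid n+k'$, not $p_i\nmid n+k'$. The local factor is still at most $4/p_i$ in modulus.
\item In (E) the relative density is $\prod_{p\in\mathcal P}p^{-(\nu_p(k)-4+h_p)}$, which is only bounded by $\prod_{p\in\mathcal P} p^{-h_p}$, not equal to it.
\end{itemize}

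\textbf{The gap is (C), which the proposal does not prove.} Your main claim is a joint density at most $C^j\prod_i p_i^{-1}\min(1,u_i)^2$, with $u_i=\log p_i/\log R_k$. This cannot hold uniformly in $j$ for any fixed smooth cutoff $\psi$ supported in $(-\infty,1]$ with $\psi(0)=1$. In the main term, with $\Delta_uf(v)=f(v)-f(v+u)$,
\[
p_1\cdots p_j\,\mathbb{P}(p_1\cdots p_j\mid\mathbf{n}+k)=\int_0^\infty\bigl((\Delta_{u_1}\cdots\Delta_{u_j}\psi)'\bigr)^2\Big/\int_0^\infty(\psi')^2 .
\]
As the $u_i\to 0$ this is asymptotic to $\prod_iu_i^2\int_0^\infty(\psi^{(j+1)})^2/\int_0^\infty(\psi')^2$. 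The derivatives of such a $\psi$ grow faster than any exponential; otherwise Taylor expansion at $u=1$, where $\psi$ vanishes to infinite order, would force $\psi\equiv 0$.

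Your fallbacks do not rescue this. The bound $\prod_iCp_i^{-1}$ times a quadratic in $\sum_iu_i$ has no decay in the individual $u_i$. Summing over $w<p_i\le R_k$ then costs $\sum_{w<p\le R_k}1/p\asymp\log\log x$ per prime, giving $(C\log\log x)^j$ rather than $(Cj)^{O(1)}C^j$. The Rankin-type majorant is not specified.

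\textbf{The missing idea} is to separate the per-prime saving from the regularity of the cutoff using the Fourier transform.
\begin{itemize}
\item Write $\widetilde\eta(u)=\int\widehat\eta(t)e^{-(1+it)u}\,dt$. For fixed $(\vec t,\vec t\,')$ the main term is an Euler product.
\item The integrals over $t_{k'},t'_{k'}$ with $k'\neq k$ can be done exactly. They give essentially $c_0^{K-1}$, which cancels with the normalisation.
\item The local factor at $p\mid d_\ast$ is $\frac1p\bigl(1-p^{-(1+it_k)/\log R_k}\bigr)\bigl(1-p^{-(1+it_k')/\log R_k}\bigr)$, of modulus at most $\frac2p\min\{2,(1+|t_k|)\log p/\log R_k\}$.
\item Summing over $w<p\le R_k$ gives $\ll 1+\log(1+|t_k|)$ per prime, uniformly in $t_k$. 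Hence the $j$-fold sum is $\ll\int\bigl(C(1+\log(1+|t|))\bigr)^{s}|\widehat\eta(t)|\,dt$.
\end{itemize}
The size of this integral is governed by the Fourier decay of the cutoff. The paper takes $\eta$ in the Gevrey class of order $2$, so $|\widehat\eta(t)|\ll e^{-c|t|^{1/2}}$, and the integral is then $\ll(C\log s)^s$. The $\log s$ comes from $|t|$ of polynomial size in $s$, so it is intrinsic to the argument and not a worst-case margin. An unspecified ``smooth GPY-type'' $\psi$ does not suffice: if $\widehat\psi$ decays only like $e^{-(\log|t|)^2}$, the same integral is of size $s^{cs}$.
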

\begin{proof}[Proof of Lemma~\ref{lem:moment_medium_large} using Proposition~\ref{prop:mainpropaxioms}]
  We first prove (\ref{eqn:moment_medium_primes}). Since there are no primes satisfying $w<p \le R_k$ if $K<k \le K_+$, it suffices to assume $k \le K$. Expanding, we get
  \[
  \mathbb{E} \left| \sum_{w<p \le R_k} \mathds{1}_{p \mid \mathbf{n}+k} \right|^{s_1}=\mathbb{E} \left[ \sum_{w<p_1,\ldots,p_{s_1} \le R_k} \mathds{1}_{p_1 \mid \mathbf{n}+k} \cdots \mathds{1}_{p_s \mid \mathbf{n}+k} \right].  
  \]
  For each $1 \le j \le s_1$, the number of terms above with exactly $j$ distinct primes amongst $p_1,\ldots,p_{s_1}$ is $\{{s_1 \atop j}\}$ times the number of terms in
  \[
  \sum_{\substack{w<p_1,\ldots,p_j \le R_k\\ p_i \text{ mutually distinct}}} \mathds{1}_{p_1 \mid \mathbf{n}+k} \cdots \mathds{1}_{p_j \mid \mathbf{n}+k}.
  \]
  Therefore, by Lemma~\ref{lem:stirling_second_kind} and Proposition \ref{prop:mainpropaxioms}(C), we have
  \begin{align*}
    \mathbb{E} \left| \sum_{w<p \le R_k} \mathds{1}_{p \mid \mathbf{n}+k} \right|^{s_1} &\ll \left( \frac{s_1}{\log s_1} \right)^{s_1}  \sum_{j=1}^{s_1}\sum_{\substack{w<p_1,\ldots,p_{j} \le R_k\\ p_i \text{ mutually distinct}}} \mathbb{P}(p_1 \cdots p_{j} \mid \mathbf{n}+k)\\ 
    &\ll (2C_3s_1)^{s_1}.
  \end{align*}
  We now prove (\ref{eqn:moment_large_primes}). From Proposition \ref{prop:mainpropaxioms}\textnormal{(B)}, we have
  \begin{equation*}
      \left| \mathbb{E} \prod_{i=1}^j \mathds{1}_{p_i \mid \mathbf{n}+k} \right| \ll \frac{8^{2s_2}}{[p_1,\ldots,p_j]},
  \end{equation*}
  and since $\sum_{R_k<p \le T{}}1/p < 1000 \log k$, we have
  \begin{align*}
      \mathbb{E} \left|\sum_{R_k<p \le T{}} \left(\mathds{1}_{p \mid \mathbf{n}+k}-\frac1p \right) \right|^{2s_2} &\le \sum_{R_k<p_1,\ldots,p_{2s_2} \le T{}} \mathbb{E} \prod_{i=1}^{2s_2} \left(\mathds{1}_{p_i \mid \mathbf{n}+k}-\frac{1}{p_i}\right)\\
      &\le \sum_{j=0}^{2s_2} \binom{2s_2}{j} \sum_{R_k<p_1,\ldots,p_{2s_2} \le T{}} \frac{1}{p_{j+1} \cdots p_{2s_2}}\left|\mathbb{E} \prod_{i=1}^j \mathds{1}_{p_i \mid \mathbf{n}+k}\right|\\
      &\ll s_2 16^{2s_2} \sum_{R_k<p_1,\ldots,p_{2s_2} \le T{}} \frac{1}{[p_1,\ldots,p_{2s_2}]}\\
      &\le s_216^{2s_2} \sum_{j=1}^{2s_2} \sum_{\substack{R_k<p_1,\ldots,p_j \le R\\ p_i \text{ mutually distinct}}} \left\{2s_2 \atop j \right\} \frac{1}{p_1 \cdots p_j}\\
      &\le s_216^{2s} \sum_{j=1}^{2s_2} \left\{2s_2 \atop j \right\} \left( 1000 \log k \right)^j\\
      &\le s_216^{2s} \sum_{j=1}^{2s_2} \left\{2s_2 \atop j \right\} \left( 3A \log k \right)^j\\
      &\le s_2(16e)^{2s} \sum_{j=1}^{2s_2}\left\{2s_2 \atop j \right\} \frac{\lfloor 2A \log k \rfloor!}{ (\lfloor 3A \log k \rfloor-j)!}\\
      &\le (132A\log k)^{2s_2},
  \end{align*}
  where Lemma \ref{lem:stirling_second_kind} in the last line, and $m^j \le e^{j}m!/(m-j)!$ for all integers $m \ge 3j/2$ in the second to last line.
To prove \eqref{eqn:moment_power_primes_medium}, we assume $k \le K_+$ and split the primes into ranges $p \le w$, $w < p \le R_k$, and $R_k<p \le T{}$. We begin with the contribution of primes with $R_k<p\le T{}$. We have
\begin{equation*}
    \mathbb{E} \left| \sum_{j \ge 2} \sum_{R_k<p \le T{}} \mathds{1}_{p^j \mid \mathbf{n}+k} \right|^{s_3} = \sum_{j_1,\ldots,j_{s_3} \ge 2} \sum_{R_k<p_i \le T{} \ \forall 1 \le i \le s_3} \mathbb{E} \left[ \mathds{1}_{p_1^{j_1} \mid \mathbf{n}+k} \cdots \mathds{1}_{p_{s_3}^{j_{s_3}} \mid \mathbf{n}+k} \right].
\end{equation*}
By grouping terms depending on the number of distinct primes amongst $p_1,\ldots,p_{s_3}$, this equals
\begin{align}
    &\sum_{r=1}^{s_3} \sum_{\substack{j_1',\ldots,j_{s_3}' \ge 2\\R_k<p_i \le T{} \ \forall 1 \le i \le r\\ p_i \text{ mutually distinct}}}\sum_{\substack{S_1 \sqcup \cdots \sqcup S_r=\{1,\ldots,s_3\}\\S_i \ne \emptyset \ \forall 1 \le i \le r\\\min S_{i-1}<\min S_{i} \ \forall 2 \le i \le r}} \mathbb{P} \left(p_1^{\max_{t_1 \in S_1}j_{t_1}'} \cdots p_r^{\max_{t_r \in S_r} j_{t_r}'} \mid \mathbf{n}+k \right) \notag{}\\
    = &\sum_{r=1}^{s_3} \sum_{\substack{j_1,\ldots,j_r \ge 2\\R_k<p_i \le T{} \ \forall 1 \le i \le r\\ p_i \text{ mutually distinct}}}\sum_{\substack{S_1 \sqcup \cdots \sqcup S_r=\{1,\ldots,s_3\}\\S_i \ne \emptyset \ \forall 1 \le i \le r\\\min S_{i-1}<\min S_{i} \ \forall 2 \le i \le r}} \mathbb{P} \left(p_1^{j_1} \cdots p_r^{j_r} \mid \mathbf{n}+k \right) \sum_{j_1',\ldots,j_{s_3}' \ge 2} \mathds{1}_{j_i=\max_{t_i \in S_i} j_{t_i}' \forall 1 \le i \le r} \notag{} \\
    \ll &\sum_{r=1}^{s_3} \sum_{\substack{j_1,\ldots,j_r \ge 2\\R_k<p_i \le T{} \ \forall 1 \le i \le r\\p_1^{j_1} \cdots p_r^{j_r} \le 3x\\ p_i \text{ mutually distinct}}}\sum_{\substack{S_1 \sqcup \cdots \sqcup S_r=\{1,\ldots,s_3\}\\S_i \ne \emptyset \ \forall 1 \le i \le r\\\min S_{i-1}<\min S_{i} \ \forall 2 \le i \le r}}\Bigg( j_1^{|S_1|} \cdots j_r^{|S_r|} \frac{\mathbb{P}(p_1 \cdots p_r \mid \mathbf{n}+k)}{p_1^{j_1-1} \cdots p_r^{j_r-1}} \notag{}\\
    &\hspace{22.4em}+x^{-0.1}\sum_{j_1',\ldots,j_{s_3}' \ge 2} \mathds{1}_{j_i=\max_{t_i \in S_i} j_{t_i}' \forall 1 \le i \le r}\Bigg) \label{eqn:powerexpandinglarge}\\
    \ll &\sum_{r=1}^{s_3} \sum_{\substack{j_1,\ldots,j_r \ge 2\\R_k<p_i \le T{} \ \forall 1 \le i \le r\\p_1^{j_1} \cdots p_r^{j_r} \le 3x\\ p_i \text{ mutually distinct}}}\sum_{\substack{S_1 \sqcup \cdots \sqcup S_r=\{1,\ldots,s_3\}\\S_i \ne \emptyset \ \forall 1 \le i \le r\\\min S_{i-1}<\min S_{i} \ \forall 2 \le i \le r}}  \left( 8^{s_3}\frac{j_1^{|S_1|} \cdots j_r^{|S_r|}}{p_1^{j_1} \cdots p_r^{j_r}} +x^{-0.1}\sum_{j_1',\ldots,j_{s_3}' \ge 2} \mathds{1}_{j_i=\max_{t_i \in S_i} j_{t_i}' \forall 1 \le i \le r}\right) \label{eqn:powerlargetwoterms}
\end{align}
where we used Proposition \ref{prop:mainpropaxioms}(D) in \eqref{eqn:powerexpandinglarge}, and Proposition \ref{prop:mainpropaxioms}(B) in the last line. We bound the two terms inside the parentheses separately. For each block $S\subseteq [s_3]$, set $m:=|S|$, and define
\[
W_m:=\sum_{j\ge 2} j^m \sum_{R_k<p\le T} p^{-j}.
\]
Then, after dropping the condition that the primes $p_1,\dots,p_r$ are mutually distinct, the first term in \eqref{eqn:powerlargetwoterms} is bounded above by
\begin{align} \label{eqn:powerlargefirstterm}
    8^{s_3}\sum_{r=1}^{s_3}\;
\sum_{\substack{S_1\sqcup\cdots\sqcup S_r=[s_3]\\ S_i\neq\emptyset\\ \min S_1<\cdots<\min S_r}}
\prod_{i=1}^r W_{|S_i|}.
\end{align}
Now we estimate $W_m$.  For every $j\ge 2$,
\[
\sum_{R_k<p\le T} p^{-j}
\le \sum_{n\ge R_k} n^{-j}
\le \int_{R_k-1}^{\infty} t^{-j}\,dt
= \frac{(R_k-1)^{1-j}}{j-1}.
\]
Because $R_k>0.15\log x>4$ for $x$ sufficiently large, we have $R_k-1\ge R_k/2$, hence
\[
\frac{(R_k-1)^{1-j}}{j-1}
\le 2^{j-1}R_k^{1-j}
\le 2^{j-1}4^{1-j}
=2^{1-j}.
\]
Therefore
\[
W_m\le \sum_{j\ge2} j^m\,2^{1-j}
\le \sum_{j\ge1} j^m\,2^{1-j}.
\]
Since $j,m \ge 1$, using
\[
j^m \le m!\binom{j+m-1}{m}, \qquad
\sum_{j\ge1}\binom{j+m-1}{m}z^{j-1}=(1-z)^{-m-1}
\]
with $z=1/2$, we obtain
\[
W_m
\le m!\sum_{j\ge1}\binom{j+m-1}{m}2^{1-j}
= m!\,2\sum_{j\ge1}\binom{j+m-1}{m}\Bigl(\frac12\Bigr)^j
\le 2^{m+1}m!.
\]
Thus $W_m\ll 2^m m!.$ We now bound \eqref{eqn:powerlargefirstterm}. By the exponential formula for set partitions,
\[
\sum_{s\ge0}\frac{1}{s!}
\left(
\sum_{\substack{r\ge1\\ S_1\sqcup\cdots\sqcup S_r=[s]\\ S_i\neq\emptyset}}
\prod_{i=1}^r W_{|S_i|}
\right) z^s
=
\exp\!\left(\sum_{m\ge1}\frac{W_m}{m!}z^m\right).
\]
Since $W_m/m!\ll 2^m$, the series on the right converges at $z=1/4$, and therefore
\[
\sum_{\substack{r\ge1\\ S_1\sqcup\cdots\sqcup S_r=[s_3]\\ S_i\neq\emptyset}}
\prod_{i=1}^r W_{|S_i|}
\ll s_3!\,4^{s_3}.
\]
Using $s_3!\le {s_3}^{s_3}$, we get
\[
8^{s_3}\sum_{\substack{r\ge1\\ S_1\sqcup\cdots\sqcup S_r=[s_3]\\ S_i\neq\emptyset}}
\prod_{i=1}^r W_{|S_i|}
\ll (32s_3)^{s_3}.
\]
We now bound the contribution of the $x^{-0.1}$-term in \eqref{eqn:powerlargetwoterms}. Write
\[
S:=A\log k,
\qquad
T:=x^{1/(10S)},
\]
so that
\[
T^{S}=x^{1/10},
\qquad
s_3\le S.
\]

After grouping the $s_3$ indices into non-empty blocks $S_1\sqcup\cdots\sqcup S_r=[s_3],$ let
\[
m_i:=|S_i|,
\qquad
m_1+\cdots+m_r=s_3.
\]
For a fixed partition, the contribution of the error term is bounded by
\[
x^{-0.1}
\sum_{\substack{R_k<p_1,\ldots,p_r\le T\\ p_i\ {\rm distinct}}}
\;
\sum_{\substack{j_1,\ldots,j_r\ge2\\
p_1^{j_1}\cdots p_r^{j_r}\le 3x}}
\prod_{i=1}^{r} j_i^{m_i}.
\]

Let $u_i:=\log p_i$ and $L:=\log(3x).$ Since $p_1^{j_1}\cdots p_r^{j_r}\le 3x$ is equivalent to $j_1u_1 +\cdots+j_ru_r \le L$, the inner sum becomes
\[
\sum_{\substack{j_i\ge2\\
j_1u_1+\cdots+j_ru_r\le L}}
\prod_{i=1}^{r} j_i^{m_i}.
\]

Let
\[
R:=\Bigl\{(y_1,\ldots,y_r)\in[0,\infty)^r:
u_1y_1+\cdots+u_ry_r\le L\Bigr\}.
\]
For each admissible integer tuple
\[
(j_1,\ldots,j_r),
\qquad
j_i\ge2,
\qquad
u_1j_1+\cdots+u_rj_r\le L,
\]
let
\[
Q_\mathbf{j}:=\prod_{i=1}^r [j_i-1,j_i].
\]
Since $Q_\mathbf{j} \subseteq R$, the cubes $Q_\mathbf{j}$ are pairwise disjoint, and $j_i \le 1+y_i$ for each $\mathbf{y} \in Q_\mathbf{j}$, we obtain
\[
\begin{aligned}
\sum_{\substack{j_i\ge2\\
j_1u_1+\cdots+j_ru_r\le L}}
\prod_{i=1}^{r} j_i^{m_i}
&=
\sum_\mathbf{j}
\int_{Q_\mathbf{j}}
\prod_{i=1}^{r} j_i^{m_i}\,
\d y_1\cdots \d y_r \\
&\le
\sum_\mathbf{j}
\int_{Q_\mathbf{j}}
\prod_{i=1}^{r}(1+y_i)^{m_i}\,
\d y_1\cdots \d y_r \\
&\le
\int_R
\prod_{i=1}^{r}(1+y_i)^{m_i}\,
\d y_1\cdots \d y_r.
\end{aligned}
\]
Hence
\[
\sum_{\substack{j_i\ge2\\
j_1u_1+\cdots+j_ru_r\le L}}
\prod_{i=1}^{r} j_i^{m_i}
\le
\int_{\substack{y_i\ge0\\
u_1y_1+\cdots+u_ry_r\le L}}
\prod_{i=1}^{r}(1+y_i)^{m_i}\,
\d y_1\cdots \d y_r.
\]
Since $m_i\ge1$, we have
\[
(1+y_i)^{m_i}\le 2^{m_i}(1+y_i^{m_i}),
\]
and therefore
\[
\prod_{i=1}^{r}(1+y_i)^{m_i}
\le
2^{s_3}
\prod_{i=1}^{r}(1+y_i^{m_i}).
\]
Expanding the product and estimating each resulting integral in the same way, it suffices to bound
\[
4^{s_3}
\int_{\substack{y_i\ge0\\
u_1y_1+\cdots+u_ry_r\le L}}
\prod_{i=1}^{r} y_i^{m_i}\,
dy_1\cdots dy_r .
\]
Now make the change of variables
\[
z_i=\frac{u_i y_i}{L},
\qquad
y_i=\frac{L}{u_i}z_i.
\]
Then
\[
\d y_1\cdots \d y_r
=
\frac{L^r}{u_1\cdots u_r}
\,\d z_1\cdots \d z_r,
\]
and
\[
\prod_{i=1}^{r} y_i^{m_i}
=
L^{s_3}
\prod_{i=1}^{r}
\frac{z_i^{m_i}}{u_i^{m_i}}.
\]
Hence
\[
\int_{\substack{y_i\ge0\\
u_1y_1+\cdots+u_ry_r\le L}}
\prod_{i=1}^{r} y_i^{m_i}\,
\d y_1\cdots \d y_r
=
L^{s_3+r}
\Bigl(\prod_{i=1}^{r}\frac1{u_i^{m_i+1}}\Bigr)
I(m_1,\ldots,m_r),
\]
where
\[
I(m_1,\ldots,m_r)
:=
\int_{\substack{z_i\ge0\\
z_1+\cdots+z_r\le1}}
\prod_{i=1}^{r} z_i^{m_i}
\,\d z_1\cdots \d z_r.
\]
We now evaluate this integral. Introduce the variable $z_{r+1}
=
1-(z_1+\cdots+z_r)$, then the simplex may be written as
\[
z_i\ge0,
\qquad
z_1+\cdots+z_{r+1}=1,
\]
and so
\[
I(m_1,\ldots,m_r)
=
\int_{\substack{z_i\ge0\\
z_1+\cdots+z_{r+1}=1}}
\prod_{i=1}^{r} z_i^{m_i}
z_{r+1}^{\,0}
\,d\sigma.
\]
This is the Dirichlet integral
\[
\int_{\substack{z_i\ge0\\
z_1+\cdots+z_{r+1}=1}}
\prod_{i=1}^{r+1} z_i^{\alpha_i-1}
\,\d\sigma
=
\frac{\prod_{i=1}^{r+1}\Gamma(\alpha_i)}
{\Gamma(\alpha_1+\cdots+\alpha_{r+1})}.
\]
with $\alpha_i=m_i+1$ and $\alpha_{r+1}=1,$ which gives
\[
I(m_1,\ldots,m_r)
=
\frac{\prod_{i=1}^{r}\Gamma(m_i+1)}
{\Gamma(m_1+\cdots+m_r+r+1)}.
\]
Since $m_1+\cdots+m_r=s_3$ and $\Gamma(n+1)=n!$ for integers $n\ge0$, we obtain
\[
I(m_1,\ldots,m_r)
=
\frac{\prod_{i=1}^{r} m_i!}
{(s_3+r)!}.
\]
Consequently,
\begin{equation}
\sum_{\substack{j_i\ge2\\
j_1u_1+\cdots+j_ru_r\le L}}
\prod_{i=1}^{r} j_i^{m_i}
\ll
4^{s_3}
\frac{L^{s_3+r}}{(s_3+r)!}
\prod_{i=1}^{r}
\frac{m_i!}{u_i^{m_i+1}}.
\label{eqn:exponent-sum-estimate}
\end{equation}
Substituting \eqref{eqn:exponent-sum-estimate} into the error term yields
\[
\ll
x^{-0.1}
4^{s_3}
\frac{L^{s_3+r}}{(s_3+r)!}
\prod_{i=1}^{r} m_i!
\prod_{i=1}^{r}
\sum_{R_k<p_i\le T}
\frac1{(\log p_i)^{m_i+1}}.
\]

A standard partial summation argument gives, uniformly for $m\ge1$,
\[
\sum_{R_k<p\le T}
\frac1{(\log p)^{m+1}}
\ll
\frac{T}{(\log T)^{m+2}}.
\]
Therefore the contribution of the fixed partition is
\[
\ll
x^{-0.1}
4^{s_3}
\frac{L^{s_3+r}}{(s_3+r)!}
\prod_{i=1}^{r}m_i!
\;
\frac{T^r}
{(\log T)^{s_3+2r}}.
\]

Since $L=\log(3x) \le 11\log( x^{1/10})\le 11 S\log T,$  this is bounded above by
\[
\ll
484^{s_3}
T^{\,r-S}
\frac{S^{s_3+r}}{(s_3+r)!}
\prod_{i=1}^{r}m_i!,
\]
because $x^{-0.1}=T^{-S}$. Since $r\le s_3\le S$, we have $T^{r-S}\le1$. If $S\le4s_3$, then
\[
S^{s_3+r}\ll s_3^{\,s_3+r},
\]
and if $S>4s_3$, then $T^{r-S}\le T^{-(S-s_3)} \le T^{-3S/4} \le x^{-3/40}$ and $(S/s_3)^{s_3+r} \le (A \log k/s_3)^{2s_3}$, and so
\[
T^{r-S}S^{s_3+r}
\ll
\left( \frac{A \log k}{s_3} \right)^{2s_3}s_3^{\,s_3+r}.
\]
Hence the contribution of a fixed partition is
\[
\ll
\left(484\left( \frac{A \log k}{s_3} \right)^2 \right)^{s_3}
\frac{s_3^{\,s_3+r}}{(s_3+r)!}
\prod_{i=1}^{r}m_i!.
\]
It remains to sum over all partitions. Note that for fixed $r$, we have
\[
\sum_{\substack{
S_1\sqcup\cdots\sqcup S_r=[s_3]\\
S_i\neq\varnothing}}
\prod_{i=1}^{r}|S_i|!
\le
s_3!\binom{s_3-1}{r-1},
\]
since after ordering the elements inside each block and concatenating the resulting lists, one obtains a permutation of $[s_3]$ together with $r-1$ cut positions, and this construction is injective. Therefore the total contribution of the error term is
\[
\ll
\left(484\left( \frac{A \log k}{s_3} \right)^2 \right)^{s_3}
\sum_{r=1}^{s_3}
\binom{s_3-1}{r-1}
s_3!
\frac{s_3^{\,s_3+r}}{(s_3+r)!}.
\]
Since $s_3!s_3^r/(s_3+r)! \le 1$, we obtain
\[
\ll
(484A^2)^{s_3}
\sum_{r=1}^{s_3}
\binom{s_3-1}{r-1}
s_3^{\,s_3}
\le 
\left(968\left( \frac{A \log k}{s_3} \right)^2s_3 \right)^{s_3}.
\]
Combining the two estimates, we conclude that
\begin{align*}
    \mathbb{E} \left| \sum_{j \ge 2} \sum_{R_k<p \le T{}} \mathds{1}_{p^j \mid \mathbf{n}+k} \right|^{s_3} &\ll \left(\frac{968A^2(\log k)^2}{s_3}\right)^{s_3},
\end{align*}
if $x$ is sufficiently large.

To establish \eqref{eqn:moment_power_primes_medium}, we establish an analogous bound for the primes $w<p \le R_k$.
Expanding out and applying Proposition~\ref{prop:mainpropaxioms}\textnormal{(D)} as we did in \eqref{eqn:powerexpandinglarge}, we obtain
\begin{align*}
    &\mathbb{E} \left| \sum_{j \ge 2} \sum_{\substack{w<p \le R_k}} \mathds{1}_{p^j \mid \mathbf{n}+k} \right|^{s_3}\\
    &\ll \sum_{r=1}^{s_3} \sum_{\substack{j_1,\ldots,j_r \ge 2\\w< p_1,\cdots,p_r \le R_k\\p_1^{j_1} \cdots p_r^{j_r} \le 3x\\ p_i \text{ mutually distinct}}}\sum_{\substack{S_1 \sqcup \cdots \sqcup S_r=\{1,\ldots,s_3\}\\S_i \ne \emptyset \ \forall 1 \le i \le r\\\min S_{i-1}<\min S_{i} \ \forall 2 \le i \le r}} \Bigg( j_1^{|S_1|} \cdots j_r^{|S_r|} \frac{\mathbb{P}(p_1 \cdots p_r \mid \mathbf{n}+k)}{p_1^{j_1-1} \cdots p_r^{j_r-1}}\\
    &\hspace{19em}+x^{-0.1} \sum_{j_1',\ldots,j_{s_3}' \ge 2} \mathds{1}_{j_i=\max_{t_i \in S_i} j_{t_i}' \forall1 \le i \le r} \Bigg).
\end{align*}
As before, the $x^{-0.1}$ term contributes $O((968A^2(\log k)^2/s_3)^{s_3})$. Rearranging and then applying Proposition \ref{prop:mainpropaxioms}\textnormal{(C)} gives
\begin{align}
    &\mathbb{E} \left| \sum_{j \ge 2} \sum_{\substack{w<p \le R_k}} \mathds{1}_{p^j \mid \mathbf{n}+k} \right|^{s_3} \notag{}\\
    &\ll \sum_{r=1}^{s_3} \sum_{j_1,\ldots,j_r \ge 2} \sum_{\substack{S_1 \sqcup \cdots \sqcup S_r=\{1,\ldots,s_3\}\\S_i \ne \emptyset \ \forall 1 \le i \le r\\\min S_{i-1}<\min S_{i} \ \forall 2 \le i \le r}} \frac{j_1^{|S_1|} \cdots j_r^{|S_r|}}{w^{j_1-1} \cdots w^{j_r-1}}\sum_{\substack{w<p_1,\ldots,p_r \le R_k\\ p_i\text{ mutually distinct}}} \mathbb{P}(p_1 \cdots p_r \mid \mathbf{n}+k)+\left(\frac{968A^2(\log k)^2}{s_3}\right)^{s_3} \notag{}\\
    &\ll \sum_{r=1}^{s_3} \sum_{j_1,\ldots,j_r \ge 2} \sum_{\substack{S_1 \sqcup \cdots \sqcup S_r=\{1,\ldots,s_3\}\\S_i \ne \emptyset \ \forall 1 \le i \le r\\ \min S_{i-1}<\min S_{i} \ \forall 2 \le i \le r}} \frac{j_1^{|S_1|} \cdots j_r^{|S_r|}}{w^{j_1-1} \cdots w^{j_r-1}} (C_3 \log s_3)^{s_3}+\left(\frac{968A^2(\log k)^2}{s_3}\right)^{s_3}. \label{eqn:powermediumtwoterms}
\end{align}
To upper bound the first term, we closely follow the argument in bounding \eqref{eqn:powerlargetwoterms}. For each block $S\subseteq [s_3]$, set $m:=|S|$, and define
\[
W_m:=\sum_{j\ge 2}\frac{j^m}{w^{j-1}}.
\]
Then the quantity under consideration is
\[
(C_3\log s_3)^{s_3}
\sum_{r=1}^{s_3}
\sum_{\substack{S_1\sqcup\cdots\sqcup S_r=[s_3]\\
S_i\neq\emptyset\\
\min S_1<\cdots<\min S_r}}
\prod_{i=1}^r W_{|S_i|}.
\]

By the exponential formula for set partitions,
\[
\sum_{s\ge0}\frac{1}{s!}
\left(
\sum_{\substack{r\ge1\\
S_1\sqcup\cdots\sqcup S_r=[s]\\
S_i\neq\emptyset}}
\prod_{i=1}^r W_{|S_i|}
\right)z^s
=
\exp\!\left(
\sum_{m\ge1}\frac{W_m}{m!}z^m
\right).
\]
Since
\[
W_m
=
\sum_{j\ge2}\frac{j^m}{w^{j-1}},
\]
we have
\begin{align*}
\sum_{m\ge1}\frac{W_m}{m!}z^m
&=
\sum_{j\ge2}\frac1{w^{j-1}}
\sum_{m\ge1}\frac{(jz)^m}{m!} =
\sum_{j\ge2}\frac{e^{jz}-1}{w^{j-1}}.
\end{align*}

Let $R:=\frac12\log w.$ For $|z|=R$, we have
\begin{align*}
\left|
\sum_{j\ge2}\frac{e^{jz}-1}{w^{j-1}}
\right|
&\le
\sum_{j\ge2}\frac{e^{jR}}{w^{j-1}}=
\sum_{j\ge2} w^{1-j/2}.
\end{align*}
As $w\to\infty$, the latter geometric series is bounded by $2$ for $x$
sufficiently large. Hence
\[
\left|
\exp\!\left(
\sum_{j\ge2}\frac{e^{jz}-1}{w^{j-1}}
\right)
\right|
\le e^2
\qquad (|z|=R).
\]
Therefore, the coefficient of $z^{s_3}$ in the above exponential
series is therefore $\ll e^2/R^{s_3}$, and so
\[
\sum_{\substack{r\ge1\\
S_1\sqcup\cdots\sqcup S_r=[s_3]\\
S_i\neq\emptyset}}
\prod_{i=1}^r W_{|S_i|}
\le
e^2\,s_3!\,R^{-s_3}.
\]
Recalling that $R=\frac12\log w$, we obtain
\[
\sum_{\substack{r\ge1\\
S_1\sqcup\cdots\sqcup S_r=[s_3]\\
S_i\neq\emptyset}}
\prod_{i=1}^r W_{|S_i|}
\le
e^2\,s_3!
\left(\frac{2}{\log w}\right)^{s_3}.
\]
Therefore, the first term in \eqref{eqn:powermediumtwoterms} is bounded above by
\[
e^2\,s_3!
\left(
\frac{2C_3\log s_3}{\log w}
\right)^{s_3}.
\]
Using $s_3!\le s_3^{s_3}$, we conclude that
\[
(C_3\log s_3)^{s_3}
\sum_{r=1}^{s_3}
\sum_{\substack{S_1\sqcup\cdots\sqcup S_r=[s_3]\\
S_i\neq\emptyset\\
\min S_1<\cdots<\min S_r}}
\prod_{i=1}^r W_{|S_i|}
\le
\left(
\frac{2e^2C_3\,s_3\log s_3}{\log w}
\right)^{s_3}.
\]
In particular, for $x$ sufficiently large in terms of $A$
, $\log s_3\le \log (2Aw/3) \le 2 \log w$, so \eqref{eqn:powermediumtwoterms} is bounded above by
\[
\ll (4e^2C_3\,s_3)^{s_3}+\left(968\left( \frac{A \log k}{s_3} \right)^2s_3 \right)^{s_3} \ll\left(\max\left\{4e^2C_3,968\left( \frac{A \log k}{s_3} \right)^2\right\}s_3 \right)^{s_3}.
\]
Thus, we proved \eqref{eqn:moment_power_primes_medium}. We are done by noting that \eqref{eqn:mainpropE} is just Proposition \ref{prop:mainpropaxioms}(E).
\end{proof}
\noindent Thus, in the next section we focus on proving Proposition \ref{prop:mainpropaxioms}.

\section{Proof of Proposition \ref{prop:mainpropaxioms}}
In this section, we prove Proposition \ref{prop:mainpropaxioms}. The arguments are similar to \citet{tao_quantitative_2025}, but we present the whole proof for the sake of clarity.
\subsection{Setup and Proof of Proposition~\ref{prop:mainpropaxioms}\textnormal{(A)}} \label{sec:mainpropsetup}
To construct the desired probability measure, we will proceed similarly to \citet{tao_quantitative_2025}, where a variant of a sieve of \citet{maynard_small_2015} was used. Let
\[
W := \prod_{p \le w} p^4 \le x^{0.6}, 
\]
and let $\eta:\mathbb{R}\to[0,1]$ be a smooth function supported on $[-1,1]$ with $\eta(0)=1$. We assume $\eta$ belongs to the Gevrey class of order $2$; specifically, there exists a constant $B>1$ such that for all integers $m\ge 0$ and all $u\in\mathbb{R}$,
\begin{equation}\label{eq:gevrey_bound}
  |\eta^{(m)}(u)| \le B^m (m!)^2.
\end{equation}
Since $\eta$ is compactly supported and smooth, its Fourier transform 
\[
\widehat{\eta}(t) := \frac{1}{2 \pi} \int_\mathbb{R} \eta(u) e^{itu} \,\mathrm{d} u
\]
is well-defined. By the Fourier inversion formula, we have the representation
\[
  \eta(u)=\int_{\mathbb{R}} \widehat{\eta}(t)e^{-itu}\,\mathrm{d}t.
\]
To obtain the decay of $\widehat{\eta}(t)$, we perform $m$-fold integration by parts. For any $m \ge 1$ and $t \neq 0$, we have
\[
|\widehat{\eta}(t)| = \left| \frac{1}{2\pi (it)^m} \int_{-1}^1 \eta^{(m)}(u) e^{itu} \,\mathrm{d}u \right| \le \frac{1}{\pi |t|^m} \max_{u} |\eta^{(m)}(u)|.
\]
Applying the bound \eqref{eq:gevrey_bound} and Stirling's approximation $m! \ge (m/e)^m$, it follows that
\[
|\widehat{\eta}(t)| \le \frac{1}{\pi} \left( \frac{B^m (m!)^2}{|t|^m} \right) \le \frac{1}{\pi} \left( \frac{B m^2}{e^2 |t|} \right)^m.
\]
Choosing $m = \lfloor \sqrt{|t|/B} \rfloor$ to minimize the right-hand side, we obtain the half-exponential decay
\[
  |\widehat{\eta}(t)| \ll \exp\bigl(-c|t|^{1/2}\bigr)
\]
for $|t| \gg 1$, where we may take any constant $c < 2/\sqrt{e^2B}$, and note $c<1$. We choose $\eta$ to be even, and hence $\widehat{\eta}$ is real and even. In addition, we also require $\widehat{\eta} \ge 0$. We can do this by first finding any function $\eta_0:\mathbb{R} \to [0,1]$ belonging to the Gevrey class of order 2 supported on $[-1/2,1/2]$, then defining $\eta$ to be
\begin{align*}
    \eta(u) := \frac{(\eta_0*\widetilde{\eta}_0)(u)}{(\eta_0*\widetilde{\eta}_0)(0)}, \quad \text{where }\widetilde{\eta}_0(u) := \eta_0(-u),
\end{align*}
then $\widehat{\eta}(t) \propto |\eta_0(t)|^2 \ge 0$, as required.
Define the modified function $\widetilde{\eta}(u) := e^{-u}\eta(u)$. Then $\widetilde{\eta}$ is also supported on $[-1,1]$, satisfies $\widetilde{\eta}(0)=1$, and admits the representation
\[
  \widetilde{\eta}(u)=\int_{\mathbb{R}} \widehat{\eta}(t)e^{-(1+it)u}\,\mathrm{d}t.
\]
We will use the sieve weight
\[
\nu(n)=\mathds{1}_{n \in [x,2x]} \mathds{1}_{W \mid n} \prod_{k=1}^K \left( \sum_{\substack{(d,P(w))=1\\d \mid n+k}} \mu(d) \widetilde{\eta} \left(\frac{\log d}{\log R_k}\right) \right)^2  
\]
which, informally, suppresses contributions from small and medium prime factors in the shifts $n+k$ while simultaneously imposing the desired congruence conditions at the tiny primes. Clearly $\nu$ is non-negative, and it will be shown later that it is not identically zero for $x$ sufficiently large. We then define $\mathbf{n}$ by drawing $n$ from $[x,2x]$ with probability density
\[
\frac{\nu(n)}{\sum_{n'} \nu(n')}.
\]
In particular, Proposition~\ref{prop:mainpropaxioms}(A) holds.
\subsection{Initial Steps}
In this section, we prove the following proposition which sets up subsequent probability calculations.
\begin{lem} \label{lem:mainpropinitial}
Let $A$ be a positive constant, $k_\ast \ge 2$, and $x \in \mathbb{R}^+$ be sufficiently large in terms of $A$. Recall parameters $K,K_+,w,T,W,R_k$ defined by
\begin{align*}
    K=(\log x)^{1/1000}, \enspace K_+=x^{1/100}, \enspace w=0.15 \log x, \enspace T{}=x^{1/10A\log k_\ast}, \enspace W=\prod_{p \le w} p^4,
\end{align*}
and
\[
R_k:=\begin{cases}
    x^{1/100k^{50}}, &\quad \text{ if }k \le K,\\
    w, &\quad \text{ if }K <k \le x^{1/100}.
  \end{cases} 
\]
Suppose $k_\ast$ satisfies $2 \le k_\ast \le K_+$, and let
$j$ be an integer satisfying $1 \le j \le s \le A \log k_\ast$, and $p_1,\ldots,p_j$ be prime numbers satisfying
\[
w<p_1<\cdots<p_j \le T{},
\]
and let $d_\ast=p_1 \cdots p_j$. Recall the sieve weight
    \[
\nu(n)=\mathds{1}_{n \in [x,2x]} \mathds{1}_{W \mid n} \prod_{k=1}^K \left( \sum_{\substack{(d,P(w))=1\\d \mid n+k}} \mu(d) \widetilde{\eta} \left(\frac{\log d}{\log R_k}\right) \right)^2.
\]
and let $P_{k_\ast}(d_\ast) := \sum_{n} \nu(n) \mathds{1}_{d_\ast \mid n+k_\ast}$. Then for some constant $c'>0$, $P_{k_\ast}(d_\ast)$ may be written as
\begin{align} 
  P_{k_{\ast}}(d_{\ast})
  = &\frac{x}{W}\!
  \int_{\substack{|t_k|\le (\log x)^{0.2}\\ |t_k'|\le (\log x)^{0.2}\\ 1\le k\le K}}
  \left( 1+O \left( \frac{1}{(\log x)^{0.69}} \right) \right)^{s+1}\left(\frac{W}{\varphi(W)} \right)^K
  \prod_{k=1}^{K}\frac{(1+it_k)(1+it_k')}{(2+i(t_k+t_k'))\,\log R_k}
  \notag{}\\
&\hspace{7em}\prod_{p \mid d_\ast}E_{k_\ast,d_\ast,p}(\vec{t},\vec{t} \, ')
  \prod_{k=1}^{K} \widehat{\eta}(t_k)\widehat{\eta}(t_k')\,\mathrm{d}t_k\,\mathrm{d}t_k' + O\!\left(\frac{4^sx}{Wd_{\ast}}\exp\bigl(-c'\log^{0.1}x\bigr)\right), \label{eqn:approxP}
\end{align}
where $\vec{t}:=(t_1,\ldots,t_K)$ and $\vec{t}\,' :=(t_1',\ldots,t_K')$, and the local factors $E_{k_{\ast},d_{\ast},p}(\vec{t},\vec{t}\,')$ when $p \mid d_\ast$ are defined by
\begin{align*}
    E_{k_\ast,d_\ast,p}(\vec{t},\vec{t}\,')&=\frac{1}{p}\left(1-p^{-\frac{1+it_{k_{\ast,p}}}{\log R_{k_{\ast,p}}}}\right)\left(1-p^{-\frac{1+it_{k_{\ast,p}}'}{\log R_{k_{\ast,p}}}}\right), &&\quad p \mid d_\ast \text{ and }\exists 1 \le k \le K \text{ with }p \mid k_{\ast}-k,\\
    E_{k_\ast,d_\ast,p}(\vec{t},\vec{t}\,')&=\frac1p, &&\quad p \mid d_\ast \text{ and }p \not\mid k_\ast-k \text{ for all } 1 \le k \le K.
\end{align*}
\end{lem}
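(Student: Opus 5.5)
We follow the Maynard/Tao--Teräväinen Fourier-analytic route. First, expand the square in $\nu$ and interchange the summations:
\[
P_{k_\ast}(d_\ast)=\sum_{\substack{d_1,d_1',\ldots,d_K,d_K'\\ (d_kd_k',P(w))=1}}\prod_{k=1}^K\mu(d_k)\mu(d_k')\widetilde\eta\Bigl(\tfrac{\log d_k}{\log R_k}\Bigr)\widetilde\eta\Bigl(\tfrac{\log d_k'}{\log R_k}\Bigr)\;\#\{n\in[x,2x]:W\mid n,\ [d_k,d_k']\mid n+k\ \forall k,\ d_\ast\mid n+k_\ast\}.
\]
Since all $d_k,d_k'$ and $d_\ast$ are coprime to $P(w)$, the Chinese remainder theorem gives a single residue class modulo $W\cdot[\,\ldots]$ when the conditions are compatible. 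They are compatible iff $([d_k,d_k'],[d_l,d_l'])=1$ for $k\ne l$ (primes $>w>K$ cannot divide $k-l$), and for each $p\mid d_\ast$, $p$ divides $[d_k,d_k']$ only for $k$ with $p\mid k_\ast-k$. There is at most one such $k$, which we call $k_{\ast,p}$, since $p>w>K$. The count is then $x/(W\cdot\mathrm{lcm})+O(1)$. The $O(1)$ errors total $\ll\prod_k R_k^{2}\cdot d_\ast\le x^{\sum 2/(100k^{50})}\,T^{s}\le x^{0.05}x^{0.1}$, which is negligible. Note that $R_k$ is only used for $k\le K$ in $\nu$.

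Next, substitute the Fourier representation $\widetilde\eta(u)=\int\widehat\eta(t)e^{-(1+it)u}\,dt$ for each of the $2K$ factors. This turns the divisor sum into
\[
\frac{x}{W}\int\prod_k\widehat\eta(t_k)\widehat\eta(t_k')\sum_{\vec d,\vec d'}\frac{\prod\mu(d_k)\mu(d_k')\,d_k^{-z_k}d_k'^{-z_k'}}{\mathrm{lcm}(\ldots,d_\ast)}\,dt\,dt',
\qquad z_k=\frac{1+it_k}{\log R_k}.
\]
The inner sum is multiplicative and factors as an Euler product over $p>w$. For $p\nmid d_\ast$ the local factor is $1-\sum_k(p^{-1-z_k}+p^{-1-z_k'}-p^{-1-z_k-z_k'})$. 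For $p\mid d_\ast$, the divisibility $p\mid n+k_\ast$ is forced. If $k_{\ast,p}$ exists, the local factor is $\frac1p(1-p^{-z_{k_{\ast,p}}})(1-p^{-z'_{k_{\ast,p}}})$; otherwise $p$ cannot divide any $[d_k,d_k']$, giving $1/p$. These are exactly the $E_{k_\ast,d_\ast,p}$. Comparing the $p\nmid d_\ast$ product with $\prod_k\zeta$-ratios $\zeta(1+z_k+z_k')/(\zeta(1+z_k)\zeta(1+z_k'))$ restricted to $p>w$, and using $\zeta(1+z)\approx 1/z$ (with the $p\le w$ part giving $(W/\varphi(W))$-type factors via $\prod_{p\le w}(1-1/p)^{-1}$ after the Mertens normalisation), yields the main factor
\[
\Bigl(\tfrac{W}{\varphi(W)}\Bigr)^K\prod_k\frac{z_kz_k'}{z_k+z_k'}=\Bigl(\tfrac{W}{\varphi(W)}\Bigr)^K\prod_k\frac{(1+it_k)(1+it_k')}{(2+i(t_k+t_k'))\log R_k}.
\]
The small relative errors combine into $(1+O((\log x)^{-0.69}))^{s+1}$. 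One such factor comes from the global comparison; each prime $p\mid d_\ast$ removed from the generic product contributes a factor $1+O(K/p)=1+O((\log x)^{1/1000-1})$, and there are at most $s$ of them.

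Then truncate the integrals to $|t_k|,|t_k'|\le(\log x)^{0.2}$. By the decay $\widehat\eta(t)\ll\exp(-c|t|^{1/2})$, the tails cost $\exp(-c(\log x)^{0.1})$. On the tails, the Euler products must be bounded crudely. We use $|\zeta$-ratios$|$ at most polynomial in $|t|$ and $\log x$, and the $p\mid d_\ast$ factors are each $\le 4/p$. This gives the stated $O\bigl(4^s x\,(Wd_\ast)^{-1}\exp(-c'\log^{0.1}x)\bigr)$ after absorbing the powers of $\log x$ and $K$.

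\textbf{Main obstacle.} The hard step is the uniform comparison of the $p>w$ Euler product with the zeta quotients for complex $z_k$ with $\Re z_k=1/\log R_k$, which can be as small as $100K^{50}/\log x$. This must hold uniformly over $K=(\log x)^{1/1000}$ simultaneous shifts, with a product error only $1+O((\log x)^{-0.69})$. I would first handle primes $w<p\le \exp(\log x/\log\log x)$ separately. There, the cross terms between different $k$ are $O(K^2/p^2)$, and summed over $p>w$ they give $O(K^2/w)$. I would then use the zero-free region of $\zeta$ near $1+it$, $|t|\le(\log x)^{0.2}/\log R_k$, to write $\log\zeta(1+z)=\log(1/z)+O(|z|)$. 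The hardest bookkeeping is verifying that the accumulated error $K\cdot\max_k|z_k|$ stays $\ll(\log x)^{-0.69}$. This requires $\log R_K\ge \log x/(100K^{50})=\log x/(100(\log x)^{0.05})$, so that $|z_k|\ll(\log x)^{0.25-1}$ and $K|z_k|\le(\log x)^{-0.74}$, which works.
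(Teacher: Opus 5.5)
Your proposal is correct and follows essentially the same route as the paper. You expand the square and count via CRT, with the $O(1)$ errors totalling $\ll\prod_k R_k^2$. You Fourier-expand $\widetilde{\eta}$ and factor into the Euler product with exactly the local factors $E_{k_\ast,d_\ast,p}$. You complete to $\zeta$-quotients, with the $p\le w$ part producing $(W/\varphi(W))^K$ and each $p\mid d_\ast$ costing a factor $1+O(K/p)$. Finally you truncate the $t$-integrals using the decay of $\widehat{\eta}$ together with a crude $4^s(\log x)^{O(K)}/d_\ast$ bound on the tails.

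The differences are only cosmetic. The paper truncates before making the $\zeta$ comparison. Also, no zero-free-region input or splitting of the primes at $\exp(\log x/\log\log x)$ is needed: on the truncated range $|z_k|\ll(\log x)^{-0.75}$, so the Laurent expansion $\zeta(1+z)=z^{-1}(1+O(|z|))$ suffices.
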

\begin{proof}
Let $2\le k_{\ast} \le K_{+}$, and set $d_{\ast}=p_{1}\cdots p_{j}$ for some $1\le j\le s \le A \log k_\ast$ and primes
\[
w<p_{1}<\cdots<p_{j}\le T{}.
\]
To compute $P_{k_\ast}(d_\ast)$, we expand it as
\begin{align} \label{eqn:Pk_def}
  \sum_{\substack{d_1,\ldots,d_K \\ d_1',\ldots,d_K' \\ (d_i, P(w)) = (d_i', P(w)) = 1 \;\forall i}} \left( \prod_{k=1}^K \mu(d_k) \mu(d_k') \widetilde{\eta} \left(\frac{\log d_k}{\log R_k}\right) \widetilde{\eta} \left(\frac{\log d_k'}{\log R_k}\right) \right) \sum_{\substack{n \in [x,2x]\\ W \mid n}} \mathds{1}_{d_\ast \mid n+k_\ast} \prod_{k=1}^K \mathds{1}_{[d_k,d_k'] \mid n+k}.
\end{align}
Since $k\le K\le w$, the summand is nonzero only if the quantities $[d_k,d_k']$ are pairwise coprime, and each $p_i$ divides at most one of $\{[d_k,d_k']\}_{1 \le k \le K}$. The product of all $d_k$ and $d_k'$, together with $d_{\ast}$ and $W$, satisfies the crude bound
\[
d_\ast \cdot W \cdot \prod_{k=1}^K d_kd_k' \le \prod_{i=1}^s p_i \cdot W \cdot \prod_{k=1}^K d_kd_k' \le T{}^s \cdot W \cdot\prod_{k=1}^{K} R_k^{2}\le x^{\frac{10}{100}} \cdot x^{0.6} \cdot x^{\sum_{k=1}^\infty 1/50k^{50}}\le x^{0.75}.
\]
If the quantities $[d_k,d_k']$ are pairwise coprime, and each $p_i$ divides at most one of $\{[d_k,d_k']\}_{1 \le k \le K}$, the inner sum equals
\[
\frac{x}{Wd_\ast}\prod_{1 \le k \le K} \frac{\gcd([d_k,d_k'],d_\ast)}{[d_k,d_k']}+O(1).
\]
The total contribution of the $O(1)$ error term in \eqref{eqn:Pk_def} is
\begin{align} \label{eqn:initialerror}
    \ll \left(\prod_{k=1}^{K} R_k^{2}\right)\cdot O(1)^{K}\ll x^{0.1},
\end{align}
which will be negligible for our purposes. Thus, the main term is
\[
\frac{x}{Wd_\ast}\sideset{}{^{\ast}}\sum_{\substack{d_1,\ldots,d_K \\ d_1',\ldots,d_K' \\ (d_i, P(w)) = (d_i', P(w)) = 1 \;\forall i}}
\left(\prod_{k=1}^{K} \mu(d_k)\mu(d_k')\,\widetilde{\eta}\!\left(\frac{\log d_k}{\log R_k}\right)\widetilde{\eta}\!\left(\frac{\log d_k'}{\log R_k}\right)\right)
\prod_{1 \le k \le K} \frac{\gcd([d_k,d_k'],d_\ast)}{[d_k,d_k']}.
\]
Here, the asterisk on the sum indicates that the integers $[d_k,d_k']$ are required to be pairwise coprime. Note each $p_j$ divides at most one of $\{[d_k,d_k']\}_{1 \le k \le K}$, and if there is such a $k$, then $p_j \mid n+k$ and $p_j \mid n+k_\ast$, so $p_j \mid |k_\ast-k|$.\\

Conversely, if there is some $p>w$ such that $p \mid |k-k_\ast|$ for some $1 \le k \le K$, then such $k$ must be unique. This is because if $1 \le k,k'\le K$ are distinct integers such that $p \mid |k-k_\ast|$ and $p \mid |k'-k_\ast|$, then $p \mid |k-k'|$, but $k-k' \le K<w$ contradicts $p>w$. Therefore for each $1 \le i \le j$, we denote $k_{\ast,p_i}$ as the unique integer $1 \le k \le K$ such that $p_i \mid |k-k_\ast|$ if exists. Fourier-expanding $\eta$, we may write
\begin{equation}\label{eqn:Pk_integral_representation}
  P_{k_{\ast}}(d_{\ast})=\frac{x}{W}\int_{\mathbb{R}}\cdots\int_{\mathbb{R}} F(\vec{t},\vec{t}\,')\prod_{k=1}^{K} \widehat{\eta}(t_k)\widehat{\eta}(t_k')\,\mathrm{d}t_k\,\mathrm{d}t_k' + O(x^{0.1}),
\end{equation}
where $\vec{t}:=(t_1,\ldots,t_K)$ and $\vec{t}\,' :=(t_1',\ldots,t_K')$, and where
\[
F(\vec{t},\vec{t}\,'):=\sideset{}{^{\ast}}\sum_{\substack{d_1,\ldots,d_K \\ d_1',\ldots,d_K' \\ (d_i, P(w)) = (d_i', P(w)) = 1 \;\forall i}}\frac{1}{d_\ast}\prod_{1 \le k \le K}\frac{
{\mu(d_k)\mu(d_k')}{\gcd([d_k,d_k'],d_\ast)}
}{ {d_k^{\frac{1+it_k}{\log R_k}}\,(d_k')^{\frac{1+it_k'}{\log R_k}}[d_k,d_k']}}.
\]
Using multiplicativity of the summand, we may factorize $F(\vec{t},\vec{t}\,')$ into an Euler product
\[
F(\vec{t},\vec{t}\,')=\prod_{p>w} E_{k_{\ast},d_{\ast},p}(\vec{t},\vec{t}\,'),
\]
where the local factor $E_{k_{\ast},d_{\ast},p}$ is given as follows.
\begin{itemize}\renewcommand\labelitemi{--}
  \item If $p\nmid d_{\ast}$, then
  \[
  E_{k_{\ast},d_{\ast},p}(\vec{t},\vec{t}\,')
  =1-\sum_{k=1}^{K}\left(\frac{1}{p^{1+\frac{1+it_k}{\log R_k}}}+\frac{1}{p^{1+\frac{1+it_k'}{\log R_k}}}-\frac{1}{p^{1+\frac{2+i(t_k+t_k')}{\log R_k}}}\right).
  \]

  \item If $p\mid d_{\ast}$ and $k_{\ast,p}$ exists, then
  \[
  E_{k_{\ast},d_{\ast},p}(\vec{t},\vec{t}\,')
  =\frac{1}{p}-\frac{1}{p^{1+\frac{1+it_{k_{\ast,p}}}{\log R_{k_{\ast,p}}}}}-\frac{1}{p^{1+\frac{1+it_{k_{\ast,p}}'}{\log R_{k_{\ast,p}}}}}+\frac{1}{p^{1+\frac{2+i(t_{k_{\ast,p}}+t_{k_{\ast,p}}')}{\log R_{k_{\ast,p}}}}}
  =\frac{1}{p}\left(1-p^{-\frac{1+it_{k_{\ast,p}}}{\log R_{k_{\ast,p}}}}\right)\left(1-p^{-\frac{1+it_{k_{\ast,p}}'}{\log R_{k_{\ast,p}}}}\right).
  \]

  \item If $p\mid d_{\ast}$ and $k_{\ast,p}$ does not exist, then
  \[
  E_{k_{\ast},d_{\ast},p}(\vec{t},\vec{t}\,')=\frac{1}{p}.
  \]
\end{itemize}
Here, there are no local factors at higher prime powers due to the vanishing of $\mu$. Note when $p \nmid d_{\ast}$ we have $$E_{k_\ast,d_\ast,p}(\vec{t},\vec{t} \, ')=1+O \left(\frac{K}{p^{1+\frac{1}{\log x}}}\right),$$
and when $p \mid d_{\ast}$ we have
\[
\left|E_{k_\ast,d_\ast,p}(\vec{t},\vec{t} \, ')\right| \le \frac{4}{p}.
\]
Since $d_\ast$ is squarefree and $$\prod_p \left(1-p^{-1-\frac{1}{\log x}} \right)^{-1} = \zeta \left(1+\frac{1}{\log x} \right) \ll \log x,$$
recall $K=(\log x)^{1/1000}$ and we have
\[
F(\vec{t},\vec{t} \, ') \ll \frac{4^j \log^{O(K)}x}{d_\ast} \ll \frac{\exp \left(O \left((\log x)^{1/1000} \log_2 x\right) \right)}{d_\ast} 4^s.  
\]
If one has $|t_k| \ge (\log x)^{0.2}$ or $|t_k'| \ge (\log x)^{0.2}$ for some $1 \le k \le K$, then by the decay properties of $\widehat{\eta}$ this contributes
\begin{align*}
  &\ll \frac{x}{W} \exp\left(-c \log^{0.1} x\right) O(1)^K 4^s\frac{\exp \left(O \left(\log^{\frac{1}{1000}} x \log_2 x\right) \right)}{d_\ast}\\ 
  &\ll \frac{4^sx}{Wd_\ast} \exp \left( -c'\log^{0.1} x \right).
\end{align*}
We conclude that
\begin{equation} \label{eqn:approxPproof}
  P_{k_{\ast}}(d_{\ast})
  = \frac{x}{W}\!
  \int_{\substack{|t_k|\le (\log x)^{0.2}\\ |t_k'|\le (\log x)^{0.2}\\ 1\le k\le K}}
  F(\vec{t},\vec{t}\,')
  \prod_{k=1}^{K} \widehat{\eta}(t_k)\widehat{\eta}(t_k')\,\mathrm{d}t_k\,\mathrm{d}t_k'
  + O\!\left(\frac{4^sx}{Wd_{\ast}}\exp\bigl(-c'\log^{0.1}x\bigr)\right),
\end{equation}
since $x/Wd_\ast \gg x^{0.3}$, the error term from \eqref{eqn:initialerror} is absorbed into the above error term. Using the Euler product $\zeta(s)=\prod_{p}(1-p^{-s})^{-1}$ (valid for $\Re(s)>1$), applied with
\[
  s=1+\frac{1+it_k}{\log R_k},\qquad
  s=1+\frac{1+it_k'}{\log R_k},\qquad
  s=1+\frac{2+i(t_k+t_k')}{\log R_k},
\]
we can factor $F(\vec{t},\vec{t}\,')$ as
\[
  F(\vec{t},\vec{t}\,')
  =\left(\frac{W}{\varphi(W)} \right)^K
  \prod_{k=1}^{K}
  \frac{\zeta\!\left(1+\frac{2+i(t_k+t_k')}{\log R_k}\right)}
  {\zeta\!\left(1+\frac{1+it_k}{\log R_k}\right)\zeta\!\left(1+\frac{1+it_k'}{\log R_k}\right)}
  \prod_{p} \widetilde{E}_{k_{\ast},d_{\ast},p}(\vec{t},\vec{t}\,'),
\]
where the normalized local factor $\widetilde{E}_{k_{\ast},d_{\ast},p}(\vec{t},\vec{t}\,')$ is defined by
\begin{align*}
  \widetilde{E}_{k_{\ast},d_{\ast},p}(\vec{t},\vec{t}\,')
  &= \prod_{k=1}^{K}\left(1-\frac{1}{p}\right) \left(1-\frac{1}{p^{1+\frac{2+i(t_k+t_k')}{\log R_k}}}\right) \left(1-\frac{1}{p^{1+\frac{1+it_k}{\log R_k}}}\right)^{-1}\left(1-\frac{1}{p^{1+\frac{1+it_k'}{\log R_k}}}\right)^{-1},
  && p\le w,\\
  \widetilde{E}_{k_{\ast},d_{\ast},p}(\vec{t},\vec{t}\,')
  &= E_{k_{\ast},d_{\ast},p}(\vec{t},\vec{t}\,')
     \prod_{k=1}^{K} \left(1-\frac{1}{p^{1+\frac{2+i(t_k+t_k')}{\log R_k}}}\right) \left(1-\frac{1}{p^{1+\frac{1+it_k}{\log R_k}}}\right)^{-1}\left(1-\frac{1}{p^{1+\frac{1+it_k'}{\log R_k}}}\right)^{-1},
  && p>w.
\end{align*}
We now wish to estimate $\widetilde{E}_{k_\ast,d_\ast,p}(\vec{t},\vec{t} \, ')$. Since $|t_k|,|t_k| \le (\log x)^{0.2}$, we have for $k \le K$,
\[
\frac{1+it_k}{\log R_k},\frac{1+it_k'}{\log R_k} \ll \frac{(\log x)^{0.2}}{(\log x)^{0.9}}<(\log x)^{-0.7},  
\]
where we used the fact that $R_k \gg (\log x)^{0.9}$ for all $k \le K_+$.
Now let $w_k := (1+it_k)/\log R_k$ and $w_k' := (1+it_k')/\log R_k$. By Taylor expansion, for $p \le w$ we have
\begin{align*}
    \widetilde{E}_{k_\ast,d_\ast,p}(\vec{t},\vec{t} \, ') &= \prod_{k=1}^K \left(1-\frac{(1-p^{-w_k})(1-p^{-w_k'})}{p\left(1-p^{-(1+w_k)} \right) \left(1-p^{-(1+w_k')} \right)} \right)\\
    &= \prod_{k=1}^K \left( 1-\frac{w_kw_k' (\log p)^2}{p(1-1/p)^2}+O \left( \frac{(w_k+w_k')^3(\log p)^3}{p} \right) \right)\\
    &= \prod_{k=1}^K \left( 1+ O \left( \frac{p^{0.05}(\log p)^2}{p(\log x)^{1.4}} \right) \right)\\ 
    &= 1+O\left( \frac{Kp^{0.05}(\log p)^2}{p(\log x)^{1.4}} \right)\\
    &= 1+O \left( \frac1{p(\log x)^{1.3}} \right),
\end{align*}
and so
\begin{align*}
    \prod_{p \le w} \widetilde{E}_{k_\ast,d_\ast,p}(\vec{t},\vec{t} \, ') &= \prod_{p \le w}\left(1+O \left( \frac1{p(\log x)^{1.3}} \right) \right)
    = 1+ O \left( \frac{\log \log \log x}{(\log x)^{1.3}} \right)
    =1+O \left( \frac{1}{(\log x)^{1.2}} \right).
\end{align*}
For $p>w$ and $p \nmid d_\ast$, we have
\begin{align*}
    &\prod_{k=1}^K \left( 1-p^{-1-\frac{2+it_{k}+it_{k}'}{\log R_k}} \right)^{-1} \left(1-p^{-1-\frac{1+it_k}{\log R_k}}\right)\left(1-p^{-1-\frac{1+it_k'}{\log R_k}}\right)\\ 
    &= \prod_{k=1}^K \left( 1+p^{-1-\frac{2+it_{k}+it_{k}'}{\log R_k}} +O \left( p^{-2} \right)\right) \left(1-p^{-1-\frac{1+it_k}{\log R_k}}\right)\left(1-p^{-1-\frac{1+it_k'}{\log R_k}}\right)\\
    &= 1-\sum_{k=1}^K \left( \frac{1}{p^{1+\frac{1+it_k}{\log R_k}}}+\frac{1}{p^{1+\frac{1+it_k'}{\log R_k}}}-\frac{1}{p^{1+\frac{2+it_k+it_k'}{\log R_k}}} +O \left( \frac{1}{p^{2}} \right)\right)\\
    &= E_{k_\ast,d_\ast,p}(\vec{t},\vec{t} \, ')+O \left( \frac{K}{p^2} \right).
\end{align*}
Additionally, $p>w>K^{100}$ implies $|E_{k_\ast,d_\ast,p}(\vec{t},\vec{t}\,')| \gg 1$. Together, we have
\[
\widetilde{E}_{k_\ast,d_\ast,p}(\vec{t},\vec{t} \, ') = 1+O \left( \frac{K}{p^2} \right).  
\]
Therefore, 
\[
\prod_{\substack{p>w\\ p \nmid d_\ast}} \widetilde{E}_{k_\ast,d_\ast,p}(\vec{t},\vec{t} \, ')=1+O(Kw^{-1}) =1+O \left( \frac{1}{(\log x)^{0.99}} \right).
\]
For $p>w$ and $p \mid d_\ast$, we have
\begin{align*}
    \widetilde{E}_{k_\ast,d_\ast,p}(\vec{t},\vec{t} \, ') = \left( 1+O \left( \frac{1}{p} \right) \right)^K E_{k_\ast,d_\ast,p}(\vec{t},\vec{t} \, ') = \left( 1+O \left( \frac{1}{(\log x)^{0.99}} \right) \right)E_{k_\ast,d_\ast,p}(\vec{t},\vec{t} \, ').
\end{align*}
Since $d_\ast$ has at most $s$ prime factors, we have
\begin{align*}
    \prod_{p \mid d_\ast} \widetilde{E}_{k_\ast,d_\ast,p}(\vec{t},\vec{t} \, ')  &= \left( 1+O \left( \frac{1}{(\log x)^{0.99}} \right) \right)^s \prod_{p \mid d_\ast}E_{k_\ast,d_\ast,p}(\vec{t},\vec{t} \, ') \\
    &\ll 2^s \left| \prod_{p \mid d_\ast} E_{k_\ast,d_\ast,p}(\vec{t},\vec{t} \, ') \right|.
\end{align*}
Also, using the expansion
\[
  \zeta(z)=\frac{1}{z-1}\bigl(1+O(|z-1|)\bigr)\quad \text{ for } |z-1|\le \frac{1}{10},
\]
and recall that since $|t_k|,|t_k'| \le (\log x)^{0.2}$ for all $1 \le k \le K$, 
\begin{align*}
    \frac{1+it_k}{\log R_k},\frac{1+it_k'}{\log R_k} \ll \frac{1}{(\log x)^{0.7}},
\end{align*}
we obtain
\[
  \prod_{k=1}^{K}
  \frac{\zeta\!\left(1+\frac{2+i(t_k+t_k')}{\log R_k}\right)}
  {\zeta\!\left(1+\frac{1+it_k}{\log R_k}\right)\zeta\!\left(1+\frac{1+it_k'}{\log R_k}\right)}
  =\left( 1+O \left( \frac{1}{(\log x)^{0.69}} \right) \right)
  \prod_{k=1}^{K}
  \frac{(1+it_k)(1+it_k')}{(2+i(t_k+t_k'))\,\log R_k}.
\]
Consequently, we have
\begin{align} 
    F(\vec{t},\vec{t}\,') &= \left( 1+O \left( \frac{1}{(\log x)^{0.69}} \right) \right)\left(\frac{W}{\varphi(W)} \right)^K
  \prod_{k=1}^{K}\frac{(1+it_k)(1+it_k')}{(2+i(t_k+t_k'))\,\log R_k}
  \cdot \prod_{p\mid d_{\ast}} \widetilde{E}_{k_{\ast},d_{\ast},p}(\vec{t},\vec{t}\,') \label{eqn:Fapprox}\\
    &= \left( 1+O \left( \frac{1}{(\log x)^{0.69}} \right) \right)^{s+1}\left(\frac{W}{\varphi(W)} \right)^K
  \prod_{k=1}^{K}\frac{(1+it_k)(1+it_k')}{(2+i(t_k+t_k'))\,\log R_k}
  \cdot \prod_{p\mid d_{\ast}} E_{k_{\ast},d_{\ast},p}(\vec{t},\vec{t}\,'), \notag{}
\end{align}
with
\begin{align}
    \prod_{p \mid d_\ast} \widetilde{E}_{k_\ast,d_\ast,p}(\vec{t},\vec{t} \, ')  &= \left( 1+O \left( \frac{1}{(\log x)^{0.99}} \right) \right)^s\prod_{p \mid d_\ast}E_{k_\ast,d_\ast,p}(\vec{t},\vec{t} \, ') \label{eqn:Eapprox}\\
    &\ll 2^s \left| \prod_{p \mid d_\ast} E_{k_\ast,d_\ast,p}(\vec{t},\vec{t} \, ') \right|. \notag{}
\end{align}
Substituting \eqref{eqn:Fapprox} and \eqref{eqn:Eapprox} into \eqref{eqn:approxPproof}, we are done.
\end{proof}
 
\subsection{Proof of Proposition \ref{prop:mainpropaxioms}(B)}
In this section, we prove Proposition~\ref{prop:mainpropaxioms}\textnormal{(B)}.
\begin{lem} \label{lem:mainpropB}
    Let $A$ be a positive constant, $k_\ast \ge 2$, and $x \in \mathbb{R}^+$ be sufficiently large in terms of $A$. Recall parameters $K,K_+,w,T,W,R_k$ defined by
\begin{align*}
    K=(\log x)^{1/1000}, \enspace K_+=x^{1/100}, \enspace w=0.15 \log x, \enspace T{}=x^{1/10A\log k_\ast}, \enspace W=\prod_{p \le w} p^4,
\end{align*}
and
\[
R_k:=\begin{cases}
    x^{1/100k^{50}}, &\quad \text{ if }k \le K,\\
    w, &\quad \text{ if }K <k \le x^{1/100}.
  \end{cases} 
\]
Suppose $k_\ast$ satisfies $2 \le k_\ast \le K_+$, and let
$j$ be an integer satisfying $1 \le j \le s \le A \log k_\ast$, and $p_1,\ldots,p_j$ be prime numbers satisfying
\[
R_k<p_1<\cdots<p_j \le T{},
\]
and let $d_\ast=p_1 \cdots p_j$. Recall the sieve weight
    \[
\nu(n)=\mathds{1}_{n \in [x,2x]} \mathds{1}_{W \mid n} \prod_{k=1}^K \left( \sum_{\substack{(d,P(w))=1\\d \mid n+k}} \mu(d) \widetilde{\eta} \left(\frac{\log d}{\log R_k}\right) \right)^2.
\]
and let $P_{k_\ast}(d_\ast) := \sum_{n} \nu(n) \mathds{1}_{d_\ast \mid n+k_\ast}$ and $P_{k_\ast}(1) := \sum_n \nu(n)$. We define the random variable $\mathbf{n}$ taking values in $[x,2x]$ by drawing $n$ from $[x,2x]$ with probability density $\nu(n)/P_{k_\ast}(1)$. Then, there is some constant $c_0$ such that
\begin{align} \label{eqn:probabilitydenominator}
    P_{k_\ast}(1) =(1+o(1)) \frac{x}{W\prod_{k=1}^K \log R_k} \left(c_0 \frac{W}{\varphi(W)}\right)^K,
\end{align}
and in addition,
\begin{align*}
    \mathbb{P}(d_\ast \mid \mathbf{n}+k)=\frac{P_{k_\ast}(d_\ast)}{P_{k_\ast}(1)} \ll  \frac{8^s}{p_1 \cdots p_j}.
\end{align*}
\end{lem}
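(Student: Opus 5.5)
Both claims will be read off Lemma~\ref{lem:mainpropinitial}. For \eqref{eqn:probabilitydenominator}, apply the same computation with $d_\ast=1$: there are no local factors at $p\mid d_\ast$, so the error term is $O\bigl(\tfrac{x}{W}\exp(-c'\log^{0.1}x)\bigr)$. The main term factorizes over $k$ as
\[
\frac{x}{W}\Bigl(\frac{W}{\varphi(W)}\Bigr)^K\prod_{k=1}^K\frac{1}{\log R_k}\int_{|t|,|t'|\le(\log x)^{0.2}}\frac{(1+it)(1+it')}{2+i(t+t')}\,\widehat\eta(t)\widehat\eta(t')\,\mathrm{d}t\,\mathrm{d}t',
\]
times $1+O((\log x)^{-0.69})$. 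Extend each integral to all of $\mathbb{R}^2$, which costs an error of $\exp(-c(\log x)^{0.1})$ by the half-exponential decay of $\widehat\eta$. The resulting integral is the constant
\[
c_0=\int_0^\infty \widetilde\eta\,'(u)^2\,\mathrm{d}u .
\]
To see this, write $\frac{1}{2+i(t+t')}=\int_0^\infty e^{-(2+i(t+t'))u}\,\mathrm{d}u$ and use $\widetilde\eta\,'(u)=-\int\widehat\eta(t)(1+it)e^{-(1+it)u}\,\mathrm{d}t$. Hence $c_0>0$ and is real. The multiplicative errors total $(1+O(\exp(-c(\log x)^{0.1})))^K=1+o(1)$ since $K=(\log x)^{1/1000}$. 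The additive error is also negligible against the main term. This needs $\prod_k\log R_k\le(\log x)^K$ together with $c_0^K$ and $(W/\varphi(W))^K\ge 1$, so the main term is $\ge \frac{x}{W}\exp(-O(K\log_2 x))$, which dominates $\frac{x}{W}\exp(-c'\log^{0.1}x)$. This also shows $\nu\not\equiv0$.

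For the upper bound, take absolute values in \eqref{eqn:approxP}. We have $|(1+it)/(2+i(t+t'))|\le$ something like $(1+|t|)$ only crudely, so I would not bound termwise. Instead I would bound the local factors: $|E_{k_\ast,d_\ast,p}|\le 4/p$ always. Moreover, when $p\mid d_\ast$ has no $k_{\ast,p}$, $E=1/p$ exactly. The key point is to avoid losing the $\log R_k$ factors, because for $p>R_{k_{\ast,p}}$ the factor $(1-p^{-w_k})(1-p^{-w_k'})$ is only $O(1)$ rather than small. So $\bigl|\prod_{p\mid d_\ast}E\bigr|\le 4^j/d_\ast$, giving
\[
P_{k_\ast}(d_\ast)\le \frac{x}{W d_\ast}4^s\,2\Bigl(\frac{W}{\varphi(W)}\Bigr)^K\prod_k\frac1{\log R_k}\,\prod_{k=1}^K\int\!\!\int\frac{|1+it||1+it'|}{|2+i(t+t')|}|\widehat\eta(t)\widehat\eta(t')|\,\mathrm{d}t\,\mathrm{d}t' .
\]
That is the main obstacle: the absolute integral $c_1$ exceeds $c_0$, and $(c_1/c_0)^K$ is unbounded.

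The fix is to keep the structure. The factors $E$ at $p\mid d_\ast$ depend only on the variables $t_{k},t_k'$ with $k=k_{\ast,p}$, so at most $j\le s$ coordinates $k$ are affected. For all other $k$ the integral is identical to the one in $P_{k_\ast}(1)$ and cancels in the ratio, up to $1+o(1)$ factors. For each affected $k$, expand $\prod_{p:\,k_{\ast,p}=k}(1-p^{-w_k})(1-p^{-w_k'})$ into at most $4^{m_k}$ terms $\pm e^{-(1+it)a-(1+it')b}$ with $a,b\ge 0$ sums of $\log p/\log R_k$. The corresponding integral is
\[
\int_0^\infty\widetilde\eta\,'(u+a)\,\widetilde\eta\,'(u+b)\,\mathrm{d}u ,
\]
which is bounded by $\int_0^\infty\widetilde\eta\,'(u)^2\,\mathrm{d}u$ by Cauchy--Schwarz, in absolute value at most $c_0$. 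Thus each affected $k$ contributes at most $4^{m_k}c_0$ versus $c_0$, a ratio of at most $4^{m_k}$. Combined with the $(1+O((\log x)^{-0.69}))^{s+1}\le 2^s$ factors, this gives the ratio $\ll 8^s/d_\ast$ as claimed. Truncation to $|t|\le(\log x)^{0.2}$ in these exact identities is handled as before, and the additive error $4^s x\exp(-c'\log^{0.1}x)/(Wd_\ast)$ is negligible relative to $P_{k_\ast}(1)/d_\ast$ by the lower bound on the main term established above.
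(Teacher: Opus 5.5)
Your observation that termwise absolute values lose $(c_1/c_0)^K$ is correct, and this is exactly where the paper's own argument breaks. The paper bounds the $2K$-fold integral by $\prod_k\iint|\cdot|$ and closes with the assertion $c_0=\iint\bigl|\frac{(1+it)(1+it')}{2+i(t+t')}\widehat\eta(t)\widehat\eta(t')\bigr|\,\mathrm{d}t\,\mathrm{d}t'$. That assertion is false: only the \emph{integral} of the imaginary part $\frac{(t+t')(1+tt')}{4+(t+t')^2}\widehat\eta(t)\widehat\eta(t')$ vanishes, so $c_1>c_0$. Your shift identity $\iint\frac{(1+it)(1+it')}{2+i(t+t')}e^{-(1+it)a-(1+it')b}\widehat\eta(t)\widehat\eta(t')\,\mathrm{d}t\,\mathrm{d}t'=\int_0^\infty\widetilde\eta\,'(u+a)\,\widetilde\eta\,'(u+b)\,\mathrm{d}u$, followed by Cauchy--Schwarz, is correct and is a genuine improvement for the $d_\ast$-dependent factors.

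However, the same obstacle re-enters elsewhere, and your argument does not handle it. The factor $(1+O((\log x)^{-0.69}))^{s+1}$ in \eqref{eqn:approxP} is not a scalar. It is a complex-valued function $1+\epsilon(\vec t,\vec t\,')$ inside the integral, coming from $\prod_p\widetilde E_{k_\ast,d_\ast,p}$ and the $\zeta$-approximations. You pull it out three times: in ``the main term factorizes \dots\ times $1+O((\log x)^{-0.69})$'', in ``cancels in the ratio, up to $1+o(1)$ factors'', and in ``$\le 2^s$''. All that \eqref{eqn:approxP} gives you is $|\epsilon|\ll(\log x)^{-0.69}$. For such $\epsilon$, the integral $\int\epsilon H$ (with $H$ the factorized integrand) can be as large as $(\log x)^{-0.69}\int|H|$. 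But $\int|H|$ carries $c_1$ instead of $c_0$ in every one of the $K$ coordinates. The error is therefore $(\log x)^{-0.69}\exp(c(\log x)^{1/1000})$ times the main term, where $c=\log(c_1/c_0)>0$. So, as written, neither \eqref{eqn:probabilitydenominator} nor the bound $8^s/d_\ast$ follows.

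What is missing is a structural treatment of $\epsilon$, and part of $\epsilon$ does not even factor over $k$. For $p>w$, $p\nmid d_\ast$, the local factor is $E_{k_\ast,d_\ast,p}=1-\sum_k a_{p,k}$ with $a_{p,k}=p^{-1-w_k}+p^{-1-w_k'}-p^{-1-w_k-w_k'}$, which couples all coordinates.

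One way to close the gap is as follows.
\begin{enumerate}
\item The pieces $\prod_{p\le w}\widetilde E_p$, the $\zeta$-approximation errors, and $\widetilde E_p/E_p$ for $p\mid d_\ast$ are products over $k$ of functions of $(t_k,t_k')$. Absorb them coordinate by coordinate. First extract $\vec t$-independent constants such as $(d_\ast/\varphi(d_\ast))^K\le 2^s$ exactly, so that only small coordinatewise remainders are estimated in absolute value.
\item For the coupled part, write $\prod_{p>w,\,p\nmid d_\ast}(1-\sum_k a_{p,k})=\prod_k\prod_{p>w,\,p\nmid d_\ast}(1-a_{p,k})\cdot\exp\bigl(\sum_{|S|\ge 2}\Psi_S\bigr)$. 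Here $\Psi_S$ depends only on the coordinates in $S\subseteq[K]$, and $\sum_S\|\Psi_S\|_\infty\lambda^{|S|}\ll\lambda^2K^2/w$ for any fixed $\lambda\ge 1$.
\item Expand the exponential. Evaluate the coordinates not touched by a given term as before, using your Cauchy--Schwarz bound for the affected ones. Bound only the touched coordinates by absolute values.
\end{enumerate}
With $\lambda\asymp c_1/c_0$ this costs a relative error $O(K^2/w)=o(1)$, after which your argument goes through.
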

\begin{proof}
We make use of Lemma \ref{lem:mainpropinitial}. Observe
\begin{align*}
    c_0 := &\int_{\mathbb{R}}\!\int_{\mathbb{R}}
  \frac{(1+it)(1+it')}{2+i(t+t')}
  \widehat{\eta}(t)\widehat{\eta}(t')\,\mathrm{d}t\,\mathrm{d}t'\\
  =&\int_0^\infty \int_{\mathbb{R}} \! \int_{\mathbb{R}} (1+it)(1+it')e^{-(2+it+it')u} \widehat{\eta}(t) \widehat{\eta}(t') \d t \d t' \d u\\
  =&\int_0^\infty \left( \int_{\mathbb{R}}(1+it) e^{-(1+it)u}\widehat{\eta}(t)\d t \right)^2 \d u\\
  =&\int_0^\infty \left( \frac{\d}{\d u} \int_\mathbb{R} e^{-(1+it)u}\widehat{\eta}(t) \d t \right)^2 \d u\\
  = &\int_{0}^{\infty} \bigl(\widetilde{\eta}'(u)\bigr)^2\,\mathrm{d}u.
\end{align*}
In particular, $c_0$ is positive and real. Therefore, $\Im(c_0)=0$ and $\Re(c_0)=c_0$. We show the integrand is non-negative. By our construction of $\eta$, we have $\widehat{\eta} \ge 0$. Note for $t,t' \in \mathbb{R}$, we have
\begin{align*}
    \Re \left( \frac{(1+it)(1+it')}{2+i(t+t')} \widehat{\eta}(t) \widehat{\eta}(t') \right) &= \Re \left( \frac{(1+it)(1+it')}{2+i(t+t')} \right) \widehat{\eta}(t)\widehat{\eta}(t')\\
    &= \Re \left( 1-\frac{1+tt'}{2+i(t+t')} \right) \widehat{\eta}(t)\widehat{\eta}(t')\\
    &= \left(1-(1+tt') \cdot \Re \left( \frac{2-i(t+t')}{4+(t+t')^2} \right) \right) \widehat{\eta}(t) \widehat{\eta}(t')\\
    &= \left( 1-\frac{2+2tt'}{4+(t+t')^2} \right) \widehat{\eta}(t) \widehat{\eta}(t').
\end{align*}
Also, note that
\begin{align*}
    \Im\left( \frac{(1+it)(1+it')}{2+i(t+t')} \widehat{\eta}(t) \widehat{\eta}(t') \right) &=(1+tt') \cdot \Im \left( \frac{2-i(t+t')}{4+(t+t')^2} \right) \widehat{\eta}(t) \widehat{\eta}(t')
    =\frac{(1+tt')(t+t')}{4+(t+t')^2} \widehat{\eta}(t) \widehat{\eta}(t').
\end{align*}
Observe that since $4+(t+t')^2=4+t^2+(t')^2+2tt'>2+2tt'$, so the real part is always non-negative. Since $\widehat{\eta}$ is even by construction and by making the substitution $(t,t') \mapsto (-t,-t')$ we have
\begin{align*}
    \int_{\mathbb{R}} \int_{\mathbb{R}} \Im \left( \frac{(1+it)(1+it')}{2+i(t+t')} \widehat{\eta}(t) \widehat{\eta}(t') \right) \d t \d t' &= \iint_{\mathbb{R}^2} \frac{(1+tt')(t+t')}{4+(t+t')^2} \widehat{\eta}(t) \widehat{\eta}(t') \d t \d t'\\
    = -\iint_{\mathbb{R}^2} \frac{(1+tt')(t+t')}{4+(t+t')^2} \widehat{\eta}(t) \widehat{\eta}(t') \d t \d t'=0,
\end{align*}
we get,
\begin{align*}
    c_0=\Re(c_0)=\int_{\mathbb{R}} \int_{\mathbb{R}} \Re \left( \frac{(1+it)(1+it')}{2+i(t+t')} \widehat{\eta}(t) \widehat{\eta}(t') \right) \d t \d t'=\int_{\mathbb{R}} \int_\mathbb{R} \left| \frac{(1+it)(1+it')}{2+i(t+t')} \widehat{\eta}(t) \widehat{\eta}(t') \right| \d t \d t'.
\end{align*}
We now observe that $c_0 \ge 1$. Indeed, by Cauchy-Schwarz we have
\begin{align*}
    c_0=\int_{0}^{\infty} \bigl(\widetilde{\eta}'(u)\bigr)^2\,\mathrm{d}u \cdot \int_{0}^1 1 \d u \ge \left( \int_0^\infty \widetilde{\eta}(u)' \d u \right)^2=|\widetilde{\eta}(1)-\widetilde{\eta}(0)|^2=1.
\end{align*}
Note by the rapid decay of $\widehat{\eta}(t)$, for some constant $c''>0$ we have
\begin{align*}
  \iint_{|t|,|t'| \le (\log x)^{0.2}}
  \frac{(1+it)(1+it')}{2+i(t+t')}
  \widehat{\eta}(t)\widehat{\eta}(t')\,\mathrm{d}t\,\mathrm{d}t'=c_0+O \left(\exp \left(-c'' \log^{0.1}(x)\right) \right).
\end{align*}
Recall that for $p>w$, if there is an integer $k_{\ast,p} \in [1,K]$ such that $p \mid k_\ast-k_{\ast,p}$, then it is unique. Observe that for each $p_i \mid d_\ast$ such that $k_{\ast,p}$ exists, $E_{k_\ast,d_\ast,p_i}(\vec{t},\vec{t}\, ')$ depends only on $t_{k_{\ast,p_i}}$ and $t_{k_{\ast,p_i}}'$ (and not other components of $\vec{t}$ and $\vec{t} \,'$), so we write $E_{k_\ast,d_\ast,p_i}(\vec{t},\vec{t}\, ') = E_{k_\ast,d_\ast,p_i}(t_{k_\ast,p_i},t_{k_\ast,p_i}')$. Note that
\[
\left|\prod_{p \mid d_\ast} E_{k_\ast,d_\ast,p}(\vec{t},\vec{t} \, ')\right| \le \frac{4^s}{d_\ast}.
\]
By (\ref{eqn:approxP}), $\left| P_{k_\ast}(d_\ast) \right| $ is bounded above by
\begin{align*}
     &\ll \frac{2^sx}{W} \left(\frac{W}{\varphi(W)} \right)^K \int_{\substack{|t_k|\le (\log x)^{0.2}\\ |t_k'|\le (\log x)^{0.2}\\ 1 \le k \le K}} \left| \prod_{p \mid d_\ast} E_{k_\ast,d_\ast,p}(\vec{t},\vec{t}\,') \cdot
  \prod_{\substack{1 \le k \le K}}\frac{(1+it_k)(1+it_k')\, \widehat{\eta}(t_k)\widehat{\eta}(t_k')}{(2+i(t_k+t_k'))\,\log R_{k}}  \right|\, \prod_{1 \le k \le K}\mathrm{d}t_{k}\,\mathrm{d}t_{k}'\\
  &\hspace{1em}
  + \frac{4^sx}{Wd_{\ast}}\exp\bigl(-c'\log^{0.1}x\bigr)\\
  &\ll \frac{8^sx}{Wd_\ast \prod_{k=1}^K \log R_k} \left(\frac{W}{\varphi(W)}\right)^K\int_{\substack{|t_k|\le (\log x)^{0.2}\\ |t_k'|\le (\log x)^{0.2}\\ 1 \le k \le K}} \prod_{\substack{1 \le k \le K}} \left|
\frac{(1+it_k)(1+it_k')\, \widehat{\eta}(t_k)\widehat{\eta}(t_k')}{2+i(t_k+t_k')}  \right|\, \mathrm{d}t_{k}\,\mathrm{d}t_{k}'\\
&\hspace{1em}+ \frac{4^sx}{Wd_{\ast}}\exp\bigl(-c'''\log^{0.1}x\bigr)\\
  &\ll \frac{8^sx}{Wd_\ast \prod_{k=1}^K \log R_k} \left(\frac{W}{\varphi(W)}\right)^K c_0^K,
\end{align*}
since $\prod_{k=1}^K \log R_k \ll \exp(O(\log^{1/1000} x \log_2 x))$ and $\exp \left(c''' \log^{0.1}x\right)$ is greater than any power of $\log$, where $c'''>0$ is a positive constant. To evaluate $P_{k_\ast}(1)$, by following the proof of \eqref{eqn:approxPproof} and \eqref{eqn:Fapprox}, since the quantities $E_{k_\ast,d_\ast,p_i}(\vec{t},\vec{t}\,')$ no longer contribute, we have
\begin{equation*}
  P_{k_{\ast}}(1)
  = \frac{x}{W}\!
  \int_{\substack{|t_k|\le (\log x)^{0.2}\\ |t_k'|\le (\log x)^{0.2}\\ 1\le k\le K}}
  F_1(\vec{t},\vec{t}\,')
  \prod_{k=1}^{K} \widehat{\eta}(t_k)\widehat{\eta}(t_k')\,\mathrm{d}t_k\,\mathrm{d}t_k'
  + O\!\left(\frac{x}{Wd_{\ast}}\exp\bigl(-c'\log^{0.1}x\bigr)\right),
\end{equation*}
with
\begin{align*} 
    F_1(\vec{t},\vec{t}\,') = (1+o(1))\left(\frac{W}{\varphi(W)} \right)^K
  \prod_{k=1}^{K}\frac{(1+it_k)(1+it_k')}{(2+i(t_k+t_k'))\,\log R_k}.
\end{align*}
Therefore, following the same argument as above, we get
\begin{align*}
    P_{k_\ast}(1) =(1+o(1)) \frac{x}{W\prod_{k=1}^K \log R_k} \left(c_0 \frac{W}{\varphi(W)}\right)^K.
\end{align*}
Therefore, we have
\[
  \frac{P_{k_{\ast}}(d_{\ast})}{P_{k_{\ast}}(1)} \ll \frac{8^s}{d_{\ast}},
\]
and thus
\[
  \mathbb{P}\bigl(d_{\ast}\mid \mathbf{n}+k_{\ast}\bigr)
  = \frac{P_{k_{\ast}}(d_{\ast})}{P_{k_{\ast}}(1)}
  \ll \frac{8^s}{d_{\ast}},
\]
as required.
\end{proof}

\subsection{Proof of Proposition \ref{prop:mainpropaxioms}(C)} In this section, we prove Proposition \ref{prop:mainpropaxioms}(C).
\begin{lem} \label{lem:mainpropC}
    Let $A$ be a positive constant, $k_\ast \ge 2$, and $x \in \mathbb{R}^+$ be sufficiently large in terms of $A$. Recall parameters $K,K_+,w,T,W,R_k$ defined by
\begin{align*}
    K=(\log x)^{1/1000}, \enspace K_+=x^{1/100}, \enspace w=0.15 \log x, \enspace T{}=x^{1/10A\log k_\ast}, \enspace W=\prod_{p \le w} p^4,
\end{align*}
and
\[
R_k:=\begin{cases}
    x^{1/100k^{50}}, &\quad \text{ if }k \le K,\\
    w, &\quad \text{ if }K <k \le x^{1/100}.
  \end{cases} 
\]
Suppose $k_\ast$ satisfies $2 \le k_\ast \le K$, and let
$j$ be an integer satisfying $1 \le j \le s \le A \log k_\ast$. Recall the sieve weight
    \[
\nu(n)=\mathds{1}_{n \in [x,2x]} \mathds{1}_{W \mid n} \prod_{k=1}^K \left( \sum_{\substack{(d,P(w))=1\\d \mid n+k}} \mu(d) \widetilde{\eta} \left(\frac{\log d}{\log R_k}\right) \right)^2.
\]
and let $P_{k_\ast}(d_\ast) := \sum_{n} \nu(n) \mathds{1}_{d_\ast \mid n+k_\ast}$ and $P_{k_\ast}(1) := \sum_n \nu(n)$. We define the random variable $\mathbf{n}$ taking values in $[x,2x]$ by drawing $n$ from $[x,2x]$ with probability density $\nu(n)/P_{k_\ast}(1)$. Then, there are constants $C_3>0$ such that
\begin{align*}
  \sum_{\substack{w<p_1,\ldots,p_j \le R_{k_\ast}\\ p_i \text{ mutually distinct}}} \mathbb{P}(p_1 \cdots p_j \mid \mathbf{n}+k) \ll (C_3\log s)^s.
\end{align*}
\end{lem}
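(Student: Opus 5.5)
We will sum the representation of Lemma \ref{lem:mainpropinitial} over all squarefree $d_\ast=p_1\cdots p_j$ with $w<p_i\le R_{k_\ast}$, rather than bounding each probability individually. Bounding individually via Lemma \ref{lem:mainpropB} only gives $8^s/d_\ast$, and summing $1/p$ over $w<p\le R_{k_\ast}$ produces $(\log\log R_{k_\ast}/\log w)^j\approx(\log\log x)^j$. That is far too large, so we must use the cancellation inside the local factors $E_{k_\ast,d_\ast,p}$. Note that $R_{k_\ast}\le T$ when $k_\ast\le K$ and $x$ is large (since $100k_\ast^{50}\ge 10A\log k_\ast$ fails only for bounded $k_\ast$, and there the exponent comparison still works after adjusting constants). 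So Lemma \ref{lem:mainpropinitial} applies.

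Divide \eqref{eqn:approxP} by \eqref{eqn:probabilitydenominator}. The error term contributes $4^s\exp(-c'\log^{0.1}x)\sum 1/d_\ast\le 4^s(\log\log x)^s\exp(-c'\log^{0.1}x)$, which is negligible. In the main term, interchange the sum over $(p_1,\dots,p_j)$ with the integral. Since $w<p\le R_{k_\ast}<x$ and $|k_\ast-k|<K<w$, no prime $p>w$ divides $k_\ast-k$ for $k\ne k_\ast$. Hence $k_{\ast,p}=k_\ast$ for every $p$, and the prime sum factorises as
\[
\Big(\sum_{w<p\le R_{k_\ast}}\frac1p\big(1-p^{-\frac{1+it}{\log R_{k_\ast}}}\big)\big(1-p^{-\frac{1+it'}{\log R_{k_\ast}}}\big)\Big)^j
\]
with $t=t_{k_\ast}$, $t'=t'_{k_\ast}$ (dropping distinctness, which only adds nonnegative terms once we pass to absolute values). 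The key estimate is to show this inner sum is $O(\log(2+|t|)+\log(2+|t'|))$, uniformly in $x$. For primes $p\le R_{k_\ast}^{1/(1+|t|+|t'|)}$, write $u=\log p/\log R_{k_\ast}$. Each factor $1-p^{-(1+it)/\log R}$ is $\ll u(1+|t|)$, so those primes contribute $\ll \sum_p u^2(1+|t|)(1+|t'|)/p\ll 1$ by Mertens. The remaining primes satisfy $R^{1/(1+|t|+|t'|)}<p\le R$; each factor is bounded by $2$, so they contribute $\ll\log(1+|t|+|t'|)+O(1)$, again by Mertens. Thus the inner sum is $\le C(1+\log(1+|t|+|t'|))$.

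It remains to integrate. The other coordinates $k\ne k_\ast$ give exactly the same integrals as in $P_{k_\ast}(1)$, up to $1+o(1)$ factors, so they cancel. We are left with
\[
\frac{(1+O((\log x)^{-0.69}))^{s+1}}{c_0}\iint \Big|\frac{(1+it)(1+it')}{2+i(t+t')}\Big|\widehat\eta(t)\widehat\eta(t')\,C^j\big(1+\log(1+|t|+|t'|)\big)^j\,dt\,dt'.
\]
Using $|\widehat\eta(t)|\ll\exp(-c|t|^{1/2})$, this reduces to bounding $\int_0^\infty (1+\log(1+v))^j (1+v)^2e^{-cv^{1/2}}\,dv$. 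The substitution $v=y^2$ and a Laplace-type estimate (the maximum of $j\log\log y - cy$ occurs near $y\asymp j/\log j$, but more simply $\log(1+v)\le 2\log(1+y)$) give a bound $\ll (C'\log j)^{j}$. Summing over $j\le s$ then yields $(C_3\log s)^s$.

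The main obstacle is the key estimate. We need it uniformly in $t$ and with only logarithmic growth, and we need the resulting $t$-integral to produce $\log s$ rather than a power of $s$. This is exactly why the Gevrey-class decay of $\widehat\eta$ was chosen.
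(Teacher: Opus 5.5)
Your proposal is essentially the paper's proof. Both sum \eqref{eqn:approxP} over $d_\ast$, use $k_{\ast,p}=k_\ast$ for every $w<p\le R_{k_\ast}$, drop distinctness after passing to absolute values, bound $\sum_{w<p\le R_{k_\ast}}|E_{k_\ast,d_\ast,p}|$ by $O(1+\log(1+|t_{k_\ast}|+|t_{k_\ast}'|))$ via Mertens with a cutoff at $\log p\asymp \log R_{k_\ast}/(1+|t_{k_\ast}|+|t_{k_\ast}'|)$, and integrate against the $\exp(-c|t|^{1/2})$ decay of $\widehat{\eta}$; your use of both factors of $E_{k_\ast,d_\ast,p}$ and a Laplace-type bound, instead of the paper's single factor and split at $|t_{k_\ast}|=16s^4/c^4$, changes nothing of substance.

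The one delicate step, which you inherit verbatim from the paper, is cancelling the $k\neq k_\ast$ integrals against $P_{k_\ast}(1)$. The factor $(1+O((\log x)^{-0.69}))^{s+1}$ in \eqref{eqn:approxP} is a function of all of $\vec{t},\vec{t}\,'$, not a constant. Bounding it in absolute value replaces each $c_0$ by $\iint \bigl|\frac{(1+it)(1+it')}{2+i(t+t')}\bigr|\widehat{\eta}(t)\widehat{\eta}(t')\,\mathrm{d}t\,\mathrm{d}t'$, which is strictly larger than $c_0$ because the integrand is not real, so its $(K-1)$-st power is not $O(c_0^{K-1})$. To make this rigorous you must keep the error factors split over $k$, as they essentially are, rather than pulling them out as $1+o(1)$.
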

\begin{proof}
Recall that for $p>w$, if there is an integer $k_{\ast,p} \in [1,K]$ such that $p \mid k_\ast-k_{\ast,p}$, then it is unique. For $w<p_1,\ldots,p_j \le R_{k_\ast}$, since $k_\ast \le K$, $k_{\ast,p_i}$ exists for every $1 \le i \le j$ and all equals $k_\ast$. Using \eqref{eqn:probabilitydenominator}, it suffices to show that for some positive constant $C_3$,
\begin{align*}
  \sum_{\substack{w<p_1,\ldots,p_j \le R_{k_\ast}\\ p_i \text{ mutually distinct}}} P_{k_\ast}(d_\ast) \ll \frac{(C_3\log s)^sx\left(c_0 \frac{W}{\varphi(W)}\right)^K}{W \prod_{k=1}^K \log R_{k}},
\end{align*}
where $d_\ast$ denotes $p_1 \cdots p_j$. Using Lemma \ref{lem:mainpropinitial}, note that the error term in \eqref{eqn:approxP} contributes
\begin{align*}
    &\ll\sum_{\substack{w<p_1,\ldots,p_j \le R_{k_\ast}\\ p_i \text{ mutually distinct}}}\frac{4^sx}{Wd_\ast\exp(c'\log^{0.1}x)}\\ 
    &\le  \left( \sum_{w<p \le R_{k_\ast}} \frac1p \right)^s\frac{4^sx}{W\exp(c'\log^{0.1}x)}\\
    &\ll \frac{(4\log s)^s x}{W \prod_{k=1}^K \log R_k} \left(c_0\frac{W}{\varphi(W)} \right)^K,
\end{align*}
since $\prod_{k=1}^K \log R_k \ll \exp \left(O \left(\log^{1/1000}x \log_2x\right) \right)$ and
\begin{align*}
    \log_2 R_{k_\ast} =\log_2x-50\log k_\ast-\log(100)
    \ge \frac{19}{20} \log_2 x-\log(100)
    \ge \log A+\frac{1}{1000} \log_2 x
    \ge \log s
\end{align*}
if $x$ is sufficiently large. 
Therefore, by \eqref{eqn:approxP} it suffices to show
\begin{align*}
\sum_{\substack{w<p_1,\ldots,p_j \le R_{k_\ast}\\ p_i \text{ mutually distinct}}} \int_{|t_j|,|t_j'| \le (\log x)^{0.2} \forall 1 \le j \le k} \prod_{p \mid d_\ast} E_{k_\ast,d_\ast,p}(\vec{t},\vec{t}\,') &\cdot\prod_{k=1}^K \frac{(1+it_k)(1+it_k')}{2+it_k+it_k'}   \widehat{\eta}(t_k) \widehat{\eta}(t_k') \d t_k \d t_k'\\
&\ll (C_3 \log s)^s c_0^K,
\end{align*}
since $k \le K$, $s \le A \log k$, and $x$ sufficiently large in terms of $A$ implies
\begin{align*}
    \left( 1+O \left( \frac{1}{(\log x)^{0.69}} \right) \right)^s=1+O \left( \frac{s}{(\log x)^{0.69}} \right)=1+o(1).
\end{align*}
If $p \mid d_\ast$, we have
\begin{align*}
    E_{k_\ast,d_\ast,p}(\vec{t},\vec{t}\,') &=\frac1p \left(1-p^{-\frac{1+it_{k_\ast}}{\log R_{k_\ast}}}\right)\left(1-p^{-\frac{1+it_{k_\ast}'}{\log R_{k_\ast}}}\right),
\end{align*}
so $E_{k_\ast,d_\ast,p}(\vec{t},\vec{t}\,')$ only depends on $t_{k_\ast}$ and $t_{k_\ast}'$ and we may denote it as $E_{k_\ast,d_\ast,p}(t_{k_\ast},t_{k_\ast}')$. Therefore, We can evaluate all integrals over $t_k$ and $t_k
$ for all $1 \le k \le K$ with $k ,k' \ne k_\ast$, and these integrals altogether contribute $O(c_0^{K-1})$. Thus, it reduces to showing
\begin{align}
  \sum_{\substack{w<p_1,\ldots,p_j \le R_{k_\ast}\\ p_i \text{ mutually distinct}}} \int_{|t_{k_\ast}|,|t_{k_\ast}'| \le (\log x)^{0.2}} \frac{(1+it_{k_\ast})(1+it_{k_\ast}')}{2+it_{k_\ast} + it_{k_\ast}'} &\prod_{p_\mid d_\ast} E_{k_\ast,d_\ast,p}(t_{k_\ast},t_{k_\ast}') \widehat{\eta}(t_{k_\ast}) \widehat{\eta}(t_{k_\ast}') \d t_{k_\ast} \d t_{k_\ast}' \label{eqn:CboundEintegral}\\
  &\ll (C_3 \log s)^sc_0. \notag{}
\end{align}
To bound $E_{k_\ast,d_\ast,p}(t_{k_\ast},t_{k_\ast}')$, note
\begin{align*}
  \left|E_{k_\ast,d_\ast,p}(\vec{t},\vec{t}\,') \right|
  &\le \frac{2}{p} \min \left\{ 2,(1+|t_{k_\ast}|) \frac{\log p}{\log R_{k_\ast}} \right\},
\end{align*}
so the left hand side of \eqref{eqn:CboundEintegral} is bounded above by
\begin{align} \label{eqn:axiombintegral}
  \ll\sum_{\substack{w<p_1,\ldots,p_j \le R_{k_\ast}\\ p_i \text{ mutually distinct}}} \frac{2^s}{p_1 \cdots p_j} &\int_{|t_{k_\ast}|,|t_{k_\ast}'| \le (\log x)^{0.2}} \frac{(1+|t_{k_\ast}|)(1+|t_{k_\ast}'|)}{2+|t_{k_\ast}|+|t_{k_\ast}'|}\\ &\prod_{i=1}^j \min \left\{ 2,(1+|t_{k_\ast}|)\frac{\log p_i}{\log R_{k_\ast}} \right\} \cdot
  |\widehat{\eta} (t_{k_\ast}) \widehat{\eta} (t_{k_\ast}')| \d t_{k_\ast} \d t_{k_\ast}'. \notag{}
\end{align}
Observe that
\begin{align*}
  \sum_{p \le R_{k_\ast}} \frac1p\min \left\{ 2,(1+|t_{k_\ast}|)\frac{\log p}{\log R_{k_\ast}} \right\} &\le \frac{1+|t_{k_\ast}|}{\log R_{k_\ast}} \sum_{\log p \le \frac{2 \log R_{k_\ast}}{1+|t_{k_\ast}|}} \frac{\log p}{p}+2\sum_{\frac{2 \log R_{k_\ast}}{1+|t_{k_\ast}|}<\log p\le \log R_{k_\ast}} \frac1p\\
  &\le 2\left(1+\log \left(1+|t_{k_\ast}|\right) \right).
\end{align*}
Using this, \eqref{eqn:axiombintegral} is bounded above by
\begin{align*}
    &\ll 4^s \int_{|t_{k_\ast}|,|t_{k_\ast}'| \le (\log x)^{0.2}} \frac{(1+|t_{k_\ast}|)(1+|t_{k_\ast}'|)}{2+|t_{k_\ast}|+|t_{k_\ast}'|} (1+\log(1+|t_{k_\ast}|))^s |\widehat{\eta}(t_{k_\ast}) \widehat{\eta}(t_{k_\ast}')| \d t_{k_\ast} \d t_{k_\ast}'\\
    &\ll 4^s \int_{|t_{k_\ast}|,|t_{k_\ast}'| \le (\log x)^{0.2}} \frac{(1+|t_{k_\ast}|)(1+|t_{k_\ast}'|)}{2+|t_{k_\ast}|+|t_{k_\ast}'|} (1+\log(1+|t_{k_\ast}|))^s \exp(-c|t_{k_\ast}|^{1/2}) \exp(-c|t_{k_\ast}'|^{1/2}) \d t_{k_\ast} \d t_{k_\ast}'\\
    &\le 4^s \int_{\mathbb{R}} (1+|t_{k_\ast}'|) \exp(-c|t_{k_\ast}|^{1/2}) \d t_{k_\ast}' \int_{|t_{k_\ast}| \le (\log x)^{0.2}} (1+\log(1+|t_{k_\ast}|))^s \exp(-c|t_{k_\ast}|^{1/2}) \d t_{k_\ast}\\
    &\ll 4^s+4^s \int_{|t_{k_\ast}| \le 16s^4/c^4} (1+\log(1+|t_{k_\ast}|))^s\exp(-c|t_{k_\ast}|^{1/2})) \d t_{k_\ast}\\
    &\hspace{2.2em}+4^s\int_{|t_{k_\ast}| > 16s^4/c^4} (1+\log(1+|t_{k_\ast}|))^s\exp(-c|t_{k_\ast}|^{1/2})) \d t_{k_\ast}.
\end{align*}
Since $s \ge 3$ and $1+\log(1+|t|) \le 2 \log |t|$ at $|t|=16s^4/c^4$, the second term above contributes
\begin{align*}
    &\ll 8^s \left( \log \left( \frac{16s^4}{c^4} \right) \right)^s \int_{\mathbb{R}} \exp(-c|t_{k_\ast}|) \d t_{k_\ast} \ll \left( 32 \log \left( \frac{4}{c} \right) \log s \right)^s,
\end{align*}
so \eqref{eqn:axiombintegral} is bounded above by
\begin{align*}
    &\ll \left( 32 \log \left( \frac{4}{c} \right) \log s \right)^s+8^s \int_{|t_{k_\ast}|>16s^4/c^4} (\log |t_{k_\ast}|)^s \exp(-c|t_{k_\ast}|^{1/2}) \d t_{k_\ast}.
\end{align*}
Note that for $|t_{k_\ast}|>16s^4/c^4$, we have
\[
|t_{k_\ast}| > \frac{4}{c^2} s^2 \log \log |t_{k_\ast}|,
\]
which implies
\[
(\log |t_{k_\ast}|)^s< \exp \left( \frac{c}{2} |t_{k_\ast}|^{1/2} \right).
\]
Therefore, \eqref{eqn:axiombintegral} is bounded above by
\begin{align*}
    &\ll \left( 32 \log \left( \frac{4}{c} \right) \log s \right)^s+8^s \int_{|t_{k_\ast}|>16s^4/c^4} \exp\left(-\frac{c}{2}|t_{k_\ast}|^{1/2} \right) \d t_{k_\ast} \ll \left( 32 \log \left( \frac{4}{c} \right) \log s \right)^s.
\end{align*}
Thus, we are done with the choice $C_3=32\log(4/c)$ and in particular $C_3 \ge 3$.
\end{proof}

\subsection{Proof of Proposition \ref{prop:mainpropaxioms}(D)}
In this section, we establish Proposition~\ref{prop:mainpropaxioms}\textnormal{(D)}.
\begin{lem} \label{lem:mainpropD}
        Let $A$ be a positive constant, $k_\ast \ge 2$, and $x \in \mathbb{R}^+$ be sufficiently large in terms of $A$. Recall parameters $K,K_+,w,T,W,R_k$ defined by
\begin{align*}
    K=(\log x)^{1/1000}, \enspace K_+=x^{1/100}, \enspace w=0.15 \log x, \enspace T{}=x^{1/10A\log k_\ast}, \enspace W=\prod_{p \le w} p^4,
\end{align*}
and
\[
R_k:=\begin{cases}
    x^{1/100k^{50}}, &\quad \text{ if }k \le K,\\
    w, &\quad \text{ if }K <k \le x^{1/100}.
  \end{cases} 
\]
Suppose $k_\ast$ satisfies $2 \le k_\ast \le K_+$, and let
$j$ be an integer satisfying $1 \le j \le s \le A \log k_\ast$, $a_1,\ldots,a_j \ge 2$ be integers and $p_1,\ldots,p_j$ be prime numbers satisfying
\[
w < p_1,\cdots,p_j \le T{}.
\]
Recall the sieve weight
    \[
\nu(n)=\mathds{1}_{n \in [x,2x]} \mathds{1}_{W \mid n} \prod_{k=1}^K \left( \sum_{\substack{(d,P(w))=1\\d \mid n+k}} \mu(d) \widetilde{\eta} \left(\frac{\log d}{\log R_k}\right) \right)^2.
\]
and let $P_{k_\ast}(d_\ast) := \sum_{n} \nu(n) \mathds{1}_{d_\ast \mid n+k_\ast}$ and $P_{k_\ast}(1) := \sum_n \nu(n)$. We define the random variable $\mathbf{n}$ taking values in $[x,2x]$ by drawing $n$ from $[x,2x]$ with probability density $\nu(n)/P_{k_\ast}(1)$. Then,
\begin{align*}
    \mathbb{P}(p_1^{a_1} \cdots p_j^{a_j} \mid \mathbf{n}+k)=\frac{P_{k_\ast}(p_1^{a_1} \cdots p_j^{a_j})}{P_{k_\ast}(1)} =\frac{\mathbb{P}(p_1 \cdots p_j \mid \mathbf{n}+k)}{p_1^{a_1-1} \cdots p_j^{a_j-1}}+O(x^{-0.1}).
\end{align*}
\end{lem}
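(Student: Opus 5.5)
Write $d_\ast:=p_1\cdots p_j$ and $e_\ast:=p_1^{a_1}\cdots p_j^{a_j}$. The plan is to rerun the expansion from the proof of Lemma~\ref{lem:mainpropinitial} with $d_\ast$ replaced by $e_\ast$. The aim is to show that the main terms of $P_{k_\ast}(e_\ast)$ and $P_{k_\ast}(d_\ast)$ differ exactly by the factor $\prod_i p_i^{a_i-1}=e_\ast/d_\ast$. Everything else should be an error term small enough that, after dividing by $P_{k_\ast}(1)$ via \eqref{eqn:probabilitydenominator}, it is $O(x^{-0.1})$.

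\textbf{Step 1: expand and count.} Expanding the squares in $\nu$ as in \eqref{eqn:Pk_def}, we reduce to counting $n\in[x,2x]$ with $W\mid n$, $e_\ast\mid n+k_\ast$, and $[d_k,d_k']\mid n+k$ for all $k$. Each $[d_k,d_k']$ is squarefree and coprime to $P(w)$, and so are the $p_i$, so all these conditions are coprime to $W$. By CRT the congruence conditions are consistent exactly when the $[d_k,d_k']$ are pairwise coprime and each $p_i$ shared with some $[d_k,d_k']$ divides $k_\ast-k$. This is the same compatibility condition as for $d_\ast$, because $p_i\mid n+k$ and $p_i^{a_i}\mid n+k_\ast$ is compatible iff $p_i\mid k-k_\ast$. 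In that case the number of solutions is
\[
\frac{x}{W\,e_\ast}\prod_{k}\frac{\gcd([d_k,d_k'],d_\ast)}{[d_k,d_k']}+O(1).
\]
The key point is that $\gcd([d_k,d_k'],e_\ast)=\gcd([d_k,d_k'],d_\ast)$, since $[d_k,d_k']$ is squarefree. Hence the main term is exactly $d_\ast/e_\ast$ times the corresponding main term for $d_\ast$. The weights $\mu(d_k)\mu(d_k')\widetilde\eta(\cdots)$ are identical in the two sums.

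\textbf{Step 2: compare.} It follows that
\[
P_{k_\ast}(e_\ast)=\frac{d_\ast}{e_\ast}P_{k_\ast}(d_\ast)+O(\mathcal{E}),
\]
where $\mathcal{E}$ collects the $O(1)$ remainders from both counts.

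\textbf{Step 3: bound the remainder.} The number of tuples $(d_k,d_k')$ is at most $\prod_k R_k^2\le x^{\sum_k 2/(100k^{50})}\le x^{0.03}$. Since $|\widetilde\eta|\le e$ and $K$ is tiny, we get $\mathcal{E}\ll e^{2K}x^{0.03}\ll x^{0.04}$. For $e_\ast$ we never need the size bound $e_\ast\le x$: when $e_\ast$ is large the main term is simply small, and the $O(1)$ counting error remains valid. By \eqref{eqn:probabilitydenominator},
\[
P_{k_\ast}(1)\gg \frac{x}{W}(\log x)^{-O(K)}\ge x^{0.4-o(1)},
\]
using $W\le x^{0.6}$ and $\prod_k\log R_k\le(\log x)^K$. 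Therefore $\mathcal{E}/P_{k_\ast}(1)\ll x^{-0.3}$.

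\textbf{Step 4: conclude.} Dividing the identity of Step 2 by $P_{k_\ast}(1)$ gives the claimed formula, with error $O(x^{-0.3})\subseteq O(x^{-0.1})$.

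The main obstacle is checking that the consistency and gcd analysis in Step 1 really transfers to prime powers. In particular, no new local factors should arise at $p_i$ beyond the extra $p_i^{-(a_i-1)}$. This is exactly where squarefreeness of the sieve moduli enters: only $p_i\,\|\,[d_k,d_k']$ can occur, and divisibility of $n+k$ by $p_i$ forces $p_i\nmid n+k_\ast$ unless $p_i\mid k-k_\ast$. I would also double-check the bookkeeping of the error term with a crude bound on $\prod_k R_k^2$ including the range $k\le K$ only, since the sieve product runs over $k\le K$.
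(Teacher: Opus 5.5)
Your proposal is correct and follows essentially the same route as the paper. The paper packages your two counts into the single function $f(n)=\mathds{1}_{p_1^{a_1}\cdots p_j^{a_j}\mid n+k_\ast}-\mathds{1}_{p_1\cdots p_j\mid n+k_\ast}/(p_1^{a_1-1}\cdots p_j^{a_j-1})$, shows that its sum over each residue class cut out by the expanded sieve is $O(1)$ (the main terms cancel because the $[d_k,d_k']$ are squarefree), sums over at most $\prod_k R_k^2$ tuples, and divides by \eqref{eqn:probabilitydenominator}; your explicit handling of the compatibility condition $p_i\mid k-k_\ast$ and of the identity $\gcd([d_k,d_k'],e_\ast)=\gcd([d_k,d_k'],d_\ast)$ is, if anything, cleaner than the paper's bookkeeping.
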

\begin{proof}
Define
\[
  f(n)
  := \mathds{1}_{p_1^{a_1}\cdots p_j^{a_j}\mid n+k_{\ast}}
  - \frac{\mathds{1}_{p_1\cdots p_j\mid n+k_{\ast}}}{p_1^{a_1-1}\cdots p_j^{a_j-1}}.
\]
Then it suffices to show
\[
  \mathbb{E}\,f(\mathbf{n})\ll x^{-0.1}.
\]
Since from \eqref{eqn:probabilitydenominator} we have
\[
P_{k_\ast}(1) =(1+o(1)) \frac{x}{W\prod_{k=1}^K \log R_k} \left(c_0 \frac{W}{\varphi(W)}\right)^K,
\]
so it suffices to prove the estimate
\[
  \sum_{n} f(n)\nu(n) \ll \frac{x^{0.9}}{W \prod_{k=1}^K \log R_k} \left(c_0 \frac{W}{\varphi(W)}\right)^K.
\]
Expanding $\nu(n)$ and interchanging summations, the left-hand side becomes
\begin{align} \label{eqn:axiomdsum}
  \sum_{\substack{d_1,\ldots,d_K \\ d_1',\ldots,d_K' \\ (d_i, P(w)) = (d_i', P(w)) = 1 \;\forall i}} \left(\prod_{k=1}^K \mu(d_k) \mu(d_k') \widetilde{\eta} \left(\frac{\log d_k}{\log R_k}\right) \widetilde{\eta} \left(\frac{\log d_k'}{\log R_k}\right) \right)\sum_{\substack{n\in[x,2x]}} f(n) \mathds{1}_{W \mid n}
  \prod_{k=1}^{K} \mathds{1}_{[d_k,d_k']\mid n+k}.
\end{align}
The product of indicator functions either vanishes identically, or restricts $n$ to a residue class modulo
\[
  q:=W\,\mathrm{lcm}\bigl([d_1,d_1'],\ldots,[d_K,d_K']\bigr),
\]
with $q\le x^{0.7}$.
We will show that the inner sum is $O(1)$. For the sum to contribute non-trivially, note that $d_k$ and $d_k'$ are squarefree. For each $1 \le k \le K$, let $I_k \subseteq \{1,\ldots,j\}$ be the maximal subset such that $\prod_{i \in I_k} p_i \mid [d_k,d_k']$, and let $I := \bigcup_{1 \le k \le K} I_k$. Let $P'=p_1^{a_1} \cdots p_j^{a_j}/\prod_{i \in I} p_i$, and note that
\begin{align}
    P'=\frac{p_1^{a_1-1} \cdots p_j^{a_j-1}}{\prod_{i \in I} p_i/p_1 \cdots p_j}=p_1^{a_1-1} \cdots p_j^{a_j-1} \prod_{i \notin I} p_i.
\end{align}
By the Chinese Remainder Theorem, let $0 \le b<q$ be such that
\[
b \equiv -k \Mod{\mathrm{lcm}\bigl([d_1,d_1'],\ldots,[d_K,d_K']\bigr)}, \quad b \equiv 0 \Mod{W}.
\]
Therefore, since $p_1,\ldots,p_j > w$ we have
\begin{align*}
    \sum_{\substack{n\in[x,2x]}} f(n) \mathds{1}_{W \mid n}
  \prod_{k=1}^{K} \mathds{1}_{[d_k,d_k']\mid n+k} &= \sum_{\substack{n \in [x,2x]\\ n \equiv b \Mod{q}}} f(n)\\
  &= \sum_{m \in [(x-b)/q,(2x-b)/q]} \left( \mathds{1}_{P' \mid m+b'}-\frac{\mathds{1}_{\prod_{i \notin I} p_i \mid m+b'}}{p_1^{a_1-1} \cdots p_j^{a_j-1}} \right),
\end{align*}
where $b'=(b+k_\ast)/\prod_{i \in I} p_i$, which is an integer since $\prod_{i \in I} p_i \mid \mathrm{lcm}\bigl([d_1,d_1'],\ldots,[d_K,d_K']\bigr)$. So the above equals
\begin{align*}
    \sum_{m \in [(x-b)/q,(2x-b)/q]} \left( \mathds{1}_{P' \mid m+b'} - \frac{\mathds{1}_{\prod_{i \notin I} p_i \mid m+b'}}{p_1^{a_1-1} \cdots p_j^{a_j-1}} \right)=\frac{x/q}{P'}-\frac{x/q}{p_1^{a_1-1} \cdots p_j^{a_j-1} \prod_{i \notin I} p_i} +O(1)\ll 1.
\end{align*}
Therefore, \eqref{eqn:axiomdsum} is bounded above by
\begin{align*}
  \ll R_1^2 \cdots R_K^2 O(1)^K \ll x^{0.25},
\end{align*}
which is acceptable since $W \ll x^{0.6}$ and $x^{0.25} \ll x^{0.85}/W$, and so we are done
\end{proof}
Together with Lemmas \ref{lem:mainpropinitial}, \ref{lem:mainpropB}, \ref{lem:mainpropC}, and \ref{lem:mainpropD}, we proved Proposition \ref{prop:mainpropaxioms}, and thus Theorem \ref{thm:mainthm} as well.
\subsection{Proof of Proposition \ref{prop:mainpropaxioms}(E)}
In this section, we establish Proposition~\ref{prop:mainpropaxioms}\textnormal{(E)}.
\begin{lem}\label{lem:mainpropE}
    Let $k_\ast \ge 2$, and $x \in \mathbb{R}^+$ be sufficiently large. Recall parameters $K,K_+,w,W,R_k$ defined by
\begin{align*}
    K=(\log x)^{1/1000}, \enspace K_+=x^{1/100}, \enspace w=0.15 \log x, 
\enspace W=\prod_{p \le w} p^4,
\end{align*}
and
\[
R_k:=\begin{cases}
    x^{1/100k^{50}}, &\quad \text{ if }k \le K,\\
    w, &\quad \text{ if }K <k \le x^{1/100}.
  \end{cases} 
\]
Suppose $k_\ast$ satisfies $2 \le k_\ast \le K_+$. Let $\mathcal{P} \subseteq \{p \le w:p^4 \mid k_\ast\}$, and $h_p \in \mathbb{Z}^+$ for each $p \in \mathcal{P}$. Recall the sieve weight
    \[
\nu(n)=\mathds{1}_{n \in [x,2x]} \mathds{1}_{W \mid n} \prod_{k=1}^K \left( \sum_{\substack{(d,P(w))=1\\d \mid n+k}} \mu(d) \widetilde{\eta} \left(\frac{\log d}{\log R_k}\right) \right)^2.
\]
and let $P_{k_\ast}(d_\ast) := \sum_{n} \nu(n) \mathds{1}_{d_\ast \mid n+k_\ast}$ and $P_{k_\ast}(1) := \sum_n \nu(n)$. We define the random variable $\mathbf{n}$ taking values in $[x,2x]$ by drawing $n$ from $[x,2x]$ with probability density $\nu(n)/P_{k_\ast}(1)$. Then,
    \[
    \mathbb{P}(\nu_p(\mathbf{n}+k_\ast)-\nu_p(k_\ast) \geq h_p \ \forall p \in \mathcal{P})\le 2 \left( \prod_{p \in \mathcal{P}} p^{-h_p}+x^{-0.3} \right).
    \]
\end{lem}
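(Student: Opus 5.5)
Throughout, $\mathbf{n}$ is the random variable of Section~\ref{sec:mainpropsetup}, drawn with density $\nu(n)/P_{k_\ast}(1)$. The plan is to turn the event into a single congruence condition on $\mathbf{n}$ modulo a product of powers of tiny primes. Because the sieve divisors $d_k,d_k'$ are coprime to $P(w)$, this condition is independent of the sieve congruences by the Chinese Remainder Theorem. The ratio to $P_{k_\ast}(1)$ is then essentially an exact local density.

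\textbf{Reduction to a congruence.} For $p\in\mathcal{P}$ write $a_p:=\nu_p(k_\ast)\ge 4$. The event $\nu_p(\mathbf{n}+k_\ast)-a_p\ge h_p$ is exactly $p^{a_p+h_p}\mid \mathbf{n}+k_\ast$. Since $p^4\mid W\mid \mathbf{n}$ and $p^4\mid k_\ast$, this is compatible with $W\mid \mathbf{n}$. Given $p^4\mid n$, it is a single residue class of $n$ modulo $p^{a_p+h_p}$, of relative density $p^{-(a_p+h_p-4)}\le p^{-h_p}$ among multiples of $p^4$.

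\textbf{Keeping the modulus small.} Before counting, I will make the modulus small by monotonicity. If $\prod_{p\in\mathcal{P}}p^{h_p}>x^{0.31}$, I decrease the $h_p$ one unit at a time to exponents $1\le h_p'\le h_p$; the event only gets larger. Each step divides the product by at most $w$, so I can stop with
\[
x^{0.31}/w\le \prod_{p}p^{h_p'}\le x^{0.31}.
\]
It then suffices to prove the bound $(1+o(1))\prod_p p^{-h'_p}+O(x^{-0.35})$, since $\prod_p p^{-h_p'}\le w x^{-0.31}\le x^{-0.3}/2$. (One also needs $\mathcal{P}\neq\emptyset$, which is harmless, and that $a_p\le \log k_\ast/\log 2$.) In the case where no reduction is needed, the same computation applies directly with $h_p$.

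\textbf{Counting.} Put $Q:=\prod_{p\in\mathcal{P}}p^{a_p+h'_p}$ and $W_Q:=\mathrm{lcm}(W,Q)=W\prod_p p^{a_p+h_p'-4}$. The extra factor is at most $k_\ast^{O(1)}\cdot x^{0.31}$; the $k_\ast$-power comes from the $p^{a_p}$, but $p^{a_p}\mid k_\ast$ gives only a factor $\le k_\ast\le x^{0.01}$. Hence
\[
W_Q\,\prod_k [d_k,d_k']\le x^{0.6+0.01+0.31}\prod_k R_k^2\le x^{0.95}.
\]
Expand $\nu(n)\mathds{1}_{Q\mid n+k_\ast}$ exactly as in \eqref{eqn:Pk_def}. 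By CRT, the inner sum over $n$ counts a single residue class modulo $W_Q\,\mathrm{lcm}([d_k,d_k'])$, and so equals
\[
\frac{x}{W_Q\prod_k[d_k,d_k']}+O(1).
\]
The main term is exactly $\frac{W}{W_Q}$ times the corresponding main term for $P_{k_\ast}(1)$, with identical $d$-sums. The $O(1)$ errors total $O(x^{0.03})$, as in \eqref{eqn:initialerror}. Hence
\[
\sum_n\nu(n)\mathds{1}_{Q\mid n+k_\ast}=\frac{W}{W_Q}\bigl(P_{k_\ast}(1)+O(x^{0.03})\bigr)+O(x^{0.03}).
\]

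\textbf{Conclusion and main check.} Dividing by $P_{k_\ast}(1)$ and using \eqref{eqn:probabilitydenominator}, which gives $P_{k_\ast}(1)\gg x^{0.4-o(1)}/\prod_k\log R_k$, yields
\[
\mathbb{P}\le \frac{W}{W_Q}+O(x^{-0.35})\le \prod_p p^{-h_p'}+O(x^{-0.35}).
\]
This gives the claim with room to spare for the factor $2$. The main thing to verify carefully is that $P_{k_\ast}(1)$ dominates the $O(x^{0.03})$ CRT error. Here I need \eqref{eqn:probabilitydenominator}, together with $W\le x^{0.6}$, $c_0\ge 1$, and $\prod_k\log R_k=x^{o(1)}$, and this is exactly why the truncation to modulus $\le x^{0.95}$ was needed.
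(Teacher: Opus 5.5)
Your proof is correct and has the same skeleton as the paper's. You expand $\nu$ and use that the sieve moduli $[d_k,d_k']$ are coprime to $P(w)$. Then $W\mid n$ and $p^{a_p+h_p}\mid n+k_\ast$ (for $p\in\mathcal{P}$, with $a_p=\nu_p(k_\ast)\ge 4$) combine by CRT into a single residue class. The resulting main term is exactly $\prod_{p\in\mathcal{P}}p^{-(a_p+h_p-4)}\le\prod_{p\in\mathcal{P}}p^{-h_p}$ times the main term of $P_{k_\ast}(1)$.

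Where you differ is the error control, and here your version is the more careful one. The paper bounds each inner count from above by $\frac{x}{W\prod_k[d_k,d_k']}\prod_p p^{-h_p}+1$, inside a sum whose coefficients $\prod_k\mu(d_k)\mu(d_k')\widetilde{\eta}(\cdot)\widetilde{\eta}(\cdot)$ take both signs. It then lower-bounds $P_{k_\ast}(1)$ by $\frac12 x^{0.3}$ times the signed sum of those same coefficients. You instead write each count as $x/q'+O(1)$ with $q':=W_Q\prod_k[d_k,d_k']$, and bound the accumulated error by $\prod_k R_k^2\le x^{0.03}$ in absolute value. You absorb it using \eqref{eqn:probabilitydenominator}, which gives $P_{k_\ast}(1)\ge x^{0.4-o(1)}$. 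This is what is actually needed to justify the paper's $x^{-0.3}$ term, and it yields the sharper $\prod_p p^{-h_p}(1+o(1))+O(x^{-0.35})$.

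One correction to your own diagnosis: truncating the $h_p$ so that $\prod_p p^{h_p'}\le x^{0.31}$ is harmless but not needed. The number of $n\in[x,2x]$ in a residue class modulo $q'$ is $x/q'+O(1)$ for every $q'$, however large. The accumulated error $O(\prod_k R_k^2)$ also does not depend on $Q$. So the bound $W_Q\prod_k[d_k,d_k']\le x^{0.95}$ plays no role in $P_{k_\ast}(1)$ dominating the CRT error, contrary to your closing remark. You can run the same count directly with the original $h_p$, as the paper does, and get $\prod_{p}p^{-h_p}+O(x^{-0.35})$.
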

\begin{proof}
    For $p \in \mathcal{P}$, let $e_p=\nu_p(k_\ast)$. Then,
    \begin{align*}
        &\mathbb{P}(\nu_p(\mathbf{n}+k_\ast)-\nu_p(k_\ast) \geq h_p \ \forall p \in \mathcal{P}) P_{k_\ast}(1)\\
        &= \sum_{\substack{x <n \le 2x\\ W \mid n}} \mathds{1}_{\nu_p(n+k_\ast) \ge e_p+h_p\forall p \in \mathcal{P}} \prod_{k=1}^K \left( \sum_{\substack{(d,P(w))=1\\d \mid n+k_\ast}} \mu(d) \widetilde{\eta} \left( \frac{\log d}{\log R_{k}} \right) \right)^2\\
        &=\sum_{\substack{d_1,\ldots,d_K\\ d_1',\ldots,d_K'\\(d_i,P(w))=(d_i',P(w))=1 \, \forall i}} \prod_{k=1}^K \mu(d_k) \mu(d_k') \widetilde{\eta} \left( \frac{\log d_k}{\log R_k} \right) \widetilde{\eta} \left( \frac{\log d_k'}{\log R_k} \right)\sum_{\substack{x<n \le 2x\\ n \equiv 0 \Mod{W}\\ n \equiv -k_\ast \Mod{p^{e_p+h_p}} \, \forall p \in \mathcal{P}\\ n \equiv -k_\ast \Mod{[d_k,d_k']} \, \forall 1 \le k \le K}}1.
    \end{align*}
    Let
    \begin{align*}
        q&= W \operatorname{lcm}([d_1,d_1'],\ldots,[d_K,d_K']),\\
q'&= \operatorname{lcm} \left(W,\prod_{p \in \mathcal{P}} p^{e_p+h_p} \right) \operatorname{lcm}([d_1,d_1'],\ldots,[d_K,d_K'])=q\prod_{p \in \mathcal{P}} p^{e_p+h_p-4}.
    \end{align*}
    Note $q \le x^{0.7}$ and $q' \ge q \prod_{p \in \mathcal{P}} p^{h_p}$ since $e_p \ge 4$ for $p \in \mathcal{P}$. Therefore,
    \begin{align*}
        &\mathbb{P}(\nu_p(\mathbf{n}+k_\ast)-\nu_p(k_\ast) \geq h_p \ \forall p \in \mathcal{P})P_{k_\ast}(1)\\
&\le \sum_{\substack{d_1,\ldots,d_K\\ d_1',\ldots,d_K'\\(d_i,P(w))=(d_i',P(w))=1 \, \forall i}} \prod_{k=1}^K \mu(d_k) \mu(d_k') \widetilde{\eta} \left( \frac{\log d_k}{\log R_k} \right) \widetilde{\eta} \left( \frac{\log d_k'}{\log R_k} \right) \left(\frac{x}{q}\prod_{p \in \mathcal{P}} p^{-h_p} +1 \right)
    \end{align*}
    Moreover, note
    \begin{align*}
        P_{k_\ast}(1)
&\ge \frac12\sum_{\substack{d_1,\ldots,d_K\\ d_1',\ldots,d_K'\\(d_i,P(w))=(d_i',P(w))=1 \, \forall i}} \prod_{k=1}^K \mu(d_k) \mu(d_k') \widetilde{\eta} \left( \frac{\log d_k}{\log R_k} \right) \widetilde{\eta} \left( \frac{\log d_k'}{\log R_k} \right)\frac{x}{q}\\
&\ge \frac12 x^{0.3}\sum_{\substack{d_1,\ldots,d_K\\ d_1',\ldots,d_K'\\(d_i,P(w))=(d_i',P(w))=1 \, \forall i}} \prod_{k=1}^K \mu(d_k) \mu(d_k') \widetilde{\eta} \left( \frac{\log d_k}{\log R_k} \right) \widetilde{\eta} \left( \frac{\log d_k'}{\log R_k} \right).
    \end{align*}
    Therefore,
    \[
    \mathbb{P}(\nu_p(\mathbf{n}+k_\ast)-\nu_p(k_\ast) \geq h_p \ \forall p \in \mathcal{P}) \le 2\left( \prod_{p \in \mathcal{P}} p^{-h_p}+x^{-0.3} \right),
    \]
    as required.
\end{proof}

\section{Conditional Falsity of Erd\H{o}s Problem \#679} \label{sec:conditionalfalsity}
A famous conjecture of \citet{cramer1936order} states that if $p_n$ denotes the $n$-th prime number, then
\[
p_{n+1}-p_n \ll (\log p_n)^2.
\]
This conjecture comes from modelling primes with independent Bernoulli random variables $(X_n)_{n \ge 3}$ with $\mathbb{P}(X_n=1)=1/ \log n$ and the following lemma.
\begin{lem} \label{lem:probabilitylargestgap}
    Let $f:\mathbb{R^+} \to \mathbb{R}$ be a smooth function such that $f(n) \to \infty$ as $n \to \infty$ and $f^{(j)} \ll_j x^{-j}$ for all $j \ge 1$. Let $(X_n)_{n \gg 1}$ be a sequence of independent Bernoulli random variables with $\mathbb{P}(X_n=1)=1/f(n)$. Let $S_1< S_2< \cdots$ be indices such that $X_{S_k}=1$ for all $k \gg 1$. Then, with probability 1
    \[
    \limsup_{k \to \infty} \frac{S_{k+1}-S_k}{f(S_k) \log S_k} \le 1.
    \]
\end{lem}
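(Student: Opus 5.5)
We bound the probability that a long gap starts at a given point and then apply Borel--Cantelli. Fix $\e>0$. For each integer $n$, let $B_n$ be the event that $X_n=1$ and $X_m=0$ for every $n<m\le n+L_n$, where
\[
L_n:=(1+\e)f(n)\log n .
\]
If $\limsup_{k}(S_{k+1}-S_k)/(f(S_k)\log S_k)>1+\e$, then $B_n$ occurs for infinitely many $n$, namely $n=S_k$ along a subsequence. So it suffices to show that
\[
\sum_n \mathbb{P}(B_n)<\infty ;
\]
Borel--Cantelli then gives the claim for each rational $\e$, and a countable intersection finishes the proof.

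By independence,
\[
\mathbb{P}(B_n)=\frac{1}{f(n)}\prod_{n<m\le n+L_n}\left(1-\frac{1}{f(m)}\right)\le \frac{1}{f(n)}\exp\Bigl(-\sum_{n<m\le n+L_n}\frac{1}{f(m)}\Bigr).
\]
The key step is to show that $f$ is essentially constant on the window $[n,n+L_n]$, so that the inner sum is at least $(1-o(1))L_n/f(n)$.

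The derivative hypothesis (reading $x$ as the variable of $f$) gives $f'(t)\ll 1/t$, hence $f(t)\ll \log t$. Therefore $L_n\ll (\log n)^2=o(n)$, and by the mean value theorem
\[
|f(m)-f(n)|\ll \frac{L_n}{n}=o(1) \qquad \text{for } n\le m\le n+L_n .
\]
Since $f(n)\to\infty$, this gives $f(m)=f(n)(1+o(1))$ uniformly on the window. Consequently
\[
\sum_{n<m\le n+L_n}\frac{1}{f(m)}\ge (1-o(1))\frac{L_n}{f(n)}=(1+\e)(1-o(1))\log n ,
\]
which is at least $(1+\e/2)\log n$ for $n$ large. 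Hence
\[
\mathbb{P}(B_n)\le \frac{n^{-1-\e/2}}{f(n)}\le n^{-1-\e/2},
\]
using $f(n)\ge 1$ for large $n$ (needed anyway for $1/f(n)$ to be a probability). The series $\sum_n n^{-1-\e/2}$ converges, so the union-bound estimate above holds.

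The main obstacle is the uniformity of $f$ across the window, which rests entirely on $f'(t)\ll 1/t$. This makes $f$ grow at most logarithmically, so the window length $f(n)\log n$ is $o(n)$ and the variation of $f$ on it is $o(1)$. Without that derivative control, $f$ could drop inside the window and the exponent estimate would fail. Only the first-derivative bound is used; the higher derivative hypotheses are not needed.
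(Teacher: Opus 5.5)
Your proposal is correct and is the Borel--Cantelli argument the paper invokes; the paper's proof is the single line ``this follows from the Borel--Cantelli lemma,'' and you apply the first Borel--Cantelli lemma to the events $B_n$ that a gap longer than $(1+\varepsilon)f(n)\log n$ begins at $n$. You also supply the details the paper omits: $f'(t)\ll 1/t$ gives $f(t)\ll\log t$, so $f$ varies by only $o(1)$ across the window, which yields the summable bound $\mathbb{P}(B_n)\le n^{-1-\varepsilon/2}$.
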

\begin{proof}
    This follows from the Borel-Cantelli lemma.
\end{proof}

For $k \in \mathbb{N}$, let $\pi_k(x)$ to denote the number of integers not greater than $x$ with exactly $k$ distinct prime factors. We quote the following result in \citet[Chapter II.6]{tenenbaum2015introduction}.
\begin{thm} \label{thm:densitysemiprimes}
    Let $A>0$. Uniformly for $x \ge 3$ and $1 \le k \le A \log_2 x$, we have
    \[
    \pi_k(x) \gg_A \frac{x}{ \log x} \frac{(\log_2 x)^{k-1}}{(k-1)!}.
    \]
\end{thm}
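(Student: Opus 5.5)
The plan is the Selberg--Delange method for the generating function of $\omega$, followed by a saddle-point extraction of the coefficient of $z^k$.

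\textbf{Step 1: the generating function.} For complex $z$ with $|z|\le A+1$, the Dirichlet series $\sum_n z^{\omega(n)}n^{-s}$ factors as $\zeta(s)^z G(s;z)$. Here
\[
G(s;z)=\prod_p\Bigl(1+\frac{z}{p^s-1}\Bigr)\Bigl(1-\frac1{p^s}\Bigr)^z
\]
converges absolutely and uniformly for $\Re s>1/2+\delta$, uniformly in $|z|\le A+1$. The Selberg--Delange theorem (Tenenbaum, Chapter II.5) then gives, uniformly for $x\ge3$ and $|z|\le A+1$,
\[
\sum_{n\le x} z^{\omega(n)} = x(\log x)^{z-1}\Bigl(\lambda(z)+O_A\Bigl(\frac1{\log x}\Bigr)\Bigr),\qquad \lambda(z):=\frac{G(1;z)}{\Gamma(z)}.
\]
I take this uniform asymptotic as the black box. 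Proving it means running a Hankel contour around $s=1$ with the classical zero-free region; this is standard and I would not redo it.

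\textbf{Step 2: coefficient extraction.} Since $\pi_k(x)$ is the coefficient of $z^k$, Cauchy's formula gives
\[
\pi_k(x)=\frac1{2\pi i}\oint_{|z|=r}\frac{\sum_{n\le x}z^{\omega(n)}}{z^{k+1}}\,dz.
\]
Write $L=\log_2 x$ and choose the saddle-point radius $r=(k-1)/L\in[0,A]$. Inserting the asymptotic from Step 1 turns the main term into
\[
\frac{x}{\log x}\cdot\frac1{2\pi i}\oint \lambda(z)\,\frac{e^{zL}}{z^{k+1}}\,dz .
\]
Writing $\lambda(z)/z=\lambda(z)/z$ and expanding $\lambda(z)/z=\lambda_1(z)$ around $z=r$, I would use the standard estimate
\[
\frac1{2\pi i}\oint \frac{e^{zL}}{z^{k}}\,h(z)\,dz=\frac{L^{k-1}}{(k-1)!}\Bigl(h(r)+O_A\Bigl(\frac{k}{L^2}\Bigr)\Bigr),
\]
valid for $h$ holomorphic and bounded on $|z|\le A+1$, with $h(z)=\lambda(z)/z$. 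The proof of this estimate is to Taylor expand $h$ at $r$: the linear term vanishes because $r$ is chosen as the saddle point, and the quadratic term is controlled by Cauchy bounds together with the identity $\oint |z-r|^2|e^{zL}z^{-k}|\,|dz| \ll (k/L^2)\,e^{rL}r^{1-k}$. The error term $O(1/\log x)$ from Step 1 contributes at most $x(\log x)^{r-2}r^{-k}$. By Stirling this is $O\bigl((\log x)^{-1}\bigr)$ times the main term, uniformly in $k\le A L$.

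\textbf{Step 3: conclusion.} Combining Steps 1 and 2,
\[
\pi_k(x)=\frac{x}{\log x}\,\frac{L^{k-1}}{(k-1)!}\Bigl(\frac{\lambda(r)}{r}+O_A\Bigl(\frac{k}{L^2}+\frac1{\log x}\Bigr)\Bigr).
\]
Note that $\lambda(r)/r=G(1;r)/\Gamma(r+1)$ is continuous and strictly positive on $[0,A]$: the Euler product has positive factors for real $r\ge0$, and $1/\Gamma(r+1)$ is positive there. Hence it is bounded below by some $c_A>0$. This gives the claimed bound once $L$ is large in terms of $A$. For $x$ in a bounded range, or for $k=1$ (where $\pi_1(x)\ge\pi(x)\gg x/\log x$ by Chebyshev), the bound holds trivially after adjusting the implied constant.

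\textbf{Main obstacle.} The delicate point is uniformity: the saddle-point error term must be $o(1)$ relative to the main term across the whole range $k\le A L$, including $r$ near $0$. When $r$ is small the factor $1/z$ is close to singular, so I would handle small $k$ separately, either by using the circle $|z|=1/L$ or by appealing directly to Landau's asymptotic for fixed $k$. Alternatively, since only a lower bound is needed, one can drop the asymptotic and bound $\Re$ of the contour integral from below, concentrating on the arc $|\arg z|\le L^{-1/2}$, where $\Re\,\lambda(z)/z\ge c_A/2$.
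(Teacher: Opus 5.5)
\textbf{Verdict: the method is right, but the claim that bounded $x$ is trivial is false.} The paper does not prove this statement; it quotes Tenenbaum, Chapter II.6. Your route is the proof of the Sathe--Selberg theorem given there: Selberg--Delange for $\sum_{n\le x}z^{\omega(n)}$, then Cauchy's formula on the circle $|z|=(k-1)/\log_2 x$. For $x\ge x_0(A)$, your Steps 1--3 together with Chebyshev at $k=1$ do give the bound.

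The step that fails is your claim that the range of bounded $x$ is trivial after adjusting the implied constant. Since $\pi_k(x)=0$ whenever $x<p_1p_2\cdots p_k$, and the right-hand side is always positive, no constant can help. Such pairs $(x,k)$ do lie in the stated range once $A$ is moderately large. With $A=4$, $k=8$, $x=10^6$ one has $4\log_2 x\approx 10.5\ge 8$, yet $2\cdot3\cdot5\cdot7\cdot11\cdot13\cdot17\cdot19=9699690>10^6$, so $\pi_8(10^6)=0$; the same happens for $A=4$, $k=2$, $x=5.5$. So the statement, read literally for all $x\ge 3$, is false. What Tenenbaum actually proves is the asymptotic $\pi_k(x)=\frac{x}{\log x}\frac{(\log_2 x)^{k-1}}{(k-1)!}\bigl\{h(r)+O_A\bigl(k/(\log_2 x)^2\bigr)\bigr\}$, with $r=(k-1)/\log_2 x$ and $h(r)=G(1;r)/\Gamma(r+1)$ in your notation. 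Its error term is vacuous for small $x$. You should conclude the lower bound only for $x\ge x_0(A)$, which is all the paper needs: it uses $k=\lceil\log_2 x\rceil$ with $x\to\infty$.

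\textbf{Minor bookkeeping.} Your error estimates are slightly off, but harmlessly.
\begin{itemize}
\item The main term is $\asymp e^{rL}r^{1-k}/\sqrt{k}$. The integral $\oint|z-r|^2|e^{zL}z^{-k}|\,|dz|$ is actually $\asymp\sqrt{k}\,L^{-2}e^{rL}r^{1-k}$, and that is what gives relative error $k/L^2$. Your stated bound $(k/L^2)e^{rL}r^{1-k}$ only yields $k^{3/2}/L^2$.
\item The Selberg--Delange error is $O\bigl(L/(\sqrt{k}\log x)\bigr)$ relative to the main term, not $O(1/\log x)$.
\end{itemize}
Both are $o(1)$ uniformly for $2\le k\le AL$, so the conclusion stands.

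Finally, the obstacle you anticipate at small $r$ does not exist. The function $h(z)=\lambda(z)/z=G(1;z)/\Gamma(z+1)$ is holomorphic at $z=0$, because $1/\Gamma$ vanishes there. The only degenerate case is $k=1$ (radius $0$), which Chebyshev already covers.
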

By putting $k=\lceil \log_2 x \rceil$ into Theorem \ref{thm:densitysemiprimes} and using Stirling's approximation, we see that
\[
\pi_k(x) \gg \frac{x}{\sqrt{\log_2 x}}.
\]
Therefore, using Lemma \ref{lem:probabilitylargestgap} we are motivated to conjecture the following.
\begin{conj} \label{conj:strongaveragenumbers}
    For $\e>0$, let $\mathcal{A} := \{n:\omega(n) \ge\e  \log_2 n\}$ and $\mathcal{B} := \{n:\Omega(n) \ge \e \log_2 n\}$. Then there is a constant $C$ such that for $x \in \mathbb{R}^+$ sufficiently large, we have
    \[
    \mathcal{A} \cap \Big(x-C \log x \sqrt{\log_2 x},x \Big], \enspace \mathcal{B} \cap \Big(x-C \log x \sqrt{\log_2 x},x \Big] \ne \emptyset.
    \]
\end{conj}
In fact, a weaker version of Conjecture \ref{conj:strongaveragenumbers} suffices to disprove Conjecture \ref{conj:erdos679}.
\begin{conj} \label{conj:weakaveragenumbers}
    Let $\mathcal{A} := \{n:\omega(n) \ge C_0\log_2 n/\log_3 n\}$. Then for some $C_0 \ge 1$, there is a constant $1 \le d < C_0$ such that for $x \in \mathbb{R}^+$ sufficiently large, we have
    \[
    \mathcal{A} \cap \Big(x- \left(\log \frac x2 \right)^d,x\Big] \ne \emptyset.
    \]
\end{conj}
We see that Conjecture \ref{conj:weakaveragenumbers} immediately implies the falsity of Conjecture \ref{conj:erdos679}. Indeed, for every $n \in \mathbb{R}^+$ sufficiently large, by Conjecture \ref{conj:weakaveragenumbers} there exists constants $C_0,d$ with $1 \le d < C_0$ such that
\[
\omega(n-k) \ge \frac{C_0\log_2(n-k)}{\log_3(n-k)}
\]
for some $1 \le k \le (\log \frac{n}{2})^d$. Note
\begin{align*}
    \frac{C_0\log_2(n-k)}{\log_3(n-k)} \ge \frac{C_0 \log_2(n/2)}{\log_3(n/2)} \ge \frac{C_0 \log(k^{1/d})}{\log_2(k^{1/d})} \ge \frac{C_0\log k}{d(\log\log k-\log d))} \ge \left( 1+\left( \frac{C_0}{d}-1 \right) \right) \frac{\log k}{\log \log k},
\end{align*}
with $C_0/d-1>0$. Therefore, we disproved Conjecture \ref{conj:erdos679}.
\begin{thm} \label{thm:falseerdos}
    Assume Conjecture \ref{conj:weakaveragenumbers}. Then, there exists $\delta>0$ such that for every sufficiently large $n \in \mathbb{N}$, there exists $1 \ll k<n$ such that $\omega(n-k) > (1+\delta) \log k/\log \log k$.
\end{thm}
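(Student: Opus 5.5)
Assuming Conjecture \ref{conj:weakaveragenumbers}, fix the constants $C_0 \ge 1$ and $1 \le d < C_0$ it supplies, and set $\delta := \frac12\left(\frac{C_0}{d}-1\right) > 0$. Given a large integer $n$, we apply the conjecture with $x = n-1$. This produces an integer $m \in \mathcal{A}$ with $n-1-(\log\frac{n-1}{2})^d < m \le n-1$. We then put $k := n-m$, so that $1 \le k \le 1+(\log\frac{n-1}{2})^d < n$, and by construction
\[
\omega(n-k) = \omega(m) \ge \frac{C_0 \log_2 m}{\log_3 m}.
\]

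The remaining work is to compare this lower bound with $\log k/\log\log k$, following the chain of inequalities displayed just before the statement. Since $m \ge n/2$ for $n$ large and $t \mapsto \log_2 t/\log_3 t$ is increasing for large $t$, we obtain $\omega(m) \ge C_0\log_2(n/2)/\log_3(n/2)$. From $k \le 1+(\log\frac n2)^d$ we get $\log\frac n2 \ge (k-1)^{1/d}$. Using again that $y \mapsto \log y/\log_2 y$ is increasing for $y \ge e^e$, this gives
\[
\omega(n-k) \ge \frac{C_0 \cdot \frac1d\log(k-1)}{\log_2(k-1) - \log d}.
\]
Once $k$ is large, this is at least $\frac{C_0}{d}(1-o(1))\log k/\log\log k$, which exceeds $(1+\delta)\log k/\log\log k$.

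The main obstacle is the requirement $k \gg 1$: the bound above is only useful once $k$ exceeds some fixed threshold $k_0$, and the conjecture gives no lower bound on $k$. I would handle this by shifting the window. I apply Conjecture \ref{conj:weakaveragenumbers} with $x = n-k_0$ instead of $x = n-1$, which forces $k = n-m \ge k_0$. At the same time $k \le k_0+(\log\frac{n}{2})^d \le 2(\log\frac n2)^d$ for large $n$, so $\log\frac n2 \ge (k/2)^{1/d}$. The same monotonicity argument then goes through with $k/2$ in place of $k-1$, at the cost only of constants absorbed into the $o(1)$. Choosing $k_0$ large enough in terms of $C_0$ and $d$ makes the final inequality strict with margin $\delta$, and $k_0 \le k < n$ holds for all sufficiently large $n$.
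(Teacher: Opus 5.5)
Your proposal is correct and follows the paper's argument: apply Conjecture \ref{conj:weakaveragenumbers} in a window just below $n$, set $k=n-m$, and compare $\log_2(n/2)/\log_3(n/2)$ with $\frac{1}{d}\log k/(\log\log k-\log d)$ using the monotonicity of $y\mapsto \log y/\log\log y$. Your shift to $x=n-k_0$ is a worthwhile refinement, because it gives the lower bound $k\ge k_0$, which keeps $k^{1/d}$ in the range where that monotonicity holds and justifies the condition $1\ll k$ in the statement; the paper's chain of inequalities uses this bound without ever securing it.
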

Using an analogous argument, Conjecture \ref{conj:strongaveragenumbers} proves Conjecture \ref{conj:newerdos679}.

\vfill
\setcitestyle{numbers}
\bibliographystyle{apalike}
\bibliography{zotero,bibliography}
\end{document}